\documentclass[11pt,a4paper]{article}

\usepackage[T1]{fontenc}
\usepackage{lmodern}
\usepackage[margin=2.25cm]{geometry}
\usepackage{amsmath,amssymb,amsthm,mathtools}
\usepackage{mathrsfs}
\usepackage{microtype}
\usepackage{enumitem}
\usepackage[hidelinks,pdfusetitle]{hyperref}
\usepackage[msc-links,lite]{amsrefs}

\newtheorem{theorem}{Theorem}[section]
\newtheorem{proposition}[theorem]{Proposition}
\newtheorem{lemma}[theorem]{Lemma}
\newtheorem{corollary}[theorem]{Corollary}

\newcommand{\Q}{\mathbb Q}
\newcommand{\Z}{\mathbb Z}
\newcommand{\R}{\mathbb R}
\newcommand{\C}{\mathbb C}
\newcommand{\A}{\mathbb A}
\newcommand{\Pj}{\mathbb P}
\newcommand{\cO}{\mathcal O}
\newcommand{\Kc}{\mathcal K}
\newcommand{\cG}{\mathcal G}
\newcommand{\cX}{\mathscr X}
\newcommand{\cY}{\mathcal Y}
\newcommand{\GL}{\operatorname{GL}}
\newcommand{\SL}{\operatorname{SL}}
\newcommand{\Gr}{\operatorname{Gr}}
\newcommand{\vol}{\operatorname{vol}}
\newcommand{\covol}{\operatorname{covol}}

\newcommand{\Addresses}{{%
  \bigskip
  \footnotesize
  Ruida Di, \textsc{Morningside Center of Mathematics,
  Academy of Mathematics and Systems Science, Chinese Academy of Sciences,
  No. 55, Zhongguancun East Road, Beijing 100190, China.}\par\nopagebreak
  \textit{E-mail address}, \texttt{drdmath@amss.ac.cn}
}}

\title{Joint Equidistribution of Subspaces of Bounded Height in Rigid Adelic Spaces}
\author{Ruida Di}
\date{}

\begin{document}
\maketitle

\begin{abstract}
For every $1\le d<n$, we prove joint equidistribution, as the height tends to
infinity, of $d$-dimensional $K$-subspaces in a rigid adelic space over a
number field $K$, together with their archimedean Grassmannian images and the
$K$-linear isometry classes of the normalized subspace and quotient.  We
identify the leading constant, prove no escape of mass from either normalized
factor, and obtain convergence against bounded continuous test
functions.  Over a general number field, the two normalized shapes are coupled
by a determinant-class relation; conditional on the determinant class, their
limiting law is the product of the Haar-induced probability measures on the
corresponding fibers.
\end{abstract}

\medskip
\noindent\textit{2020 Mathematics Subject Classification.} Primary 11G50; Secondary 14G05, 14G40, 37A25.

\smallskip
\noindent\textit{Keywords.} Heights, rational subspaces, equidistribution, rigid adelic spaces, homogeneous dynamics, Grassmannians.

\tableofcontents

\section{Introduction}

The goal of this paper is to study the distribution of rational subspaces of
bounded height over number fields.  We keep track not only of the height of a
subspace, but also of its archimedean position and of the adelic shapes of the
subspace and its quotient.

The counting problem itself is classical.  For rational points in projective
space over a number field, the asymptotic goes back to Schanuel
\cite{Schanuel1979}.  Over $\Q$, Schmidt obtained asymptotic formulas for
rational subspaces of bounded height \cite{Schmidt1968}.  Thunder extended this
to algebraic subspaces over arbitrary number fields and obtained an explicit
main term with an error estimate \cite{Thunder1992}; he later treated rational
points of bounded height on flag varieties \cite{Thunder1993}.  Schmidt also
studied the distribution of sublattices of $\Z^m$, including their position and
shape \cite{Schmidt1998}.

More recent results retain several geometric parameters simultaneously.
Horesh and Karasik proved joint equidistribution for primitive lattices in
$\R^n$, their Grassmannian projections and shapes, together with the
corresponding data for their orthogonal complements \cite{HoreshKarasik}.
Aka, Musso and Wieser proved simultaneous equidistribution of rational
subspaces and two associated lattice shapes in fixed-discriminant families,
under congruence conditions \cite{AkaMussoWieser}.  We consider the analogous problem over a number field $K$.  Let $E$ be a rigid
adelic space over $K$, and let $W\subset E$ be a $K$-subspace.  In addition
to its height, we record the position of $W$ at the archimedean places and the
height-one normalizations of $W$ and $E/W$.  Over a general number field,
the normalized subspace and quotient are
not independent.  Their determinant classes satisfy the product relation
\begin{equation}\label{eq:s1-d001-v33}
 \det E\simeq \det W\otimes\det(E/W).
\end{equation}
The limiting measure is supported on this relation.  Conditional on the
determinant class, the two normalized shapes are distributed according to the
product of the Haar-induced probability measures on the corresponding fibers.
Each marginal is the Haar-induced probability measure on its shape space.  Over
$\Q$, the determinant-class constraint is trivial.

\subsection{Brief account of our work}

In this paper, we prove a joint equidistribution theorem for rational subspaces
of bounded height in an arbitrary rigid adelic space over a number field.  We
also determine the limiting measure and the leading constant.  We obtain the following results.

\begin{enumerate}[label=\textup{(\arabic*)}]
\item We prove joint vague equidistribution of the height, the archimedean
Grassmannian component, and the normalized adelic shapes of the subspace and
quotient.  The limiting law records the determinant-class relation described
above.  We also identify the leading constant; see
Theorem~\ref{thm:general-equidistribution}.

\item The shape spaces are noncompact, so equidistribution on relatively compact
subsets does not by itself control the cusps.  We prove estimates showing that
mass cannot escape through either the normalized subspace or the normalized
quotient; see Theorem~\ref{thm:no-escape-global-v2}.  This yields
convergence against bounded continuous functions and, in particular, the
bounded-height asymptotic; see Theorem~\ref{thm:cumulative-total-v2}.

\item As applications, we recover the leading term in Thunder's
subspace-counting theorem from the equidistribution argument, and we count
subspaces generated by bounded-height projective points.  The latter constant
is a Haar moment of the last Roy--Thunder minimum; see
Section~\ref{sec:applications}.
\end{enumerate}

\subsection{Main results}

The definitions, normalization conventions, limiting measures, and
arithmetic constants used below are given in
Section~\ref{subsec:parameter-spaces}.

Let $K$ be a number field of degree $\kappa=[K:\Q]$, let $E$ be an
$n$-dimensional rigid adelic space over $K$, and fix $1\le d<n$.  Put
$e=n-d$.  Write $H(E)$ for the height of $E$ and set
$\Gr_{d,\infty}(E)=\prod_{v\mid\infty}\Gr_d(E\otimes_KK_v)$.  For a
$d$-dimensional $K$-subspace $W\subset E$, let $H_E(W)$ be its height and let
\begin{equation}\label{eq:s1-d002-v33}
 W_\infty=(W\otimes_KK_v)_{v\mid\infty}
 \in \Gr_{d,\infty}(E)
\end{equation}
be its archimedean Grassmannian component.  We write $\widehat W$ and
$\widehat{E/W}$ for the normalizations obtained by rescaling the
archimedean norms so that both adelic spaces have height one.
For $m\ge1$, let $\cX_{K,m}$ be the space of $K$-linear isometry classes of
$m$-dimensional rigid adelic spaces of height one.  The determinant relation
forces the pair $([\widehat W],[\widehat{E/W}])$ to lie in the closed subspace
\begin{equation}\label{eq:s1-d003-v33}
 \cY_{E;d,e}\subset \cX_{K,d}\times\cX_{K,e}.
\end{equation}
We write $\nu_{E,d}$ for the probability measure on $\cY_{E;d,e}$ defined in
Section~\ref{subsec:parameter-spaces}, and $\sigma_{E,d}$ for the invariant
probability measure on
$\Gr_{d,\infty}(E)$.  The constant $\mathfrak a_{K,n,d}$ is defined in
Section~\ref{subsec:parameter-spaces}.

For a point $x$ in any of the parameter spaces below, we denote by $\delta_x$
the Dirac probability measure concentrated at $x$.

\begin{theorem}\label{thm:general-equidistribution}
Let $E$ be an $n$-dimensional rigid adelic space over $K$.  Let $r$ denote
the coordinate on $(0,\infty)$ and $dr$ Lebesgue measure.  We write
$\xrightarrow{\mathrm v}$ for vague convergence.  As $Y\to\infty$,
\begin{equation}\label{eq:general-equidistribution}
 Y^{-\kappa n}
 \sum_{\substack{W\subset E\\ \dim_KW=d}}
 \delta_{\left(H_E(W)/Y,\,W_\infty,\,([\widehat W],[\widehat{E/W}])\right)}
 \xrightarrow{\mathrm v}
 \frac{\mathfrak a_{K,n,d}}{H(E)^{\kappa d}}
 \kappa n\,r^{\kappa n-1}\,dr\,d\sigma_{E,d}\,d\nu_{E,d}
\end{equation}
on $(0,\infty)\times\Gr_{d,\infty}(E)\times\cY_{E;d,e}$.  This means
convergence against functions in
$C_c\bigl((0,\infty)\times\Gr_{d,\infty}(E)\times\cY_{E;d,e}\bigr)$.
\end{theorem}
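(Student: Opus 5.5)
The plan is to realize the set of $d$-dimensional $K$-subspaces of $E$ as an orbit of the adelic group on a homogeneous space, and to deduce the counting statement from equidistribution of translated periodic orbits (the Eskin--McMullen / Duke--Rudnick--Sarnak method), arranged so that the shape coordinates appear as the fibre variables.

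\textbf{Step 1: reduction to a standard space.} Choose an adelic isometry identifying $E$ with a rescaling of a standard rigid adelic space $K^n_{\A}$. Let $G=\GL_n(\A_K)$, $\Gamma=\GL_n(K)$, and let $P\subset\GL_n$ be the parabolic stabilizing $K^d\subset K^n$. Then $d$-subspaces correspond to $\Gamma/P(K)$, and the map $\gamma\mapsto\gamma K^d$ is a bijection. For $W=\gamma K^d$, the three data $H_E(W)$, $W_\infty$ and $([\widehat W],[\widehat{E/W}])$ are functions of the coset $g\gamma P(K)$, where $g\in G$ encodes the metric of $E$:
\begin{itemize}[label=--]
\item write $g\gamma=kp$ with $k$ in the maximal compact subgroup and $p\in P(\A)$ (Iwasawa decomposition);
\item the Levi part of $p$ lies in $\GL_d(\A)\times\GL_e(\A)$ and gives the two blocks;
\item $H_E(W)$ is $|\det|$ of the $\GL_d$ block;
\item $W_\infty$ is read off from $k_\infty$;
\item $[\widehat W]$ and $[\widehat{E/W}]$ are the classes of the two blocks modulo $\GL_d(K)\times\GL_e(K)$ and the compact subgroups, after normalizing the determinant.
\end{itemize}
The joint determinant class of the two blocks is fixed by $\det E$. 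This explains why the pair lands in $\cY_{E;d,e}$, which is a quotient of the fibre of $G^1$ over the fixed idele class.

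\textbf{Step 2: unfolding.} Given $f\in C_c((0,\infty)\times\Gr_{d,\infty}(E)\times\cY_{E;d,e})$, I will take $f$ a product of factors (Stone--Weierstrass), with the $r$-factor smooth. Form the incomplete Eisenstein-type series
\[
F_Y(h)=\sum_{\gamma\in\Gamma/P(K)}f\bigl(H(h\gamma)/Y,\,(h\gamma)_\infty,\,\mathrm{shapes}(h\gamma)\bigr)
\]
on $\Gamma\backslash G$, suitably restricted to the fixed-determinant fibre. The quantity to compute is $Y^{-\kappa n}F_Y(g)$.

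\begin{itemize}[label=--]
\item \emph{Mean.} Integrate $F_Y$ against a bump $\psi$ near $g$ and unfold to $P(K)\backslash G$. Then use $G=KP$, so that $P(K)\backslash P(\A)$ factors through $N(K)\backslash N(\A)\times$ Levi. Integrating out the unipotent radical gives its covolume. Integrating the Levi block in $\GL_d\times\GL_e$ over the height parameter gives $\int f(r,\cdot)\,\kappa n\,r^{\kappa n-1}dr$: the modular character of $P$ is $|\det A|^{n}$ in the $\GL_d$-block, and this is what produces the exponent $\kappa n$. Integrating the remaining Levi classes over $\GL_d(K)\backslash\GL_d^1(\A)\times\GL_e(K)\backslash\GL_e^1(\A)$, over the determinant fibre, produces $\nu_{E,d}$. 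The constant $\mathfrak a_{K,n,d}/H(E)^{\kappa d}$ assembles from these covolumes and the normalization $H(E)$ of $g$, and I will match it against the definition in Section~\ref{subsec:parameter-spaces}.
\item \emph{Pointwise limit.} To pass from $\langle F_Y,\psi\rangle$ to the value $F_Y(g)$, I use a wellroundedness argument. By continuity of $f$ and uniform continuity of the height, $F_Y(hg)$ is squeezed between sums for slightly perturbed test functions $f^\pm$ when $h$ is near the identity. Letting the bump shrink then gives the limit.
\end{itemize}

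\textbf{Main obstacle.} The obstacle is justifying the limit of $\langle F_Y,\psi\rangle$, that is, the equidistribution of the expanding translates
\[
P(K)\backslash P(\A)\,a_Y\quad\text{in}\quad\Gamma\backslash G^1
\]
jointly with the Levi shape coordinates, where $a_Y$ scales the $\GL_d$-block. If one instead proceeds by direct unfolding as above, no dynamics is needed for the mean. The real issue is then interchanging the limit with the compactly supported shape cutoff; since $f$ has compact support in the shape variables, this is handled by dominated convergence. I will first try to make the unfolding exact, as in the Schanuel-type proofs via Eisenstein series, so that only the pointwise-versus-average step remains. For that step I will rely on mixing of the $a_Y$-action on $\Gamma\backslash G^1$, using the fact that the Levi blocks are unaffected by $a_Y$ up to the scalar.
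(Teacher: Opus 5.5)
The key step, the limit of $\langle F_Y,\psi\rangle$, is only gestured at, and the two justifications you give for it do not work as stated.

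\textbf{The dynamics cannot be bypassed.} Take $\psi$ localized near $g$. Unfold over $\Gamma/P(K)$, use $G^1=\Kc\,a_d(\R)\,P(\A_K)^1$ with $\Kc$ the maximal compact subgroup, and substitute $t=\log(rY)$. Up to normalizing constants this gives
\[
 Y^{-\kappa n}\langle F_Y,\psi\rangle
 =\int_{\Kc}\int_0^\infty r^{\kappa n-1}\int_{P(\A_K)^1/P(K)}
 f\bigl(r,k,\mathrm{sh}(p)\bigr)\,\psi\bigl(k\,a_d(\log rY)\,p\bigr)\,dp\,dr\,dk .
\]
Convergence of the inner integral to $\bigl(\int f\,dp\bigr)\bigl(\int\psi\bigr)$ is exactly the shape-weighted equidistribution of the expanding translates $k\,a_d(t)P(\A_K)^1\Gamma$. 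Dominated convergence only lets you integrate over $(r,k)$ afterwards. If you take $\psi\equiv1$, the unfolding becomes exact, but then the pointwise step \emph{is} that equidistribution.

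\textbf{The mixing you invoke fails on the space you name.} Since $\det a_d(t)=1$, every function of $[\det g]\in\A_K^1/K^\times$ is $a_d(t)$-invariant on $\GL_n(\A_K)^1/\GL_n(K)$. As $\A_K^1/K^\times$ is infinite (already $\widehat\Z^\times$ for $K=\Q$), the action is not even ergodic there.

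\textbf{How to repair it.} Work on the $\SL_n(\A_K)$-orbits of fixed determinant class. There, strong approximation plus Howe--Moore do give mixing; this is why the paper uses $\mathbf G=\SL_n$ and conjugates by $b=r_n(\det g_E)$. Then prove the translate equidistribution, for example by Margulis' thickening:
\begin{itemize}
\item $\mathbf N_d$ is the full expanding horospherical subgroup of $a_d(t)$;
\item $a_d(t)$ centralizes $\mathbf M_d$, so it does not change the shape coordinate (your remark about the Levi blocks is the right observation);
\item thickening the compact piece of the periodic orbit cut out by $\operatorname{supp} f$ in the transverse directions $\mathbf N_d^-$ (contracted) and $a_d(\R)$ (commuting) is controlled by uniform continuity of $f$, uniformly in $k\in\Kc$.
\end{itemize}

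\textbf{The constant is not derived.} The statement asserts the explicit constant $\mathfrak a_{K,n,d}$, and ``I will match it'' is not a derivation. You need:
\begin{itemize}
\item compatible Haar normalizations (Tamagawa measures, with convergence factors if you stay with $\GL_n$);
\item the local parabolic Iwasawa constants: ratios of $\prod_j(1-q_v^{-j})$ at finite places, giving $\mathcal Z_{K,n,d}^{-1}$, and $\binom nd\mathcal V_{n,d}$ or $\binom nd\mathcal V^{(2)}_{n,d}$ at real or complex places;
\item the factor $\Delta_K^{-(de+1)}$ from the gauge forms;
\item $\vol(\A_K^1/K^\times)=\kappa h_KR_K/w_K$;
\item the disintegration over $\overline C_K^1$ that identifies the Levi pushforward with $\nu_{E,d}$, including the twist by $c_E$.
\end{itemize}
This is Appendix~\ref{app:gauge-form} with Proposition~\ref{prop:parabolic-mass-W-v4}.

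\textbf{Comparison with the paper.} With these pieces in place, your route is a genuine alternative. The paper never needs translate equidistribution. It reduces the finite adeles, via strong approximation and Borel finiteness, to finitely many lattices $\Gamma_j\subset\prod_{v\mid\infty}\SL_n(K_v)$. It builds a fundamental domain for $P_{d,\infty}^1/\Gamma_{P,j}$ into the sectors $B_{j,Y}$, so each subspace is exactly one point of $\Gamma_j\cap B_{j,Y}$. It then counts lattice points in these well-rounded sets by Eskin--McMullen smoothing and uniform Howe--Moore decay. Your unfolding avoids the sector and chart bookkeeping, but pays for it with the weighted horospherical equidistribution above.

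(Minor: $E$ is not in general isometric to a rescaling of the standard $K^n$. Encoding its metric by $g\in\GL_n(\A_K)$, as you then do, is the correct formulation.)
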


The convergence in Theorem~\ref{thm:general-equidistribution} is vague because
the shape space is noncompact.  We therefore need to control its cusps.  Recall
that $e=n-d$, that $\widehat F$ denotes the height-one normalization of a
rigid adelic space $F$, and that $\Lambda_i(F)$ denotes its $i$th
Roy--Thunder successive minimum.

\begin{theorem}
\label{thm:no-escape-global-v2}
Let $E$ be an $n$-dimensional rigid adelic space over $K$.  Then
\begin{align}
 &\lim_{R\to\infty}\limsup_{Y\to\infty}Y^{-\kappa n}
 \#\left\{W\subset E:
 \begin{array}{l}
 \dim_KW=d,\ H_E(W)\le Y,\\
 \Lambda_d(\widehat W)>R\ \text{or}\
 \Lambda_e(\widehat{E/W})>R
 \end{array}\right\}=0.
 \label{eq:no-loss-general-v2}
\end{align}
\end{theorem}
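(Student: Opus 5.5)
By the union bound it suffices to treat the two cusps separately. Moreover, the quotient condition reduces to the subspace condition under duality. Passing to the dual rigid adelic space $E^\vee$ sends $W$ to $W^\perp=(E/W)^\vee$, of dimension $e$. Heights are preserved: $H_{E^\vee}(W^\perp)=H_E(W)/H(E)$ up to the standard normalization. By Roy--Thunder duality for successive minima, $\Lambda_e(\widehat{E/W})$ is comparable to $\Lambda_1(\widehat{W^\perp})^{-1}$. It is therefore enough to prove two bounds for every $n$-dimensional $E$ and every $1\le d<n$. The first is
\[
 \lim_{R\to\infty}\limsup_{Y\to\infty}Y^{-\kappa n}\#\{W:\dim W=d,\ H_E(W)\le Y,\ \Lambda_d(\widehat W)>R\}=0 .
\]
The second is the analogous statement with $\Lambda_1(\widehat W)<1/R$ in place of $\Lambda_d(\widehat W)>R$. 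These two are equivalent via the Minkowski-type relation $\prod_i\Lambda_i(\widehat W)\asymp 1$. Indeed, if $\Lambda_d(\widehat W)>R$, then some consecutive ratio $\Lambda_{j+1}/\Lambda_j$ exceeds $R^{c}$ for some $j<d$, with $c=c(d)>0$. So I would treat each splitting index $j$ separately.

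Fix $j$ with $1\le j<d$ and suppose $\Lambda_{j+1}(\widehat W)/\Lambda_j(\widehat W)>R^{c}$. The span $U\subset W$ of the first $j$ minima is then a canonical $j$-dimensional $K$-subspace. Its height is much smaller than expected: up to constants depending only on $K,n$,
\[
 H(U)\asymp H(W)^{j/d}\prod_{i\le j}\Lambda_i(\widehat W), \qquad H(W/U)\asymp H(W)^{(d-j)/d}\prod_{i>j}\Lambda_i(\widehat W),
\]
and $H(W)\asymp H(U)H(W/U)$. Write $A=H(U)$ and $B=H(W/U)$, so that $AB\ll Y$. The gap condition forces $A^{1/j}\le R^{-c'}B^{1/(d-j)}$. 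Hence $W\mapsto U$ is at most one-to-one once the flag $U\subset W$ is recorded, and it suffices to count flags $U\subset W$ with these height constraints. I count them in two stages. First count $U\subset E$ of dimension $j$ with $H(U)\approx A$; by the upper bound of Thunder type (or the crude form of Theorem~\ref{thm:general-equidistribution} via Schmidt/Thunder upper estimates) there are $\ll A^{\kappa n}$ of these. Then count $(d-j)$-dimensional subspaces $W/U$ of the $(n-j)$-dimensional space $E/U$ with height $\le B$. Here I need a \emph{uniform} upper bound of the form
\[
 \#\{V\subset F:\dim V=m,\ H_F(V)\le B\}\ll_{K,n} \frac{B^{\kappa\dim F}}{H(F)^{\kappa m}}\,\bigl(1+\text{lower-order terms controlled by }\Lambda_1(\widehat F)^{-1}\bigr)
\]
for all rigid adelic $F$. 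This should follow by counting wedge products $v_1\wedge\dots\wedge v_m$ in $\bigwedge^m F$ using a Minkowski-reduced basis, as in Thunder/Schmidt.

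Inserting $H(E/U)\asymp H(E)/A$ and summing dyadically over $A$ with $B\le Y/A$ gives a total bounded by $\sum_A A^{\kappa n}(Y/A)^{\kappa(n-j)}A^{\kappa(d-j)}\cdot(\text{gap weight})$. The exponent of $Y$ is exactly $\kappa n$ only at the balanced point $A^{1/j}\approx B^{1/(d-j)}$. The gap condition pushes $A$ below that point by a factor $R^{-c''}$, which produces a saving $R^{-\delta}$ for some $\delta>0$. This vanishes as $R\to\infty$ uniformly in $Y$. The main obstacle is the uniform counting lemma in $E/U$: the quotient $E/U$ itself may be very skewed, and the lower-order terms depend on its minima. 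For these I would apply induction on $n$, using the fact that the first $j$ minima were peeled off. The remaining uniformity in $\Lambda_1(\widehat{E/U})$ should be absorbed by the fact that the sum of $\Lambda$-dependent error terms over $U$ is again of strictly smaller order in $Y$. If that fails, I would fall back on averaging over $\SL$-translates, using the Siegel-type mean value identity implicit in Theorem~\ref{thm:general-equidistribution}.
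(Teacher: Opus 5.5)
The gap is where you yourself put the main obstacle: the uniform count in the varying quotient $E/U$. The remedies you sketch do not close it.

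For a splitting index $j<d-1$ you need an upper bound for $(d-j)$-dimensional subspaces of $E/U$. It must have main term $B^{\kappa(n-j)}H(E/U)^{-\kappa(d-j)}$ and error terms that stay summable over $U$. Counting lattice points of $\bigwedge^{d-j}(E/U)$ in a ball gives the wrong exponent in general, because it ignores decomposability. So this is a separate theorem, and you have not supplied it.

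Even for lines, controlling the lower-order terms through $\Lambda_1(\widehat{E/U})$ is too weak. By Proposition~\ref{prop:projective-bound}(i), the line count in $F=E/U$ is $\ll\sum_i B^{\kappa i}H_F(F_i)^{-\kappa}$. You therefore need lower bounds for every intermediate height $H_F(F_i)$, uniformly in $U$, and $\Lambda_1$ alone only gives $H_F(F_i)\gg H_E(U)^{-i}$. Take $d=2$, $n=5$, and $H_E(U)$ up to $\asymp T^{1/2}$. The $i=3$ term is then only $\ll T^{3\kappa}$ for each of the $\asymp T^{5\kappa/2}$ lines $U$, a total of $T^{11\kappa/2}$, which is not $o(T^{5\kappa})$. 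Induction on $n$ "using that the first minima were peeled off" does not help either. Those are minima of $W$, and they say nothing about the intermediate subspaces of $E/U$.

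Two observations from the paper fill the gap.

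First, no gap decomposition is needed: always take $j=d-1$. Your formula $H(U)\asymp H(W)^{(d-1)/d}\prod_{i<d}\Lambda_i(\widehat W)$, together with $\prod_{i\le d}\Lambda_i(\widehat W)\asymp1$, gives the following. Whenever $\Lambda_d(\widehat W)>R$, the span $U$ of the first $d-1$ minima satisfies $H_E(U)\ll R^{-1}H_E(W)^{(d-1)/d}$, with no gap hypothesis. The paper obtains such a hyperplane as $\ker\varphi$ for a small dual vector (Lemma~\ref{lem:adapted-hyperplane}). So only lines in $E/U$ need counting.

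Second, every proper nonzero $V\subset E/U$ has the form $\widetilde V/U$ with $\widetilde V$ a proper nonzero subspace of $E$. Hence $H_{E/U}(V)=H_E(\widetilde V)/H_E(U)\ge b_E/H_E(U)$ by \eqref{eq:beta-E}. This bounds the $i$-th term of the line count by $T^{\kappa i}H_E(U)^{-\kappa(i-1)}$. Summing over $U$ with \eqref{eq:grassmann-upper} and partial summation gives \eqref{eq:scaled-flag-bound} with $u\asymp R^{-1}$.

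Your duality reduction for the quotient tail is fine and matches the paper's.

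Your one-line fallback can also be made rigorous, but not through Theorem~\ref{thm:general-equidistribution}, which is only vague convergence. Bound the count for fixed $E$ by its average over left translates by $U_\varepsilon$ in $G/\Gamma_j$; heights and minima change only by factors $e^{O(\varepsilon)}$. Then unfold to $G/\Gamma_{P,j}$ and apply \eqref{eq:full-sector-quotient-v4}, which holds for non-compact $\Omega$. This bounds the $\limsup$ by $O_\varepsilon\bigl(\eta_E(\{\Lambda_d>R'\text{ or }\Lambda_e>R'\})\bigr)$ with $R'=Re^{-O(\varepsilon)}$, which tends to $0$ as $R\to\infty$.
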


Combining Theorems~\ref{thm:general-equidistribution} and
\ref{thm:no-escape-global-v2} gives convergence against bounded continuous
functions.  Recall that
$\sigma_{E,d}$ and $\nu_{E,d}$ are the probability measures on
$\Gr_{d,\infty}(E)$ and $\cY_{E;d,e}$, respectively, and that
$\mathfrak a_{K,n,d}$ is the constant introduced above.

\begin{theorem}
\label{thm:cumulative-total-v2}
Let $E$ be an $n$-dimensional rigid adelic space over $K$.  As $Y\to\infty$,
\begin{align}
 &Y^{-\kappa n}\!\!\sum_{\substack{W\subset E,\ \dim_KW=d\\H_E(W)\le Y}}
 \delta_{\left(W_\infty,([\widehat W],[\widehat{E/W}])\right)}
 \longrightarrow
 \frac{\mathfrak a_{K,n,d}}{H(E)^{\kappa d}}
 \sigma_{E,d}\otimes\nu_{E,d},
 \label{eq:cumulative-global-v2}
\end{align}
where the convergence is against bounded continuous functions on
$\Gr_{d,\infty}(E)\times\cY_{E;d,e}$.  In particular,
\begin{equation}\label{eq:total-subspace-count-v2}
 \#\{W\subset E:\dim_KW=d,\ H_E(W)\le Y\}
 \sim
 \frac{\mathfrak a_{K,n,d}}{H(E)^{\kappa d}}Y^{\kappa n}.
\end{equation}
\end{theorem}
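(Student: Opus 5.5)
We deduce Theorem~\ref{thm:cumulative-total-v2} from Theorems~\ref{thm:general-equidistribution} and~\ref{thm:no-escape-global-v2} by a standard tightness argument. There are two truncations to handle: one in the height variable $r=H_E(W)/Y\in(0,1]$, and one in the cusp of $\cY_{E;d,e}$.

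Fix a bounded continuous $f$ on $\Gr_{d,\infty}(E)\times\cY_{E;d,e}$ with $|f|\le M$. Write $\mu_Y$ for the measure on the left of \eqref{eq:general-equidistribution} and $\mu$ for its limit. We must show
\[
\int \mathbf 1_{(0,1]}(r)\,f\,d\mu_Y\to\int\mathbf 1_{(0,1]}(r)\,f\,d\mu .
\]
For $R>0$ let $\Omega_R\subset\cY_{E;d,e}$ be the set where $\Lambda_d(\widehat W)\le R$ and $\Lambda_e(\widehat{E/W})\le R$. By the Mahler-type compactness criterion for rigid adelic spaces (Roy--Thunder minima, presumably recorded in Section~\ref{subsec:parameter-spaces}), $\Omega_R$ is compact. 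It is closed in $\cY_{E;d,e}$ because the constraint $\cY_{E;d,e}$ is closed. Choose $\chi_R\in C_c(\cY_{E;d,e})$ with $0\le\chi_R\le1$, $\chi_R=1$ on $\Omega_R$, and support in the interior of $\Omega_{2R}$; this is possible since the $\Lambda_i$ are continuous. Likewise, for $0<\epsilon<1/2$ choose $\phi_\epsilon\in C_c((0,\infty))$ with $0\le\phi_\epsilon\le1$ that agrees with $\mathbf 1_{(0,1]}$ outside $(0,\epsilon)\cup(1-\epsilon,1+\epsilon)$. Since $\Gr_{d,\infty}(E)$ is compact, $g=\phi_\epsilon(r)\chi_R f$ lies in $C_c$. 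Theorem~\ref{thm:general-equidistribution} therefore gives $\int g\,d\mu_Y\to\int g\,d\mu$.

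The error splits into three pieces.

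(i) The cusp part. Its contribution is bounded by $M\,Y^{-\kappa n}\#\{W: H_E(W)\le (1+\epsilon)Y,\ (\widehat W,\widehat{E/W})\notin\Omega_R\}$. Theorem~\ref{thm:no-escape-global-v2}, applied with $Y$ replaced by $(1+\epsilon)Y$, makes its $\limsup_Y$ tend to $0$ as $R\to\infty$.

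(ii) Small and near-boundary heights with bounded shape. This is bounded by $M\mu_Y(\psi\otimes\chi_{R})$ for some $\psi\in C_c((0,\infty))$ with $0\le\psi\le 1$, supported near $1$, plus the part with $r<\epsilon$. Near $r=1$, vague convergence gives a limit $\le M\mu(\psi\otimes 1)=O(M\epsilon)$, because the density $\kappa n r^{\kappa n-1}$ has no atoms. For $r<\epsilon$ we use monotonicity instead. The count with $H_E(W)\le\epsilon Y$ and shape in $\Omega_{2R}$ is a count at scale $\epsilon Y$. It is therefore bounded by $\epsilon^{\kappa n}$ times the $\limsup$ at scale $\epsilon Y$ of the normalized mass of $\{r\le1\}\times\Omega_{2R}$. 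That normalized mass is finite: it is controlled by vague convergence against a compactly supported majorant in $r\in[1/2,1]$, combined with dyadic summation over $r\in(2^{-k-1},2^{-k}]$ and rescaling $Y\mapsto 2^{-k}Y$. This step is the main obstacle. Vague convergence alone does not control mass near $r=0$, so one needs the uniform upper bound $\sup_Y Y^{-\kappa n}\#\{H_E(W)\le Y\}<\infty$. If this is not already proved earlier, I would first try to extract it from the dyadic rescaling argument above, using that each dyadic shell contributes $\asymp 2^{-k\kappa n}$ times a fixed bounded quantity. Failing that, I would cite Thunder's upper bound \cite{Thunder1992}.

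(iii) The limit side. The corresponding errors $\mu(\{r\le1\}\times\Omega_R^c)$ and the $O(\epsilon)$ boundary terms tend to $0$, because $\nu_{E,d}$ is a probability measure and $\Omega_R\uparrow\cY_{E;d,e}$.

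Letting $Y\to\infty$, then $R\to\infty$, then $\epsilon\to0$ yields \eqref{eq:cumulative-global-v2}. The limit equals $\frac{\mathfrak a_{K,n,d}}{H(E)^{\kappa d}}\int_0^1\kappa n r^{\kappa n-1}dr\int f\,d\sigma_{E,d}\,d\nu_{E,d}$, and the $r$-integral is $1$. Taking $f\equiv1$ gives \eqref{eq:total-subspace-count-v2}.
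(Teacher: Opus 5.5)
Your argument is correct, but it takes its vague convergence from a different input than the paper does.

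\emph{The paper's route.} After reducing to $H(E)=1$, the paper starts from Proposition~\ref{prop:sector-equidistribution}, not from Theorem~\ref{thm:general-equidistribution}. That proposition is already a cumulative count over $H_E(W)\le Y$. The sectors $B_{j,Y}$ run over $t_0\le t\le\log Y$ with $t_0$ below the minimal height, so small-height subspaces are included automatically and no mass near $r=0$ needs controlling. Approximation by continuity rectangles gives vague convergence of the cumulative measures on $\Gr_{d,\infty}(E)\times\cY_{E;d,e}$. Compactness of $\cY_{E;d,e}(R)$ (Proposition~\ref{prop:shape-compactness}) and Theorem~\ref{thm:no-escape-global-v2} give asymptotic tightness. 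The standard fact that vague convergence plus tightness yields convergence against bounded continuous functions then finishes the proof, and general $E$ follows by rescaling.

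\emph{Your route.} You use only the two stated theorems as black boxes, matching the introduction's phrase ``combining Theorems~\ref{thm:general-equidistribution} and~\ref{thm:no-escape-global-v2}''. The cost is that you must truncate $r$ by hand near $0$ and near $1$.

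\emph{The obstacle at $r\to0$.} The obstacle you flag is real but already resolved. The paper proves the uniform bound \eqref{eq:grassmann-upper}, namely $\#\{W:\dim_KW=d,\ H_E(W)\le X\}\ll_E1+X^{\kappa n}$, in Section~4. Your own dyadic argument also closes. By Northcott, the count at any bounded scale $Y'$ is finite, and it is zero below the minimal height. Hence $\sup_{Y'>0}Y'^{-\kappa n}\#\{Y'/2<H_E(W)\le Y',\ \text{shape in }\Omega_{2R}\}<\infty$, and the geometric series in $k$ converges. If the resulting constant seems to depend on $R$, let $\epsilon\to0$ before $R\to\infty$; the quantity being estimated depends on neither parameter.

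\emph{A citation fix.} The compactness of $\Omega_R$ and the continuity of $\Lambda_m$ that you need for $\chi_R$ come from Proposition~\ref{prop:shape-compactness}, not from Section~\ref{subsec:parameter-spaces}.
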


For the standard adelic space $E=K^n$, the last formula agrees with the
leading term in Thunder's theorem \cite{Thunder1992}*{Theorem~1}.
The equidistribution theorem also gives the generated-subspace
asymptotic discussed in Section~\ref{sec:applications}.

\subsection{Strategy of the proof}

We first reduce to the case $H(E)=1$ by rescaling the archimedean norms.
Fix the standard $d$-plane $W_0\subset K^n$ and let $\mathbf G=\SL_n$.  Rational $d$-planes are represented by rational
orbits of $W_0$, and the stabilizer is the standard parabolic subgroup
$\mathbf P_d$.  The finite-adelic data are not contained in a single
archimedean quotient.  Using finiteness of the adelic double quotient
and strong approximation, we reduce the problem to finitely many arithmetic
quotients of
\begin{equation}\label{eq:s1-d004-v33}
 G=\prod_{v\mid\infty}\SL_n(K_v).
\end{equation}
On each sector, we separate the height from the remaining
variables.  The parabolic has a one-dimensional positive character whose
value is the height.  Writing this parameter as $t$, the height
is $e^t$.  The other coordinates record the archimedean Grassmannian point and
the two normalized adelic shapes.  In these coordinates, the $t$-factor of
Haar measure is $e^{\kappa nt}\,dt$.  Thus the power $Y^{\kappa n}$ and the
density $\kappa n r^{\kappa n-1}\,dr$ arise from the parabolic Jacobian.
The measure in the remaining variables pushes forward to
$\sigma_{E,d}\otimes\nu_{E,d}$, while the finite-adelic normalization
contributes the constant $\mathfrak a_{K,n,d}$.

On relatively compact sets of the shape variables, the sectors form a uniformly
well-rounded family.  A mixing argument on the finitely many quotients gives the
asymptotic for relatively compact continuity sets and hence
Theorem~\ref{thm:general-equidistribution}.  A separate no-escape estimate
controls subspaces whose normalized shapes leave compact sets.  To control
the normalized subspace, we compare Roy--Thunder minima with the ordinary
successive minima of the associated Minkowski lattice.  If the last minimum
is large, Lemma~\ref{lem:adapted-hyperplane} produces a hyperplane whose
height gains a factor of the inverse last minimum.  We then count the
flags $U\subset W$ and obtain an
upper bound for the cusp contribution.  The quotient is treated by duality.
The annihilator $W^\perp\subset E^\vee$ is height preserving, and a
transference inequality converts a large last minimum of $\widehat{E/W}$ into
a large-last-minimum condition for $\widehat{W^\perp}$.  The same flag estimate
can then be applied in the dual space.

These bounds imply asymptotic tightness of the normalized shape variables.
Vague convergence on relatively compact shape sets therefore upgrades to
convergence against bounded continuous functions.  This proves
Theorem~\ref{thm:cumulative-total-v2}.  Finally, the marginal distribution of
$[\widehat W]$ allows us to integrate powers of the last Roy--Thunder minimum.
This gives the asymptotic for subspaces generated by bounded-height projective
points.

\subsection{Organization of the paper}

Section~2 introduces the adelic parameter spaces, measures, and group notation.
Section~3 proves Theorem~\ref{thm:general-equidistribution}: it gives the sector
decomposition and volume formula, constructs the charts, proves the
lattice-point asymptotic, and deduces equidistribution.  Section~4 establishes
the successive-minimum estimates and the no-escape theorem, and then proves
Theorem~\ref{thm:cumulative-total-v2}.  Section~5 contains the applications:
the bounded-height asymptotic for $K^n$ and the count of subspaces generated by
bounded-height projective points.  Appendix~\ref{app:gauge-form} contains the
gauge-form normalization and the computation of the parabolic mass used in
Section~3.

\section{Adelic parameter spaces and notation}

\subsection{Heights and shape spaces}\label{subsec:parameter-spaces}

Fix a number field $K$ of signature $(r_1,r_2)$.  Let $D_K$, $h_K$,
$R_K$, $w_K$, and $\zeta_K(s)$ denote the discriminant, class number,
regulator, number of roots of unity, and Dedekind zeta function of $K$,
respectively, with the convention $R_K=1$ when the unit rank is zero.  Put
\begin{equation}\label{eq:s2-d001-v33}
 \kappa=[K:\Q]=r_1+2r_2,
 \qquad s_\infty=r_1+r_2,
 \qquad \Delta_K=2^{-r_2}|D_K|^{1/2}.
\end{equation}
For every place $v$, let $n_v$ denote the local degree: $n_v=[K_v:\Q_p]$ if
$v\mid p$, $n_v=1$ at a real place, and $n_v=2$ at a complex place.  At a
complex place, $|\cdot|_v$ is the usual complex absolute value.  We normalize
the absolute values so that
\begin{equation}\label{eq:s2-d002-v33}
 \prod_v|a|_v^{n_v}=1\qquad(a\in K^\times).
\end{equation}
Put $w_v=n_v/\kappa$.  For a $K$-vector space $V$ and a place $v$, write
$V_v=V\otimes_KK_v$.

For every $m\ge1$ and every place $v$, set
\begin{equation}\label{eq:s2-d003-v33}
 \|(x_1,\ldots,x_m)\|_{m,v}
 =
 \begin{cases}
 \bigl(\sum_{i=1}^m |x_i|_v^2\bigr)^{1/2},
     & v\mid\infty,\\[1mm]
 \max_{1\le i\le m}|x_i|_v,
     & v\nmid\infty.
 \end{cases}.
\end{equation}
Let $E$ be a rigid adelic space over $K$ of dimension $n\ge2$, in the
sense of Gaudron \cite{Gaudron}*{Definition~2}.  Choose a $K$-basis of $E$ and matrices $g_{E,v}\in\GL_n(K_v)$
such that
\begin{equation} \label{dE}
    \|x\|_{E,v}=\|g_{E,v}x\|_{n,v},
\end{equation}

with $g_{E,v}\in\GL_n(\cO_v)$ for all but finitely many finite places.

For $0\ne x\in E$, define
\begin{equation}\label{eq:s2-d005-v33}
 H_E(x)=\prod_v\|x\|_{E,v}^{w_v}.
\end{equation}
The product formula shows that $H_E(\lambda x)=H_E(x)$ for
$\lambda\in K^\times$; thus $H_E$ defines a projective height on
$\Pj(E)(K)$.  More generally, for a rigid adelic space $F$, write $H_F$ for
the projective height defined by the same formula and $H(F)$ for the height of
$\det F$.  If $F\subset E$ is a $K$-subspace, give $F$ the induced adelic
structure and write $H_E(F)=H(F)$.  We give $E/F$ the quotient norms and
$E^\vee$ the operator norms.  For $1\le i\le\dim_K F$, define the
Roy--Thunder minima
\cite{RoyThunder} (see also \cite{Gaudron}*{\S3.1}) by
\begin{equation}\label{eq:RT-minima}
 \Lambda_i(F)=
 \inf\left\{
 \max_{1\le j\le i}H_F(x_j):
 x_1,\ldots,x_i\in F\text{ are $K$-linearly independent}
 \right\}.
\end{equation}
Since $H_F$ is equivalent to a Weil height on $\Pj(F)(K)$, Northcott's
theorem \cite{BombieriGubler}*{Theorem~2.4.9} shows that the infimum in
\eqref{eq:RT-minima} is attained.

Fix $1\le d<n$ and put $e=n-d$.  For an idele $a=(a_v)_v$, write
\begin{equation}\label{eq:s2-d006-v33}
 |a|_{\A_K}=\prod_v|a_v|_v^{n_v},
 \qquad \A_K^1=\{a\in\A_K^\times:|a|_{\A_K}=1\}.
\end{equation}
For $m\ge1$, put
\begin{equation}\label{eq:s2-d007-v33}
 \Kc_m=
 \prod_v\{u\in\GL_m(K_v):\|ux\|_{m,v}=\|x\|_{m,v}
 \text{ for all }x\in K_v^m\},
\end{equation}
and set
\begin{equation}\label{eq:s2-d008-v33}
 \GL_m(\A_K)^1=\{g\in\GL_m(\A_K):|\det g|_{\A_K}=1\},
 \qquad
 \cX_{K,m}=\Kc_m\backslash \GL_m(\A_K)^1/\GL_m(K).
\end{equation}
We use the quotient topology on $\cX_{K,m}$.  The group
$\GL_m(\A_K)^1$ is locally compact and second countable,
$\GL_m(K)$ is discrete and closed, and $\Kc_m$ is compact.  It follows that
$\cX_{K,m}$ is locally compact, Hausdorff, and second countable; in
particular, it is a standard Borel space.

We identify $\cX_{K,m}$ with the
$K$-linear isometry classes of $m$-dimensional rigid adelic spaces of height
$1$ as follows.  Let $F$ be an $m$-dimensional rigid adelic space over $K$
with $H(F)=1$.
Choose a $K$-basis and write
\begin{equation}\label{eq:s2-d009-v33}
 \|x\|_{F,v}=\|g_vx\|_{m,v},
 \qquad g_F=(g_v)_v.
\end{equation}
Then
\begin{equation}\label{eq:s2-d010-v33}
 H(F)^\kappa=|\det g_F|_{\A_K}.
\end{equation}
Hence $H(F)=1$ if and only if $g_F\in\GL_m(\A_K)^1$.  In this case put
\begin{equation}\label{eq:s2-d011-v33}
 [F]:=\Kc_mg_F\GL_m(K)\in\cX_{K,m}.
\end{equation}
This depends only on the $K$-linear isometry class of $F$, and every class in
$\cX_{K,m}$ arises in this way.  For $L=[F]\in\cX_{K,m}$ and
$1\le i\le m$, we also write $\Lambda_i(L):=\Lambda_i(F)$ for the
successive minima of the rigid adelic space represented by $L$.

For $t>0$, let $F\langle t\rangle$ denote the rigid adelic space obtained
from $F$ by multiplying the archimedean norms by $t$ and leaving the finite
norms unchanged.  If $S\subset F$ has dimension $r$, then
\begin{equation}\label{eq:s2-d012-v33}
 H_{F\langle t\rangle}(S)=t^rH_F(S),
 \qquad
 \Lambda_i(F\langle t\rangle)=t\Lambda_i(F),
 \qquad
 H(F\langle t\rangle)=t^mH(F).
\end{equation}
For an arbitrary $F$, put
\begin{equation}\label{eq:height-one-normalization-v33}
 \widehat F=F\langle H(F)^{-1/m}\rangle.
\end{equation}
Then
\begin{equation}\label{eq:s2-d014-v33}
 H(\widehat F)=1,
 \qquad
 \Lambda_i(\widehat F)=H(F)^{-1/m}\Lambda_i(F),
\end{equation}
so $[\widehat F]\in\cX_{K,m}$.

The quotient $\GL_m(\A_K)^1/\GL_m(K)$ has finite Haar volume
\cite{BorelAdeles}*{Theorem~5.8}.  Normalize Haar measure on this quotient
to have total mass one, and let $\mu_{K,m}$ be its pushforward to
$\cX_{K,m}$.  Put
\begin{equation}\label{eq:s2-d015-v33}
 \mathcal U_K=
 \prod_{v\nmid\infty}\cO_v^\times
 \times\prod_{v\text{ real}}\{\pm1\}
 \times\prod_{v\text{ complex}}S^1,
 \qquad
 \overline C_K^1=\mathcal U_K\backslash\A_K^1/K^\times.
\end{equation}
The group $\overline C_K^1$ is compact, being a quotient of the norm-one
idele class group (the classical idelic case of
\cite{BorelAdeles}*{Theorem~5.8}).  Let $m_{\overline C}$ denote its Haar
probability measure.  Define the continuous determinant-class map
\begin{equation}\label{eq:s2-d016-v33}
 \overline{\det}_m:\cX_{K,m}\longrightarrow\overline C_K^1,
 \qquad \overline{\det}_m(\Kc_mg\GL_m(K))=\mathcal U_K(\det g)K^\times.
\end{equation}
Then
$(\overline{\det}_m)_*\mu_{K,m}=m_{\overline C}$.  Since both spaces are
standard Borel, the disintegration theorem gives conditional probability
measures $\mu_{K,m,c}$, unique for $m_{\overline C}$-almost every $c$ and
supported on $\overline{\det}_m^{-1}(c)$ for almost every $c$, such that
\begin{equation}\label{eq:s2-d017-v33}
 \mu_{K,m}=\int_{\overline C_K^1}\mu_{K,m,c}\,dm_{\overline C}(c).
\end{equation}

Put $c_E=\overline{\det}_n([\widehat E])$.
For a $d$-dimensional $K$-subspace $W\subset E$, the determinant-line
identity $\det E\simeq\det W\otimes\det(E/W)$ and the quotient-height
formula \cite{Gaudron}*{Proposition~5} give
\begin{equation}\label{eq:s2-d018-v33}
 H_E(W)\,H(E/W)=H(E).
\end{equation}
After normalizing $E$, $W$, and $E/W$ to height one, these identities give
\begin{equation}\label{eq:s2-d019-v33}
 \overline{\det}_d([\widehat W])\,
 \overline{\det}_e([\widehat{E/W}])=c_E.
\end{equation}
Set
\begin{equation} \label{y}
   \cY_{E;d,e}=
 \{(L_1,L_2)\in\cX_{K,d}\times\cX_{K,e}:
 \overline{\det}_d(L_1)\overline{\det}_e(L_2)=c_E\}. 
\end{equation}

Since the determinant-class maps are continuous, $\cY_{E;d,e}$ is the inverse
image of $\{c_E\}$ under the continuous map
\begin{equation}\label{eq:s2-d020-v33}
 (L_1,L_2)\longmapsto
 \overline{\det}_d(L_1)\overline{\det}_e(L_2).
\end{equation}
Thus $\cY_{E;d,e}$ is a closed subspace of
$\cX_{K,d}\times\cX_{K,e}$; in particular, it is locally compact, second
countable, and standard Borel.  We equip it with the subspace topology and the
induced orbifold structure, and define
\begin{equation}\label{nu}
    \nu_{E,d}=
 \int_{\overline C_K^1}
 \mu_{K,d,c}\otimes\mu_{K,e,c_Ec^{-1}}\,dm_{\overline C}(c).
\end{equation}
 
Each integrand is supported on $\cY_{E;d,e}$, so $\nu_{E,d}$ is a probability
measure there.  The measure in \eqref{nu} is independent of the chosen
versions of the conditional measures: changing either disintegration on a
Haar-null set does not change the integral, since
$c\mapsto c_Ec^{-1}$ preserves Haar-null sets.  By invariance of $m_{\overline C}$, the two marginals satisfy
\begin{equation}\label{eq:marginals-v4}
 \bigl((L_1,L_2)\mapsto L_1\bigr)_*\nu_{E,d}=\mu_{K,d},
 \qquad
 \bigl((L_1,L_2)\mapsto L_2\bigr)_*\nu_{E,d}=\mu_{K,e}.
\end{equation}

Every pair $([\widehat W],[\widehat{E/W}])$ lies in $\cY_{E;d,e}$.
Conditional on $c\in\overline C_K^1$, the measure in \eqref{nu} is
$\mu_{K,d,c}\otimes\mu_{K,e,c_Ec^{-1}}$, and
\eqref{eq:marginals-v4} gives the two marginals.  When $K=\Q$,
$\overline C_K^1$ is trivial.

Set
\begin{equation}\label{eq:s2-d022-v33}
 \Gr_{d,\infty}(E)=\prod_{v\mid\infty}\Gr_d(E_v).
\end{equation}
For a $d$-dimensional $K$-subspace $W\subset E$, put
\begin{equation}\label{eq:s2-d023-v33}
 W_\infty=(W\otimes_KK_v)_{v\mid\infty}\in\Gr_{d,\infty}(E).
\end{equation}
For every place $v$, put
\begin{equation}\label{eq:det-one-isometry-group-v1}
 \Kc_{E,v}=
 \{u\in\GL(E_v):\|ux\|_{E,v}=\|x\|_{E,v}\text{ for all }x\}
 \cap\SL(E_v).
\end{equation}
For $v\mid\infty$, the compact group $\Kc_{E,v}$ acts transitively on
$\Gr_d(E_v)$.  Let $\sigma_{E,d}$ be the product of the unique
$\Kc_{E,v}$-invariant probability measures on $\Gr_d(E_v)$.

Let $V_j$ be the volume of the Euclidean unit ball in $\R^j$, and set
\begin{equation}\label{eq:s2-d024-v33}
 \mathcal V_{n,d}
 =
 \frac{\displaystyle\prod_{j=n-d+1}^{n}V_j}
      {\displaystyle\prod_{j=2}^{d}V_j},
 \qquad
 \mathcal V^{(2)}_{n,d}
 =
 \frac{\displaystyle\prod_{j=n-d+1}^{n}V_{2j}}
      {\displaystyle\prod_{j=2}^{d}V_{2j}},
\end{equation}
and
\begin{equation}\label{eq:s2-d025-v33}
 \mathcal Z_{K,n,d}
 =
 \frac{\displaystyle\prod_{j=n-d+1}^{n}\zeta_K(j)}
      {\displaystyle\prod_{j=2}^{d}\zeta_K(j)},
\end{equation}
with the convention that an empty product is $1$.  Set
\begin{equation}\label{a}
    \mathfrak a_{K,n,d}
 =
 \frac{h_KR_K}{w_Kn}
 \Delta_K^{-(de+1)}
 \binom nd^{s_\infty}
 \mathcal V_{n,d}^{r_1}
 \bigl(\mathcal V^{(2)}_{n,d}\bigr)^{r_2}
 \mathcal Z_{K,n,d}^{-1}.
\end{equation}
 
We summarize the notation used below.
\begin{itemize}[leftmargin=2em]
\item The space $\cX_{K,m}$ parametrizes the $K$-linear isometry classes of
$m$-dimensional rigid adelic spaces of height one, and $\mu_{K,m}$ is its
Haar-induced probability measure.
\item For an arbitrary rigid adelic space $F$, the normalization $\widehat F$
is obtained by rescaling the archimedean norms so that $H(\widehat F)=1$.
\item The continuous map $\overline{\det}_m:\cX_{K,m}\to\overline C_K^1$
records the determinant class.
\item The space $\cY_{E;d,e}$ is defined in \eqref{y}; the
probability measure $\nu_{E,d}$ in \eqref{nu} is obtained by disintegration
over the determinant class and has marginals $\mu_{K,d}$ and $\mu_{K,e}$.
\item The leading constant $\mathfrak a_{K,n,d}$ is given in \eqref{a}.
\end{itemize}

\subsection{Groups and Haar measures}\label{subsec:group-measures}

Let
\begin{equation}\label{eq:G-arch-v18}
 \mathbf G=\SL_n,
 \qquad G=\prod_{v\mid\infty}\SL_n(K_v).
\end{equation}
Write the Tamagawa measure on $\mathbf G(\A_K)$ as
\begin{equation}\label{eq:s2-d026-v33}
 dg_{\A}=dg\,dg_f,
\end{equation}
with archimedean factor $dg$ and finite factor $dg_f$.  Since $\SL_n$ is
a split simply connected Chevalley group, its Tamagawa number is one
\cite{Langlands}:
\begin{equation}\label{eq:s2-d027-v33}
 \vol\bigl(\mathbf G(\A_K)/\mathbf G(K)\bigr)=1.
\end{equation}

For local gauge-form calculations, we use Lebesgue measure on each real
additive coordinate and two-dimensional Lebesgue measure on each complex
additive coordinate.  At finite places, we normalize additive Haar
measure by $\vol(\cO_v)=1$.  With these conventions, the diagonal copy of
$K$ in $\A_K$ has covolume $\Delta_K$.

We use counting measures on discrete arithmetic subgroups and define quotient
measures by Weil's quotient formula.  Write $\A_{K,f}$ and $\A_{\Q,f}$ for
the finite adeles of $K$ and $\Q$, respectively.  For an adelic element, write
subscripts $\infty$ and $f$ for its archimedean and finite components.

The adelic groups $\SL_m(\A_K)$ carry their Tamagawa measures, whose
Tamagawa numbers are one.  The adelic and archimedean quotient measures are
not normalized to have total mass one.  We identify arithmetic
subgroups of $\mathbf G(K)$ and of its $K$-subgroups with their archimedean
images whenever no confusion can arise.

Fix a $d$-dimensional $K$-subspace $W_0\subset E$.  Choose a $K$-basis
$e_1,\ldots,e_n$ of $E$ adapted to $W_0$, so that after identifying $E$
with $K^n$ one has
\begin{equation}\label{eq:standard-plane-v18}
 W_0=Ke_1\oplus\cdots\oplus Ke_d\subset K^n.
\end{equation}
For every place $v$ of $K$, put
\begin{equation}\label{eq:s2-d028-v33}
 W_{0,v}=W_0\otimes_KK_v.
\end{equation}
Let $\mathbf P_d\subset\SL_n$ be the stabilizer of $W_0$.  We use the standard block decomposition associated with
$K^n=W_0\oplus(Ke_{d+1}\oplus\cdots\oplus Ke_n)$.  Accordingly,
$\mathbf P_d$ is
\begin{equation}\label{eq:parabolic-block-v18}
    \mathbf P_d
 =\left\{
   \begin{pmatrix}A &B\\0&D\end{pmatrix}
   :B\in\operatorname{Mat}_{d\times e},A\in\GL_d,\ D\in\GL_e,\ \det(A)\det(D)=1
  \right\}.
\end{equation}
Let
\begin{equation} \label{Nd}
    \mathbf N_d
 =\left\{
   \begin{pmatrix}I_d&X\\0&I_e\end{pmatrix}
   :X\in\operatorname{Mat}_{d\times e}
  \right\},
\end{equation}
 
\begin{equation}\label{Md}
    \mathbf M_d
 =\left\{
   \begin{pmatrix}A&0\\0&D\end{pmatrix}
   :A\in\GL_d,\ D\in\GL_e,\ \det(A)\det(D)=1
  \right\}.
\end{equation}
Thus $\mathbf N_d$ is the unipotent radical of $\mathbf P_d$, and
$\mathbf M_d$ is the Levi subgroup.  We have
\begin{equation}\label{eq:parabolic-Levi-v18}
\mathbf P_d=\mathbf N_d\rtimes\mathbf M_d.
\end{equation}
Let
\begin{equation}\label{LP}
\pi_M:\mathbf P_d\longrightarrow\mathbf M_d
\end{equation}
be the Levi projection.

Define the algebraic character $\vartheta:\mathbf P_d\longrightarrow \mathbf G_m$ by
\begin{equation} \label{the}
\vartheta\!\left(\begin{pmatrix}A&B\\0&D\end{pmatrix}\right)=\det A.
\end{equation}
Thus
\begin{equation} \label{thek}
\mathbf S_d=\ker(\vartheta|_{\mathbf M_d})\simeq\SL_d\times\SL_e.
\end{equation}

At the archimedean places, set
\begin{equation} \label{PNMS}
 P_{d,\infty}=\prod_{v\mid\infty}\mathbf P_d(K_v),
 \qquad
 N_{d,\infty}=\prod_{v\mid\infty}\mathbf N_d(K_v),
 \qquad
 M_{d,\infty}=\prod_{v\mid\infty}\mathbf M_d(K_v).
\end{equation}
At each archimedean place $v$, write
$p_v=\left(\begin{smallmatrix}A_v&B_v\\0&D_v\end{smallmatrix}\right)$.
For $p_\infty=(p_v)_{v\mid\infty}\in P_{d,\infty}$, define the
character $\chi_d:P_{d,\infty}\longrightarrow \R_{>0}^{\times}$ by
\begin{equation}\label{eq:parabolic-height-character-early-v4}
 \chi_d(p_\infty)
 :=
 \prod_{v\mid\infty}
 |\vartheta(p_v)|_v^{w_v}=\prod_{v\mid\infty}|\det A_v|_v^{w_v},
 \qquad
 P_{d,\infty}^1=\ker\chi_d,
 \qquad
 M_{d,\infty}^1=M_{d,\infty}\cap P_{d,\infty}^1.
\end{equation}
Then
\begin{equation}\label{levi}
P_{d,\infty}^1=N_{d,\infty}\rtimes M_{d,\infty}^1.
\end{equation}

For $t\in\mathbb R$ set
\begin{equation}\label{ad}
a_{d,v}(t)=
 \begin{pmatrix}
  e^{t/d}I_d&0\\
  0&e^{-t/e}I_e
 \end{pmatrix},
 \qquad
 a_d(t)=(a_{d,v}(t))_{v\mid\infty}.
\end{equation}
Then $\chi_d(a_d(t))=e^t$.

The stabilizer of $W_0$ in $\SL_n(K)$ is $\mathbf P_d(K)$, and the map
\begin{equation}\label{eq:rational-Grassmannian-v18}
 \SL_n(K)/\mathbf P_d(K)\longrightarrow\Gr_d(K^n),
 \qquad \gamma\mathbf P_d(K)\longmapsto \gamma W_0
\end{equation}
is a bijection.  For surjectivity, extend a basis of a $d$-dimensional
$K$-subspace to a basis of $K^n$ and rescale one basis vector so that the
resulting change-of-basis matrix has determinant $1$.  Injectivity follows
from the definition of $\mathbf P_d(K)$ as the stabilizer of $W_0$.

\section{Sector decomposition and equidistribution}
\label{sec:equidistribution}

\subsection{Sector decomposition}\label{subsec:sector-decomposition}

Recall from \eqref{eq:det-one-isometry-group-v1} that
$\Kc_{E,v}$ is the determinant-one isometry group of $E_v$.  Put
\begin{equation}\label{eq:KE-global-v18}
 \Kc_{E,\infty}=\prod_{v\mid\infty}\Kc_{E,v},
 \qquad
 \Kc_{E,f}=\prod_{v\nmid\infty}\Kc_{E,v}.
\end{equation}
At a finite place,
\begin{equation}\label{eq:KEv-integral-v18}
 \Kc_{E,v}=g_{E,v}^{-1}\SL_n(\cO_v)g_{E,v},
\end{equation}
where $g_{E,v}$ defines the rigid adelic structure on $E$; see \eqref{dE}.

Put
\begin{equation}\label{eq:HdE-v18}
 H_{d,E}
 =\prod_{v\mid\infty}\bigl(\mathbf P_d(K_v)\cap\Kc_{E,v}\bigr)
 =P_{d,\infty}\cap\Kc_{E,\infty}.
\end{equation}
Here $\mathbf P_d=\mathbf N_d\rtimes\mathbf M_d$ is the stabilizer of the
standard plane $W_0$; see \eqref{eq:standard-plane-v18},
\eqref{eq:parabolic-block-v18}, and \eqref{eq:parabolic-Levi-v18}.

Thus $\Kc_{E,f}$ is a compact open subgroup commensurable with
$\prod_{v\nmid\infty}\SL_n(\cO_v)$.
Strong approximation for $\SL_n$
\cite{PlatonovRapinchuk}*{Theorem~7.12} gives
\begin{equation}\label{eq:strong-approximation-SLn-v1}
 \SL_n(\A_{K,f})=\Kc_{E,f}\SL_n(K).
\end{equation}
Consequently
\begin{equation}\label{eq:finite-parabolic-classes-v14}
 \Kc_{E,f}\backslash \mathbf G(\A_{K,f})/\mathbf P_d(K)
 \simeq
 (\SL_n(K)\cap\Kc_{E,f})\backslash\SL_n(K)/\mathbf P_d(K).
\end{equation}
The arithmetic group $\SL_n(K)\cap\Kc_{E,f}$ is commensurable with
$\SL_n(\cO_K)$.  Borel's finiteness theorem for arithmetic orbits on
$\mathbf G/\mathbf P_d$ \cite{BorelAdeles}*{Theorem~7.3} gives finitely
many $\SL_n(\cO_K)$-orbits.  The intersection of
$\SL_n(K)\cap\Kc_{E,f}$ and $\SL_n(\cO_K)$ has finite index in each,
so the right-hand side of \eqref{eq:finite-parabolic-classes-v14} is finite
as well.  Equation
\eqref{eq:strong-approximation-SLn-v1} also shows that
every class has a representative in $\SL_n(K)$.

Recall from \eqref{the} and \eqref{eq:parabolic-height-character-early-v4} the algebraic character
$\vartheta:\mathbf P_d\to\mathbf G_m$, the associated archimedean character
$\chi_d:P_{d,\infty}\to\R_{>0}^{\times}$, and the subgroup
$P_{d,\infty}^1=\ker\chi_d$.
\begin{lemma}\label{lem:finite-covolume-v4}
Let $C_f$ be a compact open subgroup of
$\mathbf P_d(\A_{K,f})$, and set
\begin{equation}\label{Gammap}
 \Gamma_P=\mathbf P_d(K)\cap C_f.
\end{equation}
Under the archimedean embedding, $\Gamma_P$ is a lattice in
$P_{d,\infty}^1$; equivalently,
\begin{equation}\label{eq:GammaP-lattice-v18}
 \vol\!\left(P_{d,\infty}^1/\Gamma_P\right)<\infty.
\end{equation}
\end{lemma}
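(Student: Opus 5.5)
We prove the lemma by splitting $P^1_{d,\infty}=N_{d,\infty}\rtimes M^1_{d,\infty}$ as in \eqref{levi} and proving finite covolume separately for the unipotent radical and for the Levi factor. The fibration argument then combines the two.

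First we check that $\Gamma_P$ actually lies in $P^1_{d,\infty}$. For $\gamma\in\Gamma_P$ we have $\vartheta(\gamma)\in K^\times$. At every finite place, $\gamma_v\in C_f$ lies in a compact subgroup of $\mathbf P_d(K_v)$. Hence $|\vartheta(\gamma)|_v=1$ for all finite $v$, because $\vartheta(C_f)$ is a compact subgroup of $\A_{K,f}^\times$. The product formula \eqref{eq:s2-d002-v33} then gives $\chi_d(\gamma)=\prod_{v\mid\infty}|\vartheta(\gamma)|_v^{w_v}=1$. Moreover, $\Gamma_P$ is discrete in $P_{d,\infty}$, since it is contained in $\mathbf P_d(K)\cap C'_f$ for a compact open subgroup $C'_f$ of $\SL_n(\A_{K,f})$. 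Such a group is an $S$-arithmetic group with $S$ the archimedean places, and is therefore discrete in $G$.

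Next consider the unipotent radical. The group $\Gamma_N=\Gamma_P\cap\mathbf N_d(K)=\mathbf N_d(K)\cap C_f$ is identified with a subgroup of $\operatorname{Mat}_{d\times e}(K)$ of the form $\{X: X\in\Lambda_v \text{ for all finite } v\}$. Here the $\Lambda_v$ are compact open $\cO_v$-lattices, equal to $\operatorname{Mat}_{d\times e}(\cO_v)$ for almost all $v$. This is a finitely generated $\cO_K$-module commensurable with $\operatorname{Mat}_{d\times e}(\cO_K)$, hence a cocompact lattice in $N_{d,\infty}\simeq\prod_{v\mid\infty}\operatorname{Mat}_{d\times e}(K_v)$.

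Now consider the Levi factor. The image $\pi_M(\Gamma_P)$ lies in $\mathbf M_d(K)\cap\pi_M(C_f)$, where $\pi_M(C_f)$ is compact open in $\mathbf M_d(\A_{K,f})$. Thus $\pi_M(\Gamma_P)$ is contained in an arithmetic subgroup of $\mathbf M_d\simeq\{(A,D)\in\GL_d\times\GL_e:\det A\det D=1\}\simeq\GL_d\times\SL_e$, via $(A,D)\mapsto(A,(\det A)^{1/e}\cdots)$; more cleanly, we use the isogeny decomposition. We also need $\pi_M(\Gamma_P)$ to have finite index in $\mathbf M_d(K)\cap\pi_M(C_f)$. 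For this we pick $C'_f=C_f\cap\pi_M(C_f)\subset C_f$; this is compact open since $\mathbf M_d\subset\mathbf P_d$. Then $\mathbf M_d(K)\cap C'_f\subset\Gamma_P$, and it is commensurable with $\mathbf M_d(K)\cap\pi_M(C_f)$.

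The main obstacle is the Levi factor: we must show that an arithmetic subgroup $\Gamma_M$ of the reductive group $\mathbf M_d$ is a lattice in $M^1_{d,\infty}$, that is, in the kernel of the modulus of the $K$-rational character $\vartheta$. This is the content of Borel--Harish-Chandra, in the form \cite{BorelAdeles}*{Theorem~5.8} or the usual ${}^0M$ statement: for a reductive $\Q$-group, the arithmetic group has finite covolume in ${}^0M(\R)$, the intersection of kernels of $|\chi|$ over rational characters. We apply it to $\operatorname{Res}_{K/\Q}\mathbf M_d$. Its $\Q$-rational characters are generated, up to finite index, by $N_{K/\Q}\circ\vartheta$, because $\mathbf M_d/\mathbf S_d\simeq\mathbf G_m$ via $\vartheta$. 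Its ${}^0M$ is therefore exactly $M^1_{d,\infty}$, with the normalization $w_v$ only rescaling the character. Alternatively, we can argue directly. The group $\mathbf S_d(K)\cap C'_f$ is a lattice in $\prod_{v\mid\infty}(\SL_d\times\SL_e)(K_v)$ by \cite{BorelAdeles}*{Theorem~5.8} for semisimple groups. The quotient $M^1_{d,\infty}/S_{d,\infty}$ is the norm-one part of $\prod_{v\mid\infty}K_v^\times$, and the image of $\Gamma_M$ there contains a finite-index subgroup of $\cO_K^\times$. Dirichlet's unit theorem says this is cocompact.

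Finally, we assemble the pieces using the fibration $N_{d,\infty}/\Gamma_N\to P^1_{d,\infty}/\Gamma_P\to M^1_{d,\infty}/\pi_M(\Gamma_P)$. Weil's quotient formula then gives $\vol(P^1_{d,\infty}/\Gamma_P)=\vol(N_{d,\infty}/\Gamma_N)\cdot\vol(M^1_{d,\infty}/\pi_M(\Gamma_P))<\infty$. Here we use that $\Gamma_N$ is normal in $\Gamma_P$ with $\Gamma_P/\Gamma_N\simeq\pi_M(\Gamma_P)$, and that all groups involved are unimodular on the relevant pieces. The group $P^1_{d,\infty}$ itself is unimodular: its modular character is a power of $\chi_d$, which is trivial on $P^1_{d,\infty}$.
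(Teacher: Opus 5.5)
Your argument is correct, but it is organized differently from the paper's.

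\textbf{The paper's route.} The paper never separates the unipotent radical from the Levi factor. It passes to the non-reductive $\Q$-group $\mathbf H=\ker(\operatorname{Nm}_{K/\Q}\circ\operatorname{Res}_{K/\Q}\vartheta)\subset\operatorname{Res}_{K/\Q}\mathbf P_d$. It checks that $\mathbf H$ has no nontrivial $\Q$-characters: $\ker\vartheta=\mathbf N_d\rtimes\mathbf S_d$ has none, and $\mathbf H/\operatorname{Res}_{K/\Q}(\ker\vartheta)$ is the norm-one torus. It then applies Borel--Harish-Chandra's Theorem~9.4 once, to the arithmetic group $\mathbf H(\Q)\cap C_f$. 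Finally it absorbs the sign of the norm through the index-two comparisons $[P^1_{d,\infty}:\mathbf H(\R)]\le2$ and $[\Gamma_P:\mathbf H(\Q)\cap C_f]\le2$.

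\textbf{Your route.} You prove separately that $\Gamma_N$ is a cocompact lattice in $N_{d,\infty}$, and that $\pi_M(\Gamma_P)$ is a lattice in $M^1_{d,\infty}={}^0(\operatorname{Res}_{K/\Q}\mathbf M_d)(\R)$. The latter group is commensurable with $\mathbf M_d(K)\cap\pi_M(C_f)$ through $C_f\cap\mathbf M_d(\A_{K,f})$. You then glue the two pieces with the fibration. Because ${}^0M$ is cut out by $|\chi|=1$ rather than $\chi=1$, you avoid the index-two step.

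\textbf{Comparison.} The paper's argument is shorter and needs only one character computation, but it relies on the general non-reductive finiteness theorem. Yours uses only the reductive case, or even just the semisimple case together with Dirichlet's unit theorem. It also makes explicit the exact sequence $1\to\Gamma_N\to\Gamma_P\to\pi_M(\Gamma_P)\to1$ and the constant fibre volume (the modulus of $\operatorname{Ad}(m)$ on $N_{d,\infty}$ is $\chi_d(m)^{\kappa n}=1$). The paper has to establish exactly this structure later anyway, in Lemma~\ref{lem:sector-charts-v4} and in the appendix.

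\textbf{Two small corrections to wording.}
\begin{enumerate}[label=(\roman*)]
\item A compact open subgroup of $\operatorname{Mat}_{d\times e}(\A_{K,f})$ need not be a product of $\cO_v$-lattices, so $\Gamma_N$ is in general only a $\Z$-module, not an $\cO_K$-module. It is, however, sandwiched between two $\widehat{\cO}_K$-lattices, and that is all your argument needs.
\item The proposed identification of $\mathbf M_d$ with $\GL_d\times\SL_e$ is not a group isomorphism; in fact $\mathbf M_d\simeq\SL_e\rtimes\GL_d$. You rightly do not use it.
\end{enumerate}
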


\begin{proof}
For every $\gamma\in\Gamma_P$, the image $\vartheta(C_f)$ is a compact
subgroup of $\A_{K,f}^{\times}$.  Its finite idelic norm is therefore
identically $1$.  The product formula gives
\begin{equation}\label{eq:s2-d029-v33}
 \chi_d(\gamma_\infty)=1,
\end{equation}
so $\Gamma_P\subset P_{d,\infty}^1$.

Consider the $\Q$-group
\begin{equation}\label{eq:s2-d030-v33}
 \mathbf H=
 \ker\!\left(
   \operatorname{Nm}_{K/\Q}\circ
   \operatorname{Res}_{K/\Q}(\vartheta)
 \right)
 \subset \operatorname{Res}_{K/\Q}\mathbf P_d.
\end{equation}
We claim that $\mathbf H$ has no nontrivial $\Q$-rational characters.
Indeed,
\begin{equation}\label{eq:s2-d031-v33}
 \ker\vartheta=\mathbf N_d\rtimes\mathbf S_d,
 \qquad
 \mathbf S_d\simeq\SL_d\times\SL_e.
\end{equation}
Hence every $\Q$-rational character of $\mathbf H$ is trivial on
$\operatorname{Res}_{K/\Q}(\ker\vartheta)$, since unipotent groups and
semisimple groups have no nontrivial algebraic characters.  Define the
norm-one torus by
\begin{equation}\label{eq:s2-d033-v33}
 \operatorname{Res}^{1}_{K/\Q}\mathbf G_m
 =
 \ker\!\left(
   \operatorname{Nm}_{K/\Q}:
   \operatorname{Res}_{K/\Q}\mathbf G_m\longrightarrow\mathbf G_m
 \right).
\end{equation}
Since $\vartheta:\mathbf P_d\to\mathbf G_m$ is surjective, restriction of
scalars gives
\begin{equation}\label{eq:s2-d032-v33}
 \mathbf H/\operatorname{Res}_{K/\Q}(\ker\vartheta)
 \simeq \operatorname{Res}^{1}_{K/\Q}\mathbf G_m.
\end{equation}
Thus every $\Q$-rational character of $\mathbf H$ factors through this
torus, which has no nontrivial $\Q$-rational characters.  Indeed, for each
embedding $\sigma:K\hookrightarrow\overline{\Q}$ let $e_\sigma$ denote the
corresponding standard basis vector.  Over $\overline{\Q}$ the character
group is
\begin{equation}\label{eq:s2-d034-v33}
 \left(
   \bigoplus_{\sigma:K\hookrightarrow\overline{\Q}}\Z e_\sigma
 \right)
 \Big/
 \Z\!\left(\sum_\sigma e_\sigma\right),
\end{equation}
and the absolute Galois group acts transitively on the embeddings
$\sigma:K\hookrightarrow\overline{\Q}$, so this quotient has no nonzero
Galois-fixed element.  This proves the claim.

Now $\mathbf H(\Q)\cap C_f$ is an arithmetic subgroup of $\mathbf H(\Q)$:
indeed, $C_f\cap\mathbf H(\A_{\Q,f})$ is compact open and hence is
commensurable with an integral compact open subgroup attached to any
faithful $\Q$-representation of $\mathbf H$.  By the
Borel--Harish-Chandra finiteness theorem
\cite{BorelHC}*{Theorem~9.4, p.~522},
\begin{equation}\label{eq:s2-d035-v33}
 \vol\!\left(
   \mathbf H(\R)/(\mathbf H(\Q)\cap C_f)
 \right)<\infty.
\end{equation}

Finally, for $p_\infty\in P_{d,\infty}$, the definition of $\chi_d$ gives
\begin{equation}\label{eq:s2-d036-v33}
 \chi_d(p_\infty)^\kappa
 =
 \left|
   \operatorname{Nm}_{K/\Q}\bigl(\vartheta(p_\infty)\bigr)
 \right|.
\end{equation}
Thus $p_\infty\in P_{d,\infty}^1$ if and only if
$\operatorname{Nm}_{K/\Q}(\vartheta(p_\infty))=\pm1$, whereas
$\mathbf H(\R)$ is the subgroup on which this norm equals $1$.  Hence
\begin{equation}\label{eq:s2-d037-v33}
 [P_{d,\infty}^1:\mathbf H(\R)]\le2.
\end{equation}
The same argument gives
\begin{equation}\label{eq:s2-d038-v33}
 [\Gamma_P:\mathbf H(\Q)\cap C_f]\le2.
\end{equation}
Therefore $\Gamma_P$ is discrete and has finite covolume in
$P_{d,\infty}^1$.
\end{proof}

Before stating the sector decomposition, we recall the notation used in its
statement:
\begin{equation}\label{eq:sector-recall-v18}
 \mathbf G=\SL_n,
 \qquad
 G=\prod_{v\mid\infty}\SL_n(K_v),
 \qquad
 e=n-d,
 \qquad
 P_{d,\infty}^1=\ker\chi_d,
 \qquad
 \chi_d(a_d(t))=e^t,
\end{equation}
with $W_0$ and $\mathbf P_d$ as in \eqref{eq:standard-plane-v18} and
\eqref{eq:parabolic-block-v18}.  The compact groups $\Kc_{E,\infty}$ and
$\Kc_{E,f}$ are defined in \eqref{eq:KE-global-v18}, and $H_{d,E}$ in
\eqref{eq:HdE-v18}.

\begin{proposition}\label{prop:sector-decomposition-v4}
Assume $H(E)=1$ and fix the $K$-basis of $E$ chosen above, so that the
underlying vector space is identified with $K^n$.  There exist elements
$\gamma_1,\ldots,\gamma_J\in\mathbf G(K)$ and subgroups
$\Gamma_j\subset\mathbf G(K)$ whose archimedean images are lattices in
$G$.  Put
\begin{equation}\label{eq:GammaPj-v18}
 \Gamma_{P,j}=\mathbf P_d(K)\cap\Gamma_j.
\end{equation}
For each $j$, one can choose a constant $\tau_j\in\mathbb R$, a Borel
space $\mathcal S_j$, a finite Borel measure $\rho_j$ on $\mathcal S_j$,
and Borel maps
\begin{equation}\label{eq:sector-maps-v18}
 \widetilde\Phi_j:\mathcal S_j\times\mathbb R\longrightarrow G,
 \qquad
 \pi_{j,\infty}:\mathcal S_j\longrightarrow\Gr_{d,\infty}(E),
 \qquad
 \pi_{j,\mathcal Y}:\mathcal S_j\longrightarrow\cY_{E;d,e}.
\end{equation}
There is also $t_0\in\mathbb R$ such that the following hold.
\begin{enumerate}[label=\textup{(\roman*)}]
\item For Borel sets $\Theta\subset\Gr_{d,\infty}(E)$ and
$\Omega\subset\cY_{E;d,e}$, and for $Y>0$, put
\begin{equation}\label{eq:sector-definition-v14}
 B_{j,Y}(\Theta,\Omega)
 =
 \left\{
  \widetilde\Phi_j(z,t):
  \pi_{j,\infty}(z)\in\Theta,\ \pi_{j,\mathcal Y}(z)\in\Omega,\
  t_0\le t\le\log Y
 \right\}.
\end{equation}
Then
\begin{align}\label{eq:sector-count-v4}
 &\#\{W\subset E:\dim_KW=d,\ H_E(W)\le Y,
        \ W_\infty\in\Theta,\ ([\widehat W],[\widehat{E/W}])\in\Omega\}\notag\\
 &\qquad=
 \sum_{j=1}^J\#\bigl(\Gamma_j\cap B_{j,Y}(\Theta,\Omega)\bigr).
\end{align}
\item For $g\in\Gamma_j$ whose archimedean component is
$\widetilde\Phi_j(z,t)$, put $W(g)=\gamma_jgW_0$.  Then
\begin{equation}\label{eq:sector-invariants-v14}
 H_E(W(g))=e^t,
 \qquad
 W(g)_\infty=\pi_{j,\infty}(z),
 \qquad
 ([\widehat{W(g)}],[\widehat{E/W(g)}])=\pi_{j,\mathcal Y}(z).
\end{equation}
\item The Haar measure satisfies
\begin{equation}\label{eq:sector-haar-product-v14}
 \widetilde\Phi_j^*(dg)=e^{\kappa nt}\,d\rho_j(z)\,dt,
\end{equation}
in the sense of the quotient integration formula below.
\end{enumerate}
\end{proposition}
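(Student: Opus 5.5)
We start from the finite set of double cosets in \eqref{eq:finite-parabolic-classes-v14}. Strong approximation \eqref{eq:strong-approximation-SLn-v1} lets us choose the representatives $\gamma_1,\ldots,\gamma_J\in\SL_n(K)$. Thus $\Gr_d(K^n)=\bigsqcup_j(\SL_n(K)\cap\Kc_{E,f})\gamma_j\mathbf P_d(K)W_0$. Put $\Gamma_j=\gamma_j^{-1}(\SL_n(K)\cap\Kc_{E,f})\gamma_j$; it is commensurable with $\SL_n(\cO_K)$, hence a lattice in $G$. Then $\Gamma_{P,j}=\mathbf P_d(K)\cap\Gamma_j$ is of the form in Lemma~\ref{lem:finite-covolume-v4}, with $C_f=\mathbf P_d(\A_{K,f})\cap\gamma_j^{-1}\Kc_{E,f}\gamma_j$. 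The subspaces in the $j$th class are $W(g)=\gamma_jgW_0$ with $g\in\Gamma_j/\Gamma_{P,j}$. Counting them is the same as counting $g\in\Gamma_j$ in a set that is right $P_{d,\infty}$-saturated only modulo a fundamental domain. So I fix a Borel fundamental domain $\mathcal F_j\subset P^1_{d,\infty}$ for $\Gamma_{P,j}$. The sector will be built from $\Kc_{E,\infty}\times\mathcal F_j$ and the one-parameter group $a_d(t)$.

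\textbf{The invariants.} For $g\in\Gamma_j$ write $g_\infty=k\,a_d(t)\,p$ with $k\in\Kc_{E,\infty}$ and $p\in P^1_{d,\infty}$; this Iwasawa-type decomposition exists since $\Kc_{E,\infty}$ acts transitively on $\Gr_{d,\infty}$. Let $\wedge^d$ of the basis $e_1,\ldots,e_d$ represent $W_0$. The finite local heights of $\gamma_jgW_0$ depend only on the finite data. Because $g_f\in\gamma_j^{-1}\Kc_{E,f}\gamma_j$, they equal those of $\gamma_jW_0$, which give a constant depending on $j$. The archimedean part is $\prod_v\|k_va_{d,v}(t)p_v e_1\wedge\cdots\wedge e_d\|^{w_v}=e^t\chi_d(p)\cdot(\text{const})$. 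Absorbing the constants into $\tau_j$ (equivalently, shifting $t$), we get $H_E(W(g))=e^t$. Next, $W(g)_\infty=k\,W_{0,\infty}$, which depends only on $kH_{d,E}$. The induced structure on $W(g)$ and the quotient structure on $E/W(g)$ are pulled back along $\gamma_jg$. They are therefore described by the blocks $A,D$ of $\pi_M(a_d(t)p)$ at the archimedean places, together with fixed finite data depending on $j$ and $C_f$. After height-one normalization the factors $e^{t/d}$ and $e^{-t/e}$ disappear, so $([\widehat W],[\widehat{E/W}])$ depends only on $(k,p)$. We take $\mathcal S_j=(\Kc_{E,\infty}\times\mathcal F_j)/\!\sim$, quotienting by the compact $H_{d,E}$-ambiguity $k\mapsto kh$, $p\mapsto h^{-1}\cdots$. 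The map is $\widetilde\Phi_j(z,t)=k\,a_d(t)\,p$, and $\pi_{j,\infty},\pi_{j,\mathcal Y}$ are the maps just computed. Since $\Gamma_j$ preserves the finite data, this proves (ii). Part (i) then follows: each $W$ in class $j$ corresponds to exactly one $g\in\Gamma_j$ whose archimedean component lies in $\Kc_{E,\infty}a_d(\R)\mathcal F_j$. The cutoff $t_0$ is a uniform lower bound for the height, which exists by Northcott (heights are bounded below on $\Gr_d$).

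\textbf{The Haar measure.} For (iii), use the decomposition $G=\Kc_{E,\infty}P_{d,\infty}$ with $dg=dk\,d_lp$. Then write $P_{d,\infty}=a_d(\R)\ltimes P^1_{d,\infty}$. Moving $a_d(t)$ past $P^1_{d,\infty}$ to the left produces the modular factor $\delta_P(a_d(t))=\prod_v|\det\mathrm{Ad}(a_{d,v}(t))|_{\mathrm{Lie}N}|_v^{n_v}$. On $\mathrm{Lie}\,N$ the adjoint action scales by $e^{t/d+t/e}=e^{nt/(de)}$ in each of the $de$ coordinates, at each of the $\kappa$ real dimensions counted with $n_v$. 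This gives $e^{\kappa nt}$. Define $\rho_j$ as the pushforward of $dk\otimes dp|_{\mathcal F_j}$, divided by the $H_{d,E}$ fiber. It is finite by Lemma~\ref{lem:finite-covolume-v4} and compactness of $\Kc_{E,\infty}$.

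\textbf{Main obstacle.} The main difficulty is the bookkeeping in (ii) and (iii). In (ii) one must check that the normalized classes are genuinely well defined on $\mathcal S_j$, that is, independent of the $H_{d,E}$ and $\Gamma_{P,j}$ ambiguities. This requires the finite-place data of $W(g)$ and $E/W(g)$ to be constant in $g\in\Gamma_j$ modulo $\Kc_d,\Kc_e$ and $\GL(K)$. In (iii) one must verify that the modular character computation is correct with the normalization $w_v$ used in $\chi_d$, so that the exponent is exactly $\kappa n$.
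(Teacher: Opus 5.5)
Your outline follows the paper's route: finite double cosets, $\Gamma_j=\mathbf G(K)\cap\gamma_j^{-1}\Kc_{E,f}\gamma_j$, a fundamental domain $\mathcal F_j\subset P^1_{d,\infty}$ for $\Gamma_{P,j}$, the decomposition $G=\Kc_{E,\infty}a_d(\R)P^1_{d,\infty}$, and the modular factor $e^{\kappa nt}$. However, the sector map you write down is wrong for $j\ne1$.

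You decompose $g_\infty=k\,a_d(t)\,p$, but the subspace is $W(g)=\gamma_jgW_0$. Hence $W(g)_\infty=\gamma_{j,\infty}k\,W_{0,\infty}$, not $kW_{0,\infty}$. Likewise, the archimedean height factor is $\prod_{v\mid\infty}\|\gamma_{j,v}k_va_{d,v}(t)p_v\,e_1\wedge\cdots\wedge e_d\|_{E,v}^{w_v}$, not the expression without $\gamma_{j,v}$. Since $\gamma_{j,\infty}$ is in general not an isometry of $E_\infty$, this factor depends on $k$. So your $t$ is not $\log H_E(W(g))$ up to a $j$-constant, and the induced archimedean norms on $W(g)$ and $E/W(g)$ are not determined by the diagonal blocks of $p$. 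Consequently (ii) fails for your $\widetilde\Phi_j$, and with it the identification of the counted set with $B_{j,Y}(\Theta,\Omega)$ in (i).

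The repair is the paper's choice: decompose $(\gamma_jg)_\infty=k\,a_d(s)\,p$, i.e.\ take $\widetilde\Phi_j(z,t)=\gamma_{j,\infty}^{-1}k\,a_d(t-\tau_j)\,p$. Now $k$ really is an isometry, and the archimedean factor is $e^{s}\chi_d(p)\prod_{v\mid\infty}\|e_1\wedge\cdots\wedge e_d\|_{E,v}^{w_v}$ with $\chi_d(p)=1$. So $\tau_j$ combines the finite norms of $\gamma_jW_0$ with the archimedean norms of $W_0$, exactly as in \eqref{eq:tau-j-v18}. Then $\pi_{j,\infty}$ depends only on $k$ and $\pi_{j,\mathcal Y}$ only on $p$. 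Part (iii) is unaffected because $dg$ is left invariant.

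There are also three smaller points.

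\emph{The definition of $\mathcal S_j$.} You cannot form $\mathcal S_j$ as a quotient of $\Kc_{E,\infty}\times\mathcal F_j$ by $H_{d,E}$. Replacing $k$ by $kh$ forces $p\mapsto a_d(-s)h^{-1}a_d(s)\,p$. This depends on $s$, since elements of $H_{d,E}$ need not commute with $a_d(s)$ when the basis is not orthogonally adapted to $W_0$ at the archimedean places, and it leaves $\mathcal F_j$. If all of $\Kc_{E,\infty}$ is allowed, a given $W$ can be hit by several $g\in\Gamma_j$, so the uniqueness in (i) is lost. The paper instead fixes a Borel section $\mathcal R\subset\Kc_{E,\infty}$ of $\Kc_{E,\infty}\to\Kc_{E,\infty}/H_{d,E}$ and sets $\mathcal S_j=\mathcal R\times\mathcal F_j$. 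Then $k$ is determined by $W_\infty$, $s$ by $\chi_d$, and $p$ by its $\Gamma_{P,j}$-coset.

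\emph{Left versus right Haar measure.} $G=\Kc_{E,\infty}P_{d,\infty}$ gives $dg=dk\,d_rp$ with the \emph{right} Haar measure of $P_{d,\infty}$, not $d_lp$. In the coordinates $p=a_d(s)u$ with $u\in P^1_{d,\infty}$, left Haar measure is $ds\,du$ with no exponential, while right Haar measure is $e^{\kappa ns}\,ds\,du$. Your value $e^{\kappa nt}$ is correct, but it enters through right Haar measure.

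\emph{Range of $\pi_{j,\mathcal Y}$.} You should check that $\pi_{j,\mathcal Y}(z)$ lies in $\cY_{E;d,e}$ for every $z$, not only for $z$ coming from lattice points. The paper does this using $\det A_v(z)\det D_v(z)=1$.
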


\begin{proof}
\smallskip
\noindent\emph{Construction.}
Choose representatives
$\gamma_1,\ldots,\gamma_J\in\mathbf G(K)$ for the finite double quotient
\eqref{eq:finite-parabolic-classes-v14}, whose existence was established
above by Borel finiteness and strong approximation, and take $\gamma_1=1$.
Put
\begin{equation} \label{Gammaj}
\Gamma_j=\mathbf G(K)\cap\gamma_j^{-1}\Kc_{E,f}\gamma_j.
\end{equation}

Since $\gamma_j\in\mathbf G(K)$,
$\gamma_j\Gamma_j\gamma_j^{-1}=\Gamma_1$.  Strong approximation and the
Tamagawa-number-one normalization give
\begin{equation}\label{eq:SLn-covolume-from-Tamagawa-v1}
 \vol(G/\Gamma_j)
 =\vol(G/\Gamma_1)
 =\frac{1}{\vol(\Kc_{E,f})}.
\end{equation}
Thus the archimedean image of every $\Gamma_j$ is a lattice in $G$.

Let $W\subset K^n$ be a $d$-dimensional $K$-subspace and choose
$\gamma\in\mathbf G(K)$ with $W=\gamma W_0$.  The double coset of
$\gamma$ in \eqref{eq:finite-parabolic-classes-v14} determines a unique $j$.
Multiplying $\gamma$ on the right by an element of $\mathbf P_d(K)$ does
not change $W$, and we may arrange
$\gamma\in\Kc_{E,f}\gamma_j$ at the finite places.  Hence
$\gamma=\gamma_jg$ for some $g\in\Gamma_j$.  Conversely, every
$g\in\Gamma_j$ gives the subspace $\gamma_jgW_0$.

Recall that $\mathbf P_d$ has the block form
\eqref{eq:parabolic-block-v18}, and that $\chi_d$, $P_{d,\infty}^1$, and
$a_d(t)$ are defined in \eqref{eq:parabolic-height-character-early-v4} and
\eqref{ad}, with $\chi_d(a_d(t))=e^t$.
For $g_1,g_2\in\Gamma_j$, the equality
$\gamma_jg_1W_0=\gamma_jg_2W_0$ is equivalent to
$g_2^{-1}g_1\in\mathbf P_d(K)$.  Since $g_2^{-1}g_1\in\Gamma_j$, we obtain
\begin{equation}\label{eq:same-plane-action-v19}
 W(g_1)=W(g_2)
 \iff
 g_2^{-1}g_1\in\mathbf P_d(K)\cap\Gamma_j.
\end{equation}
With the notation of the proposition,
$\Gamma_{P,j}=\mathbf P_d(K)\cap\Gamma_j$.  For $p\in\Gamma_{P,j}$,
each finite component $p_v$ lies in the compact subgroup
$\mathbf P_d(K_v)\cap\gamma_j^{-1}\Kc_{E,v}\gamma_j$.  Hence the finite
idelic norm of $\vartheta(p_f)$ is $1$, and the product formula gives
$\chi_d(p_\infty)=1$.  Therefore $\Gamma_{P,j}\subset P_{d,\infty}^1$.

Apply Lemma~\ref{lem:finite-covolume-v4} to the compact open subgroup
\begin{equation}\label{eq:s2-d043-v33}
 C_f
 =\mathbf P_d(\A_{K,f})\cap
   \gamma_j^{-1}\Kc_{E,f}\gamma_j.
\end{equation}
Its rational intersection is
\begin{equation}\label{eq:s2-d044-v33}
 \mathbf P_d(K)\cap C_f
 =\mathbf P_d(K)\cap\Gamma_j
 =\Gamma_{P,j}.
\end{equation}
Consequently $P_{d,\infty}^1/\Gamma_{P,j}$ has finite volume.  Choose a
Borel fundamental domain
\begin{equation}\label{eq:s2-d045-v33}
 \mathcal F_j\subset P_{d,\infty}^1
 \qquad\text{for}\qquad
 P_{d,\infty}^1/\Gamma_{P,j},
\end{equation}
meeting every right coset in exactly one point.  
The orbit map
\begin{equation}\label{eq:s2-d046-v33}
 \Kc_{E,\infty}/H_{d,E}
 \longrightarrow \Gr_{d,\infty}(E),
 \qquad
 kH_{d,E}\longmapsto(k_vW_{0,v})_{v\mid\infty}
\end{equation}
is a bijection by the transitivity of $\Kc_{E,v}$ on $\Gr_d(E_v)$ noted
above and the definition of $H_{d,E}$ as the stabilizer of $W_0$.
Choose a Borel section of
$\Kc_{E,\infty}\to\Kc_{E,\infty}/H_{d,E}$ and let
\begin{equation}\label{eq:s2-d047-v33}
 \mathcal R\subset \Kc_{E,\infty}
\end{equation}
be its image.  Define
\begin{equation}\label{eq:sector-space-v18}
 \mathcal S_j=\mathcal R\times\mathcal F_j,
 \qquad
 z=(k,p),\quad k\in\mathcal R,\quad p\in\mathcal F_j.
\end{equation}

\smallskip
\noindent\emph{Proof of \textup{(ii)} and completion of \textup{(i)}.}
Fix $j$ and put
\begin{equation}\label{eq:s2-d049-v33}
 W_j=\gamma_jW_0,\qquad Q_j=E/W_j.
\end{equation}
On $Q_j=E/W_j$, bars below denote classes modulo $W_j$.  We use the bases
\begin{equation}\label{eq:s2-d050-v33}
 (\gamma_je_1,\ldots,\gamma_je_d)\quad\text{on }W_j,
 \qquad
 (\overline{\gamma_je_{d+1}},\ldots,
  \overline{\gamma_je_n})\quad\text{on }Q_j.
\end{equation}
For $g\in\Gamma_j$, the $K$-linear map
$\gamma_jg\gamma_j^{-1}$ sends $W_j$ onto
$W(g)=\gamma_jgW_0$ and induces $Q_j\simeq E/W(g)$.
Transport the chosen bases by these maps; the matrices below represent the
local norms in the resulting global $K$-bases.

For every finite place $v$, choose
\begin{equation}\label{eq:s2-d051-v33}
 A_v\in\GL_d(K_v),\qquad D_v\in\GL_e(K_v)
\end{equation}
so that
\begin{equation}\label{eq:s2-d052-v33}
 \|x\|_{(W_j)_v}=\|A_vx\|_{d,v},
 \qquad
 \|y\|_{(Q_j)_v}=\|D_vy\|_{e,v}.
\end{equation}
For all but finitely many finite places these matrices lie in
$\GL_d(\cO_v)$ and $\GL_e(\cO_v)$, respectively.  Hence
$(A_v)_{v\nmid\infty}\in\GL_d(\A_{K,f})$ and
$(D_v)_{v\nmid\infty}\in\GL_e(\A_{K,f})$.  The local representatives
are unique up to the corresponding
isometry groups, while changing a global $K$-basis right-multiplies all local
matrices by the same matrix in $\GL_d(K)$ or $\GL_e(K)$.  Hence the classes
in $\cX_{K,d}$ and $\cX_{K,e}$ defined below are well defined.

For $g\in\Gamma_j$, at the finite places
\begin{equation}\label{eq:s2-d054-v33}
 \gamma_j g=k_f\gamma_j,
 \qquad k_f\in \Kc_{E,f}.
\end{equation}
Hence
\begin{equation}\label{eq:s2-d055-v33}
 W(g)=\gamma_j gW_0=k_fW_j,
\end{equation}
and the quotient is likewise obtained from $Q_j$ by the isometry induced by
$k_f$.  Thus, up to the relevant isometry groups, the finite local
norms on the subspace and quotient are represented by
$(A_v)_{v\nmid\infty}$ and $(D_v)_{v\nmid\infty}$.

At each archimedean place, fix matrices
\begin{equation}\label{eq:s2-d056-v33}
 B_{d,v}\in\GL_d(K_v),\qquad
 B_{e,v}\in\GL_e(K_v)
\end{equation}
representing, in the bases induced by
$e_1,\ldots,e_n$, the restricted norm on $W_{0,v}$ and the quotient norm
on $E_v/W_{0,v}$:
\begin{equation}\label{eq:s2-d057-v33}
 \|x\|_{W_{0,v}}
 =\|B_{d,v}x\|_{d,v},
 \qquad
 \|y\|_{E_v/W_{0,v}}
 =\|B_{e,v}y\|_{e,v}.
\end{equation}
For $z=(k,p)\in\mathcal S_j$, write the diagonal blocks of
$p_v$ as $A_v(z)$ and $D_v(z)$ for $v\mid\infty$.  Combine them with
the fixed finite representatives by
\begin{equation}\label{eq:Az-Dz-v18}
 A(z)=
 \bigl((A_v)_{v\nmid\infty},(B_{d,v}A_v(z))_{v\mid\infty}\bigr)
 \in\GL_d(\A_K),
 \qquad
 D(z)=
 \bigl((D_v)_{v\nmid\infty},(B_{e,v}D_v(z))_{v\mid\infty}\bigr)
 \in\GL_e(\A_K).
\end{equation}
These are adelic matrix representatives of the two induced adelic structures in
the chosen global bases.  Changing a local rigid representative multiplies
them on the left by $\Kc_d$ or $\Kc_e$, while changing the global basis
multiplies them on the right by $\GL_d(K)$ or $\GL_e(K)$.

Since $p\in P_{d,\infty}^1=\ker\chi_d$, the product of the
archimedean determinant factors of its upper-left block is $1$.
The fixed matrices representing the local norms contribute a factor
independent of $z$.  Hence the following quantity is independent of
$z\in\mathcal S_j$:
\begin{equation}\label{eq:tau-j-v18}
 \tau_j:=\frac1\kappa\log|\det A(z)|_{\A_K}
 =\frac1\kappa\log\left(
 \prod_{v\nmid\infty}|\det A_v|_v^{n_v}
 \prod_{v\mid\infty}|\det B_{d,v}|_v^{n_v}\right).
\end{equation}
The determinant isometry for the exact
sequence $0\to W_j\to E\to Q_j\to0$, transported by the determinant-one
matrices used above, gives
\begin{equation}\label{eq:s2-d061-v33}
 |\det A(z)|_{\A_K}\,|\det D(z)|_{\A_K}=H(E)^\kappa=1.
\end{equation}
Hence $|\det D(z)|_{\A_K}=e^{-\kappa\tau_j}$.  Let
$\widehat A(z)$ and $\widehat D(z)$ be obtained from $A(z)$ and $D(z)$ by
multiplying their archimedean components by $e^{-\tau_j/d}$ and
$e^{\tau_j/e}$, respectively.  Both determinant idelic norms are then $1$.
Define
\begin{equation}\label{eq:pi-jY-infty-v18}
 \pi_{j,\mathcal Y}(z)
 =\bigl(
   \Kc_d\widehat A(z)\GL_d(K),
   \Kc_e\widehat D(z)\GL_e(K)
  \bigr),
 \qquad
 \pi_{j,\infty}(z)=(k_v(z)W_{0,v})_{v\mid\infty}.
\end{equation}
For every $z\in \mathcal S_j$, the normalizing scalars contribute inverse
determinant factors, while the diagonal blocks satisfy
$\det A_v(z)\det D_v(z)=1$ at every archimedean place.  The finite block
determinants together with the determinants of the fixed
archimedean representatives $B_{d,v}$ and $B_{e,v}$ represent the induced norm
on $\det E$ in the determinant-one bases fixed above.  Therefore the two
components of \(\pi_{j,\mathcal Y}(z)\) have determinant classes whose product
is \(c_E\), so
\begin{equation}\label{eq:s2-d063-v33}
 \pi_{j,\mathcal Y}(z)\in\cY_{E;d,e}.
\end{equation}

By Northcott's theorem for the Pl\"ucker height
\cite{BombieriGubler}*{Theorem~2.4.9}, the set of $d$-dimensional
$K$-subspaces of bounded $H_E$-height is finite.  Hence the positive values
$H_E(W)$ have a minimum.  Choose
\begin{equation}\label{eq:t0-v18}
 t_0<\min_{\dim_KW=d}\log H_E(W).
\end{equation}
Then every $d$-dimensional $K$-subspace satisfies $\log H_E(W)>t_0$.

Define
\begin{equation}\label{eq:Phi-tilde-v18}
 \widetilde\Phi_j(z,t)
 =\gamma_{j,\infty}^{-1}
  k\,a_d\bigl(t-\tau_j\bigr)\,p,
 \qquad z=(k,p)\in\mathcal S_j.
\end{equation}
Let $g\in\Gamma_j$ have archimedean image
$\widetilde\Phi_j(z,t)$.  Then $(\gamma_jg)_\infty=k\,a_d(t-\tau_j)p$.
Since $a_d(t-\tau_j)$ and $p$ stabilize $W_0$, we have
$W(g)_v=k_vW_{0,v}$ at every archimedean place, hence
$W(g)_\infty=\pi_{j,\infty}(z)$.

The upper-right block of $p$ acts trivially on $W_0$, and the diagonal
block on $W_{0,v}$ is $A_v(z)$.  Thus the induced adelic structure on $W(g)$
is the height-one structure represented by $\widehat A(z)$, with its
archimedean norms scaled by $e^{t/d}$.  Hence
\begin{equation}\label{eq:s2-d067-v33}
 H_E(W(g))=e^t.
\end{equation}
After rescaling by $e^{-t/d}$, we obtain
\begin{equation}\label{eq:s2-d068-v33}
 [\widehat{W(g)}]
 =\Kc_d\widehat A(z)\GL_d(K),
\end{equation}
which is the first component of \(\pi_{j,\mathcal Y}(z)\).

The quotient structure is the height-one structure represented by
$\widehat D(z)$, with its archimedean norms scaled by $e^{-t/e}$.  By
\cite{Gaudron}*{Proposition~5} and $H(E)=1$, its height is $e^{-t}$.
After rescaling the quotient by $e^{t/e}$, we obtain
\begin{equation}\label{eq:s2-d070-v33}
 [\widehat{E/W(g)}]
 =\Kc_e\widehat D(z)\GL_e(K),
\end{equation}
which is the second component of \(\pi_{j,\mathcal Y}(z)\).
Thus \eqref{eq:sector-invariants-v14} holds.

Let $W$ satisfy the conditions on the left side of
\eqref{eq:sector-count-v4}.  The finite reduction above gives a unique
$j$ and a representative $W=\gamma_jgW_0$ with $g\in\Gamma_j$.
The point $W_\infty$ determines a unique $k\in\mathcal R$ with
\[
 ((\gamma_jg)_vW_{0,v})_{v\mid\infty}=(k_vW_{0,v})_{v\mid\infty}.
\]
Since
$k^{-1}(\gamma_jg)_\infty\in P_{d,\infty}$, define
\begin{equation}\label{eq:reverse-sector-t-v45}
 t=\tau_j+\log\chi_d\!\left(k^{-1}(\gamma_jg)_\infty\right).
\end{equation}
Then
$a_d(\tau_j-t)k^{-1}(\gamma_jg)_\infty\in P_{d,\infty}^1$.
Changing $g$ on the right by an element of $\Gamma_{P,j}$ changes this
element within its right coset.  Let $p\in\mathcal F_j$ be the unique
representative of that coset.  With $z=(k,p)$, the archimedean image of
$g$ is
\begin{equation}\label{eq:s2-d077-v33}
 \gamma_{j,\infty}^{-1}k
 a_d\bigl(t-\tau_j\bigr)p
 =\widetilde\Phi_j(z,t).
\end{equation}
The identities already proved give
\begin{equation}\label{eq:s2-d078-v33}
 t=\log H_E(W),\qquad
 \pi_{j,\infty}(z)=W_\infty,\qquad
 \pi_{j,\mathcal Y}(z)=([\widehat W],[\widehat{E/W}]).
\end{equation}
Hence $g\in\Gamma_j\cap B_{j,Y}(\Theta,\Omega)$.

Conversely, every such $g$ defines $W(g)=\gamma_jgW_0$ satisfying the three
conditions.  For uniqueness, suppose
$g_1,g_2\in\Gamma_j\cap B_{j,Y}(\Theta,\Omega)$ give the same subspace.
Then $\gamma=g_2^{-1}g_1\in\Gamma_{P,j}$.  Equality of the archimedean
Grassmannian components and the choice of $\mathcal R$ give $k_1=k_2$.
Writing $(\gamma_jg_i)_\infty=k_i a_d(s_i)p_i$, we obtain
$a_d(s_1)p_1=a_d(s_2)p_2\gamma$.  Applying $\chi_d$ gives $s_1=s_2$ and
$p_1=p_2\gamma$.  Since $p_1,p_2\in\mathcal F_j$ represent the same right
coset, $p_1=p_2$, hence $\gamma=1$ and $g_1=g_2$.  This proves
\eqref{eq:sector-count-v4}.

\smallskip
\noindent\emph{Proof of \textup{(iii)}.}
The compact group
$\Kc_{E,\infty}$ is transitive on $G/P_{d,\infty}$, and
$H_{d,E}=\Kc_{E,\infty}\cap P_{d,\infty}$ lies in $P_{d,\infty}^1$.  Thus
\begin{equation}\label{eq:s2-d082-v33}
 G=\Kc_{E,\infty}a_d(\R)P_{d,\infty}^1.
\end{equation}
Conjugation by $a_d(s)$ on $N_{d,\infty}$ multiplies each matrix entry by
$e^{ns/(de)}$.  Taking the real Jacobian over all archimedean places gives
$e^{\kappa ns}$.  Hence the modular character of $P_{d,\infty}$ is
$\chi_d^{\kappa n}$, so $P_{d,\infty}^1$ is unimodular.  By the generalized Iwasawa integration
formula (compare \cite{NakamuraWatanabe}*{\S1.2}), there is a unique Haar
measure $dp^1$ on $P_{d,\infty}^1$ such that
\begin{equation}\label{eq:parabolic-integration-v41}
 \int_G F(g)\,dg
 =\int_{\Kc_{E,\infty}/H_{d,E}}\int_{\mathbb R}\int_{P_{d,\infty}^1}
 F\bigl(k a_d(s)u\bigr)e^{\kappa ns}\,dp^1(u)\,ds\,
 d\sigma_{E,d}(kH_{d,E})
\end{equation}
for every $F\in C_c(G)$.  The scalar of $dp^1$ is thus fixed by $dg$,
Lebesgue measure $ds$, and the probability measure $\sigma_{E,d}$.

Equip $\Gamma_{P,j}$ with counting measure.  Weil's formula
\cite{Weil}*{Chapter~II} defines the finite quotient measure $d\bar p_j$ on
$P_{d,\infty}^1/\Gamma_{P,j}$.  Through the chosen representatives,
\begin{equation}\label{eq:full-sector-product-v4}
 \mathcal S_j\simeq
 (\Kc_{E,\infty}/H_{d,E})\times(P_{d,\infty}^1/\Gamma_{P,j}).
\end{equation}
Combining \eqref{eq:parabolic-integration-v41} with Weil's formula gives
the quotient measure
$e^{\kappa ns}d\sigma_{E,d}d\bar p_jds$.  Since
$\widetilde\Phi_j(z,t)$ uses $s=t-\tau_j$, put
\begin{equation}\label{eq:rho-j-v18}
 d\rho_j=e^{-\kappa n\tau_j}
 d\sigma_{E,d}\otimes d\bar p_j.
\end{equation}
Then $\rho_j$ is finite, and \eqref{eq:sector-haar-product-v14} follows.

\end{proof}

\subsection{Volume formula}\label{subsec:sector-volume}

For $1\le j\le J$, let $d\bar g_j$ denote the Weil quotient measure on
$G/\Gamma_{P,j}$ determined by $dg$ and counting measure on
$\Gamma_{P,j}$.  Under the product identification
\eqref{eq:full-sector-product-v4}, the measure $\rho_j$ is the one defined in
\eqref{eq:rho-j-v18}.  Moreover,
\begin{equation}\label{eq:s2-d087-v33}
 (z,t)\longmapsto
 \gamma_{j,\infty}\widetilde\Phi_j(z,t)\Gamma_{P,j}
\end{equation}
is a Borel isomorphism from $\mathcal S_j\times\mathbb R$ onto
$G/\Gamma_{P,j}$, and for every nonnegative Borel function $f$,
\begin{equation}\label{eq:full-sector-quotient-v4}
 \int_{G/\Gamma_{P,j}}f(g\Gamma_{P,j})\,d\bar g_j
 =
 \int_{\mathcal S_j}\int_\R
 f\!\left(\gamma_{j,\infty}\widetilde\Phi_j(z,t)
          \Gamma_{P,j}\right)e^{\kappa nt}\,dt\,d\rho_j(z).
\end{equation}
By the choice of representatives, the elements
$\gamma_{j,\infty}\widetilde\Phi_j(z,t)=k a_d(t-\tau_j)p$, with
$(z,t)\in\mathcal S_j\times\mathbb R$, form a Borel fundamental domain for
the right action of $\Gamma_{P,j}$ on $G$.  The quotient formula follows from
\eqref{eq:parabolic-integration-v41}, Weil's quotient formula
\cite{Weil}*{Chapter~II}, and the change of variable $s=t-\tau_j$.

The map $\widetilde\Phi_j$ is injective.  If
$\widetilde\Phi_j(z_1,t_1)=\widetilde\Phi_j(z_2,t_2)$, with
$z_i=(k_i,p_i)$, then $k_2^{-1}k_1\in P_{d,\infty}$.  The choice of the
section $\mathcal R$ therefore gives $k_1=k_2$.  Applying $\chi_d$ to the
remaining equality gives $t_1=t_2$, and then the choice of the fundamental
domain $\mathcal F_j$ gives $p_1=p_2$.  Thus $z_1=z_2$ and $t_1=t_2$.
Since $\widetilde\Phi_j$ is therefore a Borel injection between standard
Borel spaces, the Lusin--Souslin theorem shows that the sectors
$B_{j,Y}(\Theta,\Omega)$ are Borel.

Define the finite Borel measure
\begin{equation}\label{eq:eta-parabolic-v4}
 \eta_E
 =\vol(\Kc_{E,f})\sum_{j=1}^J(\pi_{j,\mathcal Y})_*\rho_j.
\end{equation}
Recall that $\nu_{E,d}$ is the probability measure on
$\cY_{E;d,e}$ defined in Section~\ref{subsec:parameter-spaces}, and that
$\mathfrak a_{K,n,d}$ is defined in \eqref{a}.

\begin{proposition}
\label{prop:parabolic-mass-W-v4}
Assume \(H(E)=1\).  Then
\begin{equation}\label{eq:parabolic-mass-W-v4}
 \eta_E=\kappa n\,\mathfrak a_{K,n,d}\,\nu_{E,d}.
\end{equation}
\end{proposition}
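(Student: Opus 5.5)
We split the identity $\eta_E=\kappa n\,\mathfrak a_{K,n,d}\,\nu_{E,d}$ into two parts: the \emph{shape} of $\eta_E$, and its \emph{total mass}.

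For the shape, we show that $\eta_E$ is proportional to $\nu_{E,d}$. For the total mass, we show that
\[
 \eta_E(\cY_{E;d,e})=\vol(\Kc_{E,f})\sum_j\rho_j(\mathcal S_j)=\kappa n\,\mathfrak a_{K,n,d}.
\]
The mass computation is the gauge-form calculation deferred to Appendix~\ref{app:gauge-form}.

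\textbf{Adelic reinterpretation.} By \eqref{eq:rho-j-v18}, $\rho_j(\mathcal S_j)=e^{-\kappa n\tau_j}\vol(P^1_{d,\infty}/\Gamma_{P,j})$. The factor $\vol(\Kc_{E,f})$ converts the finite sum over sectors into an integral over a single adelic quotient. More precisely, by \eqref{eq:finite-parabolic-classes-v14} and the choice $\Gamma_j=\mathbf G(K)\cap\gamma_j^{-1}\Kc_{E,f}\gamma_j$, the disjoint union of the spaces
\[
 \Kc_{E,f}\gamma_j\,\mathbf P_d(\A_{K,f})\cdot P^1_{d,\infty}/\mathbf P_d(K)
\]
is identified with
\[
 \mathbf P_d(\A_K)^1/\mathbf P_d(K)
 \quad\text{modulo }\Kc_{E,f}\text{ on the left},
\]
where $\mathbf P_d(\A_K)^1=\ker|\vartheta|_{\A_K}$. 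This is the usual unfolding $\mathbf G(\A_{K,f})=\bigsqcup_j\Kc_{E,f}\gamma_j\mathbf P_d(\A_{K,f})$, up to the twist of the $j$-th piece by $\gamma_j$. The exponential weights $e^{-\kappa n\tau_j}$ appear because $\tau_j$ is exactly the logarithmic height of the finite part $|\det A|_{\A_K,f}$ attached to the $j$-th class. We therefore rewrite
\[
 \eta_E=\int_{\Kc_{E,f}\backslash \mathbf G(\A_{K,f})\times P_{d,\infty}^1/\mathbf P_d(K)}
 \delta_{\pi_{\mathcal Y}}\cdot|\vartheta|_{f}^{\kappa n}\,dm,
\]
with the weight $|\vartheta|_{f}^{\kappa n}$ coming from $e^{-\kappa n\tau_j}$. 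The finite measure $dm$ is built from the Tamagawa measure through an Iwasawa decomposition $\mathbf G(\A_{K,f})=\Kc_{E,f}\mathbf P_d(\A_{K,f})$.

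\textbf{Shape.} Use the Levi projection $\pi_M$. The unipotent radical quotient $N_d(\A_K)/N_d(K)$ is compact and is integrated out, contributing only a constant. What remains is a measure on
\[
 \mathbf M_d(\A_K)^1/\mathbf M_d(K)
 \simeq\{(A,D)\in\GL_d(\A_K)\times\GL_e(\A_K):|\det A|=|\det D|^{-1}\}/(\GL_d(K)\times\GL_e(K)),
\]
subject to the determinant constraint $\det A\det D=1$. It is taken modulo $\Kc_d\times\Kc_e$ after rescaling to height one. This measure is invariant under $\GL_d(\A_K)^1\times\GL_e(\A_K)^1$ acting through the determinant-compatible subgroup. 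Uniqueness of invariant measures on the fibers of the determinant-class map then identifies its pushforward as $\mu_{K,d,c}\otimes\mu_{K,e,c_Ec^{-1}}$, integrated against a measure on $\overline C^1_K$. That measure on $\overline C^1_K$ must be Haar, because of the transitive action of $\A_K^1$ through $A\mapsto aA$ and $D\mapsto a^{-1}D$ at height level. This yields proportionality to $\nu_{E,d}$, as given in \eqref{nu}. One must check that the constraint $\det A_v\det D_v=1$ and the fixed class $c_E$ match \eqref{eq:s2-d019-v33}; this check was already made in the proof of Proposition~\ref{prop:sector-decomposition-v4}.

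\textbf{Mass (the main obstacle).} The total mass should be computed by unfolding the Tamagawa-one identity $\vol(\mathbf G(\A_K)/\mathbf G(K))=1$ against the Siegel--Eisenstein series attached to $\mathbf P_d$ and the character $|\vartheta|^{s}$. Its residue at $s=\kappa n$ gives $\vol(\mathbf P_d(\A_K)^1/\mathbf P_d(K))$ in terms of:
\begin{itemize}
\item the Tamagawa volumes of $\SL_d$ and $\SL_e$, both equal to $1$;
\item the residue $h_KR_K/w_K$ of the idelic class-group volume, together with $\Delta_K$ factors from the covolume of $N_d$;
\item the local volumes of the compact groups $\Kc_{n,v}$ relative to those of the Levi blocks. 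At finite places these are ratios of $\prod\zeta_{K,v}(j)^{-1}$, which produce $\mathcal Z_{K,n,d}^{-1}$. At archimedean places they are Grassmannian volumes, which give $\binom nd\mathcal V_{n,d}$ (or $\mathcal V_{n,d}^{(2)}$), using $\vol(S^{j-1})=jV_j$.
\end{itemize}
The delicate point is tracking every normalization consistently: the $\Delta_K^{-(de+1)}$ factor, the factor $\kappa n$ coming from the $e^{\kappa nt}$ Jacobian versus the $1/n$ in $\mathfrak a_{K,n,d}$, and the normalization $\vol(\Kc_{E,f})$. To do this, we first carry out the computation for $E=K^n$ standard, where $\Kc_{E,f}=\prod_v\SL_n(\cO_v)$, $J=h_K$-related and $\tau_j$ is explicit. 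We then reduce general $E$ with $H(E)=1$ to this case, using invariance of the total mass under change of rigid structure; the $\vol(\Kc_{E,f})$ prefactor compensates exactly.
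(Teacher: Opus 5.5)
Your outline has the same architecture as the paper's appendix: pass from the $J$ sectors to $\mathbf P_d(\A_K)^1/\mathbf P_d(K)$, integrate out $\mathbf N_d$, disintegrate the Levi quotient over $\overline C_K^1$, and get the volume from Tamagawa numbers and local Iwasawa constants. However, the two steps that carry the content are asserted rather than proved, and parts of them are misstated.

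\emph{Unfolding and shape.} Your domain $\Kc_{E,f}\backslash\mathbf G(\A_{K,f})\times P^1_{d,\infty}/\mathbf P_d(K)$ is not defined. Right multiplication by $\gamma\in\mathbf P_d(K)$ does not preserve $P^1_{d,\infty}$, since $\chi_d(\gamma_\infty)=|\operatorname{Nm}_{K/\Q}(\det A_\gamma)|^{1/\kappa}$ ($A_\gamma$ the upper-left block) is not $1$ in general. Also, $\mathbf G(\A_{K,f})=\bigsqcup_j\Kc_{E,f}\gamma_j\mathbf P_d(\A_{K,f})$ is false for $J>1$. By the Iwasawa decomposition $\mathbf G(\A_{K,f})=\Kc_{E,f}\mathbf P_d(\A_{K,f})$ that you yourself invoke, there is only one such double coset; the sectors are indexed by $\Kc_{E,f}\backslash\mathbf G(\A_{K,f})/\mathbf P_d(K)$. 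What the argument needs is the identity
\[
 \eta_E=e^{-\kappa n\tau_0}\,(\pi_{E,\mathcal Y})_*\,d\bar p_E,
 \qquad \tau_0=\log H_E(W_0),
\]
where $d\bar p_E$ is the quotient of the Haar measure on $\mathbf P_d(\A_K)^1$ fixed by the global Iwasawa formula for $\Kc_E=\prod_v\Kc_{E,v}$. The paper proves it by evaluating $\int_{X_d}\psi(t_E)\phi(\pi_{E,\infty})\xi(\pi_{E,\mathcal Y})\,d\bar g$ on $X_d=\mathbf G(\A_K)/\mathbf P_d(K)$ in two ways. The first uses the finite-class unfolding (this is where $\vol(\Kc_{E,f})$ enters) together with the sector quotient formula. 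The second uses $g=ka_d(s)u$, with $t_E(ka_d(s)u)=s+\tau_0$ and $\pi_{E,\mathcal Y}(ka_d(s)u)=\pi_{E,\mathcal Y}(u)$. Both your shape and mass steps rest on this identity.

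In the shape step, transitivity on the base must come from $\iota(a)=(r_d(a),r_e(a^{-1}))$. Your scalar action $A\mapsto aA$, $D\mapsto a^{-1}D$ leaves $\mathbf M_d$ when $d\ne e$, and it moves the first determinant class by $a^d$, which in general does not force Haar measure on $\overline C_K^1$. Moreover, the fibers of $\overline{\det}_d$ in $\cX_{K,d}$ are not orbits of any group acting on $\cX_{K,d}$, because the left quotient by $\Kc_d$ kills the left action. So uniqueness must be applied on $\GL_d(\A_K)^1/\GL_d(K)$, and the result matched with $\mu_{K,d,c}$ by showing independence of the lift of $c$ to $\A_K^1$, as the paper does.

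\emph{Mass.} No Eisenstein series is needed. The ingredients you list are those of the direct computation the paper carries out, and the entire content lies in the normalization you leave open. One must express $dp_0^1$ in the coordinates $\Phi_P(z,X,s_d,s_e)$:
\begin{itemize}
\item the local Iwasawa constants are ratios of $\prod_j(1-q_v^{-j})$ at finite places (giving $\mathcal Z_{K,n,d}^{-1}$), and $\binom nd\mathcal V_{n,d}$, resp.\ $\binom nd\mathcal V^{(2)}_{n,d}$, at real, resp.\ complex, places (Jacobian $1$ in Chevalley coordinates, the incidence recursion on Grassmannians, and no $m=1$ factor because the sign or phase of $z$ lies in $d^\times z_v$);
\item the Tamagawa normalizations contribute $\Delta_K^{de+(d^2-1)+(e^2-1)-(n^2-1)}=\Delta_K^{-(de+1)}$;
\item Weil's formula is then applied along $\mathbf P_d(\A_K)^1\to\mathbf M_d(\A_K)^1\xrightarrow{\vartheta}\A_K^1$, with fibers $\mathbf N_d(\A_K)$ and $\SL_d(\A_K)\times\SL_e(\A_K)$ of Tamagawa number one.
\end{itemize}
The class-group volume that enters is $\kappa h_KR_K/w_K$, for the measure on $\A_K^1$ disintegrated against the same $dt$ that parametrizes $a_d(t)$. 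With your value $h_KR_K/w_K$ you would obtain $n\,\mathfrak a_{K,n,d}$. The missing $\kappa$ is the Jacobian of $t\mapsto(e^t)_{v\mid\infty}$, whose idelic norm is $e^{\kappa t}$; it is the $\kappa$ in the $\kappa n$ you flag as delicate.

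Finally, a general $E$ cannot be moved to $K^n$ inside $\mathbf G(\A_K)$, because translation by $\mathbf G(\A_K)$ preserves $c_E$. The paper therefore passes through $E_b$ with $b=r_n(\det g_E)$. It obtains $\eta_E=\eta_{E_b}$ from left translation by $b^{-1}g_E\in\mathbf G(\A_K)$, and checks $(\operatorname{Ad}b)_*dp_0^1=dp_0^1$. Your claim that the $\vol(\Kc_{E,f})$ prefactor ``compensates exactly'' must be replaced by that argument.
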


The proof is given in Appendix~\ref{app:gauge-form}.

Recall from Proposition~\ref{prop:sector-decomposition-v4} the finite set of
lattices $\Gamma_1,\ldots,\Gamma_J$, the cutoff $t_0$, and the sectors
$B_{j,Y}(\Theta,\Omega)$ defined in \eqref{eq:sector-definition-v14}.
Recall also $G$ from \eqref{eq:sector-recall-v18}; the measures
$\sigma_{E,d}$ and $\nu_{E,d}$ are the probability measures on
$\Gr_{d,\infty}(E)$ and $\cY_{E;d,e}$, respectively, and
$\mathfrak a_{K,n,d}$ is defined in \eqref{a}.

\begin{proposition}
\label{prop:sector-volume-v4}
Assume \(H(E)=1\).  Let \(\Theta\subset\Gr_{d,\infty}(E)\) and
\(\Omega\subset\cY_{E;d,e}\) be Borel sets.  Then, for every
\(Y\ge e^{t_0}\),
\begin{equation}\label{eq:sector-volume-v4}
 \sum_{j=1}^J
 \frac{\vol(B_{j,Y}(\Theta,\Omega))}
      {\vol(G/\Gamma_j)}
 =
 \mathfrak a_{K,n,d}
 \sigma_{E,d}(\Theta)\nu_{E,d}(\Omega)
 \bigl(Y^{\kappa n}-e^{\kappa nt_0}\bigr).
\end{equation}
\end{proposition}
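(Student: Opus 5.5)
The plan is to compute each volume directly from the product decomposition of Haar measure in Proposition~\ref{prop:sector-decomposition-v4}(iii), and then to assemble the constant using Proposition~\ref{prop:parabolic-mass-W-v4}.

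\emph{Step 1: volume of a single sector.} Fix $j$. Since $\widetilde\Phi_j$ is a Borel injection, and by \eqref{eq:sector-haar-product-v14} together with the quotient formula \eqref{eq:full-sector-quotient-v4}, the Haar volume of $B_{j,Y}(\Theta,\Omega)$ in $G$ is the integral of $e^{\kappa nt}$ over the set of $(z,t)\in\mathcal S_j\times[t_0,\log Y]$ with $\pi_{j,\infty}(z)\in\Theta$ and $\pi_{j,\mathcal Y}(z)\in\Omega$. Here one should check that the left translation by $\gamma_{j,\infty}^{-1}$ in the definition of $\widetilde\Phi_j$ does not affect Haar measure; this holds because $G$ is unimodular. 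Integrating out $t$ gives
\[
 \vol(B_{j,Y}(\Theta,\Omega))=\frac{Y^{\kappa n}-e^{\kappa nt_0}}{\kappa n}\,
 \rho_j\bigl(\pi_{j,\infty}^{-1}(\Theta)\cap\pi_{j,\mathcal Y}^{-1}(\Omega)\bigr).
\]

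\emph{Step 2: splitting $\rho_j$.} By \eqref{eq:rho-j-v18} and the identification \eqref{eq:full-sector-product-v4}, we have $\rho_j=e^{-\kappa n\tau_j}\sigma_{E,d}\otimes\bar p_j$. Moreover, $\pi_{j,\infty}(k,p)=(k_vW_{0,v})_v$ depends only on $k$, while $\pi_{j,\mathcal Y}(k,p)$ depends only on $p$, since $\widehat A(z)$ and $\widehat D(z)$ are built from the diagonal blocks of $p$. The set in Step~1 is therefore a product, and its measure factors as
\[
 \sigma_{E,d}(\Theta)\cdot e^{-\kappa n\tau_j}\,\bar p_j\bigl(\pi_{j,\mathcal Y}^{-1}(\Omega)\bigr)
 =\sigma_{E,d}(\Theta)\,\bigl((\pi_{j,\mathcal Y})_*\rho_j\bigr)(\Omega),
\]
because $\sigma_{E,d}$ is a probability measure. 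The point to verify carefully is that the pushforward $(\pi_{j,\mathcal Y})_*\rho_j$, as it appears in \eqref{eq:eta-parabolic-v4}, is exactly $e^{-\kappa n\tau_j}$ times the pushforward of $\bar p_j$, with the $\sigma_{E,d}$ factor integrating to one.

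\emph{Step 3: summing over $j$.} By \eqref{eq:SLn-covolume-from-Tamagawa-v1}, $\vol(G/\Gamma_j)=\vol(\Kc_{E,f})^{-1}$ for every $j$. Dividing and summing therefore gives
\[
 \frac{Y^{\kappa n}-e^{\kappa nt_0}}{\kappa n}\,\sigma_{E,d}(\Theta)\,
 \vol(\Kc_{E,f})\sum_j(\pi_{j,\mathcal Y})_*\rho_j(\Omega)
 =\frac{Y^{\kappa n}-e^{\kappa nt_0}}{\kappa n}\,\sigma_{E,d}(\Theta)\,\eta_E(\Omega).
\]
Proposition~\ref{prop:parabolic-mass-W-v4} then replaces $\eta_E$ by $\kappa n\,\mathfrak a_{K,n,d}\,\nu_{E,d}$, and the factors of $\kappa n$ cancel, yielding \eqref{eq:sector-volume-v4}.

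\emph{Main obstacle.} The genuine content lies in Proposition~\ref{prop:parabolic-mass-W-v4}, which is deferred to the appendix. Within this argument, the only delicate step is the measure-theoretic justification of Step~1. This requires confirming that the Haar volume of the image of $\widetilde\Phi_j$ restricted to the sector is computed by \eqref{eq:full-sector-quotient-v4}. That identity is stated on $G/\Gamma_{P,j}$, but it transfers to $G$ because the images $k\,a_d(t-\tau_j)p$ form a fundamental domain for the right action of $\Gamma_{P,j}$, so the quotient integral of an indicator supported on that domain equals the Haar measure on $G$. Borel measurability of the sectors is supplied by the Lusin--Souslin argument.
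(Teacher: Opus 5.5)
Your proposal is correct and follows the paper's proof essentially step for step. The paper also applies the quotient formula \eqref{eq:full-sector-quotient-v4} to the left-translated sector and integrates out $t$, then splits $\rho_j$ using that $\pi_{j,\infty}$ depends only on $k$ and $\pi_{j,\mathcal Y}$ only on $p$, divides by $\vol(G/\Gamma_j)=\vol(\Kc_{E,f})^{-1}$, sums to obtain $\eta_E$, and invokes Proposition~\ref{prop:parabolic-mass-W-v4}. One minor remark: invariance of $dg$ under left translation by $\gamma_{j,\infty}$ needs only that $dg$ is a left Haar measure, not unimodularity.
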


\begin{proof}[Proof of Proposition~\ref{prop:sector-volume-v4}]
Let \(Y\ge e^{t_0}\).  For fixed \(j\), apply the quotient formula
\eqref{eq:full-sector-quotient-v4} to the characteristic function of the
projection of \(\gamma_{j,\infty}B_{j,Y}(\Theta,\Omega)\) in
\(G/\Gamma_{P,j}\).  Since left translation by \(\gamma_{j,\infty}\)
preserves Haar measure, this gives
\begin{align}
 \vol\bigl(B_{j,Y}(\Theta,\Omega)\bigr)
 &=
 \int_{\mathcal S_j}
 \mathbf 1_{\Theta}(\pi_{j,\infty}(z))
 \mathbf 1_{\Omega}(\pi_{j,\mathcal Y}(z))
 \left(\int_{t_0}^{\log Y}e^{\kappa nt}\,dt\right)d\rho_j(z)\notag\\
 &=
 \frac{Y^{\kappa n}-e^{\kappa nt_0}}{\kappa n}
 \int_{\mathcal S_j}
 \mathbf 1_{\Theta}(\pi_{j,\infty}(z))
 \mathbf 1_{\Omega}(\pi_{j,\mathcal Y}(z))\,d\rho_j(z).
 \label{eq:prop23-6-1-v4}
\end{align}
Under the product identification \eqref{eq:full-sector-product-v4},
\(\pi_{j,\infty}\) depends on the first coordinate and
\(\pi_{j,\mathcal Y}\) on the second.  Hence
\begin{equation}\label{eq:s2-d092-v33}
 \int_{\mathcal S_j}
 \mathbf 1_{\Theta}(\pi_{j,\infty}(z))
 \mathbf 1_{\Omega}(\pi_{j,\mathcal Y}(z))\,d\rho_j(z)
 =\sigma_{E,d}(\Theta)\,(\pi_{j,\mathcal Y})_*\rho_j(\Omega).
\end{equation}
Dividing \eqref{eq:prop23-6-1-v4} by \(\vol(G/\Gamma_j)\), using
\eqref{eq:SLn-covolume-from-Tamagawa-v1}, and summing over \(j\), we obtain
\begin{equation}\label{eq:s2-d093-v33}
 \sum_{j=1}^J
 \frac{\vol(B_{j,Y}(\Theta,\Omega))}{\vol(G/\Gamma_j)}
 =
 \frac{Y^{\kappa n}-e^{\kappa nt_0}}{\kappa n}\,
 \sigma_{E,d}(\Theta)\,\eta_E(\Omega).
\end{equation}
By Proposition~\ref{prop:parabolic-mass-W-v4},
$\eta_E(\Omega)=\kappa n\,\mathfrak a_{K,n,d}\nu_{E,d}(\Omega)$,
which proves \eqref{eq:sector-volume-v4}.

\end{proof}

\subsection{Compactness and charts}\label{subsec:sector-charts}

Recall from Proposition~\ref{prop:sector-decomposition-v4} that
\(\mathcal S_j\) carries the finite measure \(\rho_j\), while
\(\pi_{j,\infty}\) and \(\pi_{j,\mathcal Y}\) record, respectively,
the archimedean Grassmannian component and the pair of normalized adelic
shapes.  Under the product identification
\eqref{eq:full-sector-product-v4}, the quotient map is
\begin{equation}\label{eq:sector-quotient-map-v40}
 \Kc_{E,\infty}\times P_{d,\infty}^1
 \longrightarrow \mathcal S_j,
 \qquad (k,p)\longmapsto (kH_{d,E},p\Gamma_{P,j}).
\end{equation}

\begin{lemma}
\label{lem:sector-charts-v4}
Fix $j$ and let $\Omega\subset\cY_{E;d,e}$ be compact.  Put
\begin{equation}\label{eq:s3-d002-v33}
 \mathcal S_j(\Omega)
 =\{z\in\mathcal S_j:\pi_{j,\mathcal Y}(z)\in\Omega\}.
\end{equation}
Then $\mathcal S_j(\Omega)$ is compact.  Moreover, it is covered by
finitely many relatively compact smooth orbifold charts, each admitting a
smooth local section of the quotient map \eqref{eq:sector-quotient-map-v40}.
In local Euclidean coordinates on these charts, $\rho_j$ has a positive
smooth density bounded above and below by positive constants that are
uniform over the finite cover.
\end{lemma}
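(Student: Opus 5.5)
The plan is to realize $\mathcal S_j$ as a product and reduce everything to the second factor. By \eqref{eq:full-sector-product-v4},
\[
\mathcal S_j\simeq(\Kc_{E,\infty}/H_{d,E})\times(P^1_{d,\infty}/\Gamma_{P,j}).
\]
The first factor is a compact homogeneous manifold, namely $\Gr_{d,\infty}(E)$. The condition $\pi_{j,\mathcal Y}(z)\in\Omega$ depends only on the second coordinate. So it suffices to show that
\[
\mathcal F_j(\Omega):=\{p\Gamma_{P,j}:\pi_{j,\mathcal Y}(p)\in\Omega\}
\]
is compact in $P^1_{d,\infty}/\Gamma_{P,j}$, and then to build charts on that factor. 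Write $p=n\,m$ with $n\in N_{d,\infty}$ and $m=\operatorname{diag}(A,D)\in M^1_{d,\infty}$, using \eqref{levi}. By \eqref{eq:pi-jY-infty-v18}, $\pi_{j,\mathcal Y}$ factors through the Levi projection $\pi_M$, and then through the map
\[
M^1_{d,\infty}\to \cX_{K,d}\times\cX_{K,e},\qquad (A,D)\mapsto\bigl(\Kc_d\widehat A\,\GL_d(K),\ \Kc_e\widehat D\,\GL_e(K)\bigr).
\]
Here the finite parts and the fixed archimedean matrices $B_{d,v},B_{e,v}$ enter only as constant translations.

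\textbf{Step 1 (compactness of the Levi part).} I will show that the induced map
\[
M^1_{d,\infty}/\pi_M(\Gamma_{P,j})\longrightarrow \cX_{K,d}\times\cX_{K,e}
\]
is proper. The group $\pi_M(\Gamma_{P,j})$ is a congruence subgroup of $\{(A,D)\in\GL_d(K)\times\GL_e(K)\}$ with determinant condition, cut out by the compact open subgroup $C_f$. The fibres of $\GL_m(\A_K)^1/\GL_m(K)\to\cX_{K,m}$ over a compact set are compact, because $\Kc_m$ is compact. Two ingredients then give properness:
\begin{itemize}
\item strong approximation for $\SL$ on the finite part, so that a compact set of adelic classes with fixed finite component lifts to a compact set of archimedean classes modulo a congruence subgroup;
\item finiteness of the class-type set (the ideal class and unit index ambiguity), which accounts for $\GL$ versus $\SL$ and is controlled by finiteness of $h_K$ and of the unit group modulo $\Gamma$-units.
\end{itemize}
Concretely, I will express $\cX_{K,d}\times\cX_{K,e}$ locally as a finite union of quotients of the archimedean Levi by arithmetic groups. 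I will show that the fixed finite data pick out one of these pieces, with $\pi_M(\Gamma_{P,j})$ of finite index in its stabilizer. This is the step I expect to be the main obstacle: one must check that fixing the finite components $A_v,D_v$ and the determinant relation $\det A\det D=1$ at each archimedean place does not destroy properness. The key point is that the determinant coupling only involves the compact torus $\overline C^1_K$, so the kernel directions are compact modulo units by Dirichlet's unit theorem.

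\textbf{Step 2 (the unipotent fibre).} The unipotent fibre is compact. The preimage of the compact set $\mathcal M(\Omega)$ in $P^1_{d,\infty}/\Gamma_{P,j}$ fibres over $\mathcal M(\Omega)$ with fibre $N_{d,\infty}/(N_{d,\infty}\cap\Gamma_{P,j})$. This is a compact torus, since $\mathbf N_d\cap\Gamma_{P,j}$ is a lattice $\cong$ an $\cO_K$-lattice in $\operatorname{Mat}_{d\times e}(K)\otimes\R$. Choosing a compact set $\mathcal C\subset M^1_{d,\infty}$ mapping onto $\mathcal M(\Omega)$ and a compact fundamental parallelotope $\mathcal N$ for the unipotent lattice, the set $\mathcal N\mathcal C$ maps onto $\mathcal F_j(\Omega)$. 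Hence $\mathcal S_j(\Omega)$ is the continuous image of the compact set $\Kc_{E,\infty}\times\mathcal N\mathcal C$, and it is closed since $\pi_{j,\mathcal Y}$ is continuous on the quotient. So $\mathcal S_j(\Omega)$ is compact.

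\textbf{Step 3 (charts and densities).} The quotient $(\Kc_{E,\infty}/H_{d,E})\times P^1_{d,\infty}/\Gamma_{P,j}$ is a smooth orbifold; isotropy occurs only from the finite torsion of $\Gamma_{P,j}$. Around each point of the compact set I take the exponential chart of the Lie group $\Kc_{E,\infty}\times P^1_{d,\infty}$ at a lift. This gives a smooth local section of \eqref{eq:sector-quotient-map-v40} defined on a small ball, small enough that $\Gamma_{P,j}$-translates meet only through the stabilizer. Compactness then yields a finite subcover by relatively compact charts. In exponential coordinates, $\rho_j$ is $e^{-\kappa n\tau_j}$ times the pushforward of $d\sigma_{E,d}\otimes dp^1$, as in \eqref{eq:rho-j-v18}. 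Haar measure has a smooth positive density with respect to Lebesgue measure in exponential coordinates, namely a Jacobian determinant of $\exp$ that is nonvanishing near $0$. After shrinking each chart to a relatively compact subball of a larger chart, this density is bounded above and below. Finiteness of the cover gives uniform constants.
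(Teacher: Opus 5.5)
Your proposal is correct and follows essentially the same route as the paper. Both arguments reduce to the factor $P^1_{d,\infty}/\Gamma_{P,j}$, fibre it over $M^1_{d,\infty}/\Gamma_M$ with compact unipotent fibres, prove properness of the Levi map to $\cY_{E;d,e}$ via strong approximation on $\SL_d\times\SL_e$ together with Dirichlet's unit theorem for the norm-one torus, and then use compactness to get finitely many local-section charts on which the Haar-induced density of $\rho_j$ is smooth and uniformly bounded. The paper also proves $\Gamma_M=\mathbf M_d(K)\cap\pi_M(C_f)$ via additive strong approximation, which justifies the arithmeticity of $\pi_M(\Gamma_{P,j})$ that you only assert in Step~1; in Step~2 the compact set should be $\mathcal C\mathcal N$ rather than $\mathcal N\mathcal C$, because the fibre over $m\Gamma_M$ is $N_{d,\infty}/m\Gamma_Nm^{-1}$.
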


\begin{proof}
Put
\begin{equation}\label{eq:Gamma-NM-v37}
 \Gamma_N=\Gamma_{P,j}\cap\mathbf N_d(K),\qquad
 \Gamma_M=\pi_M(\Gamma_{P,j}).
\end{equation}
Recall the compact open subgroup $C_f$ from
\eqref{eq:s2-d043-v33}.  We first identify the projected arithmetic
subgroup:
\begin{equation}\label{eq:GammaM-arithmetic-v42}
 \Gamma_M=\mathbf M_d(K)\cap\pi_M(C_f).
\end{equation}
The inclusion from left to right follows from the definitions.  Conversely, let
$m\in\mathbf M_d(K)\cap\pi_M(C_f)$.  Choose
$n_f\in\mathbf N_d(\A_{K,f})$ such that $n_fm\in C_f$.  The set of
$n\in\mathbf N_d(\A_{K,f})$ for which $nm\in C_f$ is a nonempty open
set.  Since
$\mathbf N_d\simeq\mathbf G_a^{de}$, strong approximation for the
additive group (equivalently, the Chinese remainder theorem) gives an element $n\in\mathbf N_d(K)$ in this open set.
Then
$nm\in\mathbf P_d(K)\cap C_f=\Gamma_{P,j}$, and hence
$m=\pi_M(nm)\in\Gamma_M$.  This proves
\eqref{eq:GammaM-arithmetic-v42}.  Similarly,
$\Gamma_N=\mathbf N_d(K)\cap C_f$ is a full lattice in
$N_{d,\infty}$.

It follows that
\begin{equation}\label{eq:GammaPNM-exact-v42}
 1\longrightarrow\Gamma_N\longrightarrow\Gamma_{P,j}
 \longrightarrow\Gamma_M\longrightarrow1
\end{equation}
is exact.  Hence the Levi projection
\(P_{d,\infty}^1/\Gamma_{P,j}\to M_{d,\infty}^1/\Gamma_M\)
is locally trivial with fiber over $m\Gamma_M$ isomorphic to
\(N_{d,\infty}/m\Gamma_Nm^{-1}\).  This fiber is compact, so the
projection is proper.

The map from \(M_{d,\infty}^1/\Gamma_M\) to \(\cY_{E;d,e}\), defined by
the two normalized diagonal blocks, is also proper.  Indeed,
\eqref{eq:GammaM-arithmetic-v42} realizes $\Gamma_M$ as an arithmetic
subgroup at compact open finite level.  On the derived subgroup
\(\mathbf S_d\simeq\SL_d\times\SL_e\), replacing this level by the
standard compact open changes the quotient only by finite coverings.
At standard level, strong approximation identifies the quotient, after
quotienting by the compact archimedean isometry groups of the two blocks,
with the product of the $\SL_d$- and $\SL_e$-quotients.
Quotienting by a compact group is a proper map.  The remaining central direction is a norm-one torus, whose
arithmetic quotient is compact by Dirichlet's unit theorem.  Therefore
\(P_{d,\infty}^1/\Gamma_{P,j}\to\cY_{E;d,e}\) is proper.  Since the first
factor in \eqref{eq:full-sector-product-v4} is compact, the inverse image
of \(\Omega\) is compact.

The product quotient in
\eqref{eq:full-sector-product-v4} admits smooth local sections.  By compactness, finitely many relatively compact coordinate neighborhoods
with such lifts cover \(\mathcal S_j(\Omega)\).  Since \(\rho_j\) is
induced by smooth Haar measures on the two homogeneous factors, it has a
positive smooth density in every chart; compactness of the finitely many
chart closures gives uniform upper and lower bounds.
\end{proof}

Fix a norm on the Lie algebra of $G$, and, for sufficiently small
$\varepsilon>0$, put
\begin{equation}\label{eq:small-identity-neighborhood-v33}
 U_\varepsilon=\exp\{X:\|X\|<\varepsilon\}.
\end{equation}
Recall the sector $B_{j,Y}(\Theta,\Omega)$ and the cutoff $t_0$ from
Proposition~\ref{prop:sector-decomposition-v4}, and recall $a_d(t)$ from
\eqref{ad}, so that $\chi_d(a_d(t))=e^t$.
For $\Theta\subset\Gr_{d,\infty}(E)$, write
\begin{equation}\label{eq:s3-d003-v33}
 \mathcal S_j(\Theta,\Omega)
 =\{z\in\mathcal S_j:
     \pi_{j,\infty}(z)\in\Theta,\
     \pi_{j,\mathcal Y}(z)\in\Omega\}.
\end{equation}

\begin{proposition}\label{prop:uniform-charts-v4}
Fix $j$.  Let $\Theta\subset\Gr_{d,\infty}(E)$ and let
$\Omega\subset\cY_{E;d,e}$ be relatively compact.  Assume that both sets
have piecewise $C^1$ boundary.  Then the following hold.
\begin{enumerate}[label=\textup{(\roman*)}]
\item There is a finite disjoint Borel partition
\begin{equation}\label{eq:sector-chart-partition-v40}
 \mathcal S_j(\Theta,\Omega)=\bigsqcup_{\ell=1}^N \mathcal D_\ell,
\end{equation}
where each $\mathcal D_\ell$ lies in one of the relatively compact charts of
Lemma~\ref{lem:sector-charts-v4} and has piecewise $C^1$
boundary.  For each $\ell$ there is a smooth embedding
\[
 \Phi_\ell:\mathcal D_\ell\times[t_0,\infty)\longrightarrow G
\]
such that, for every $Y\ge e^{t_0}$,
\begin{equation}\label{eq:sector-chart-decomposition-v33}
 B_{j,Y}(\Theta,\Omega)
 =\bigsqcup_{\ell=1}^N
   \Phi_\ell\bigl(\mathcal D_\ell\times[t_0,\log Y]\bigr).
\end{equation}

\item For each $\ell$ there are smooth maps
$g_{\ell,-},g_{\ell,+}:\mathcal D_\ell\to G$ with relatively compact images and
a positive smooth function $J_\ell:\mathcal D_\ell\to\R_{>0}$, bounded above
and below by positive constants.  In local coordinates $z$ on $\mathcal D_\ell$,
let $dz$ denote Lebesgue measure.  Then
\begin{align}
 \Phi_\ell(z,t)&=g_{\ell,-}(z)a_d(t)g_{\ell,+}(z),
 \label{eq:Phi-ell-factorization-v33}\\
 \Phi_\ell^*(dg)&=J_\ell(z)e^{\kappa nt}\,dz\,dt,
 \label{eq:Phi-ell-Haar-v33}
\end{align}

\item After enlarging the finite atlas, there are constants
$C_0>0$ and $\varepsilon_0>0$, independent of $Y$, such that the
following holds.  If $0<\varepsilon<\varepsilon_0$,
$g=\Phi_\ell(z,t)$, and $u\in U_\varepsilon$, then each of $ug$ and
$gu$ can be written in the enlarged atlas with local parameters
$(z',t')$.  Fix any Riemannian distance $\operatorname{dist}$ on the compact
chart neighborhood containing $\mathcal D_\ell$.  Then
\begin{equation}\label{eq:chart-coordinate-stability-v33}
 |t'-t|\le C_0\varepsilon,
 \qquad
 \operatorname{dist}(z',z)\le C_0\varepsilon.
\end{equation}
\end{enumerate}
\end{proposition}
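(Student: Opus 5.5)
We build the atlas from Lemma~\ref{lem:sector-charts-v4} and the explicit formula \eqref{eq:Phi-tilde-v18}.

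\emph{Step (i): partition.} Since $\Omega$ is relatively compact, its closure $\overline\Omega$ is compact. By Lemma~\ref{lem:sector-charts-v4}, the set $\mathcal S_j(\overline\Omega)$ is compact and covered by finitely many relatively compact charts $V_1,\dots,V_N$. Each chart carries a smooth local section $s_\ell:V_\ell\to\Kc_{E,\infty}\times P^1_{d,\infty}$, $z\mapsto(k_\ell(z),p_\ell(z))$, of the quotient map \eqref{eq:sector-quotient-map-v40}. We disjointify by setting $\mathcal D_\ell=(V_\ell\setminus\bigcup_{i<\ell}V_i)\cap\mathcal S_j(\Theta,\Omega)$. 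Choosing the $V_\ell$ as coordinate boxes makes their boundaries smooth. The sets $\mathcal S_j(\Theta,\Omega)$ are preimages under the smooth submersions $\pi_{j,\infty}$ and $\pi_{j,\mathcal Y}$ of sets with piecewise $C^1$ boundary, so the $\mathcal D_\ell$ have piecewise $C^1$ boundary.

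There is one subtlety. The sector $B_{j,Y}$ was built from the fixed Borel representatives $\mathcal R$ and $\mathcal F_j$, not from the smooth sections. I would resolve this by choosing $\mathcal R$ and $\mathcal F_j$ compatibly with the charts, so that on $V_\ell$ they are the images of the $s_\ell$. This is permitted because the proposition only requires Borel choices. Then set
\[
\Phi_\ell(z,t)=\gamma_{j,\infty}^{-1}k_\ell(z)\,a_d(t-\tau_j)\,p_\ell(z),
\]
which gives \eqref{eq:sector-chart-decomposition-v33} directly from \eqref{eq:sector-definition-v14}. Injectivity of $\widetilde\Phi_j$, proved above, together with smoothness of the sections, gives a smooth injective immersion. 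Its differential is injective because $G\cong\Kc_{E,\infty}\times_{H_{d,E}}(a_d(\R)P^1_{d,\infty})$ locally. Restricting to relatively compact pieces makes it an embedding.

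\emph{Step (ii): factorization and density.} Put $g_{\ell,-}(z)=\gamma_{j,\infty}^{-1}k_\ell(z)a_d(-\tau_j)$ and $g_{\ell,+}(z)=p_\ell(z)$. Since $a_d$ is a homomorphism, this gives \eqref{eq:Phi-ell-factorization-v33}. The images are relatively compact because the charts and sections are. The Jacobian follows from \eqref{eq:parabolic-integration-v41} and Weil's formula, as in \eqref{eq:full-sector-quotient-v4}. Locally, $\Phi_\ell^*(dg)$ equals $e^{\kappa n t}\,dt$ times the density of $\rho_j$ in the chart coordinates. Lemma~\ref{lem:sector-charts-v4} shows this density is smooth and bounded above and below, which gives $J_\ell$.

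\emph{Step (iii): stability.} This is the main obstacle, because $t$ is unbounded and conjugation by $a_d(t)$ expands $N_{d,\infty}$.

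For left perturbations, write $ug=u\,\gamma_{j,\infty}^{-1}k\,a_d(s)p$. The map $G\to\Kc_{E,\infty}\times\R\times P^1_{d,\infty}$, $g'\mapsto(k',s',p')$, induced by the decomposition \eqref{eq:s2-d082-v33} is smooth. It is also left-equivariant in the appropriate sense, since $u k a_d(s)=k'a_d(s+\delta)q$ with $q\in P^1_{d,\infty}$ near the identity. Then $ug=\gamma^{-1}_{j,\infty}k'a_d(s+\delta)\,(q p)$, where $|\delta|$, $\mathrm{dist}(k',k)$ and $q$ are all $O(\varepsilon)$ uniformly. Crucially, the correction $q$ sits to the left of $a_d(s)$ as $a_d(s)q'$, with $q'=a_d(-s)qa_d(s)$. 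This is fine, because we only need the product $ka_d(s)p$ to stay in the same form, and $u$ only perturbs $ka_d(s)$, a compact-times-torus piece near its point. Concretely, $uka_d(s)=k'a_d(s')b$ with $b\in P^1_{d,\infty}$, and $b=a_d(-s')(\cdots)a_d(s)$ is computed at the compact level $s=0$, because $u k=k'a_d(\delta)b_0$ by smoothness of the decomposition near $\Kc_{E,\infty}$. Then $b=a_d(-s)b_0a_d(s)$. The Levi part of $b_0$ is unchanged by this conjugation. The $N$-part of $b_0$ is contracted for $s\ge t_0-\tau_j$ once we decompose using the opposite convention; if the expansion goes the wrong way, I would instead use $G=\Kc\,P$ with $P$ absorbing $N$ on the right. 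The change in $p$ is then a perturbation in the compact fibre $N_{d,\infty}/\Gamma_N$ and the Levi quotient, which is $O(\varepsilon)$ in the Riemannian distance.

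For right perturbations, $gu=\gamma^{-1}k a_d(s)(pu)$. Writing $pu=k''a_d(\delta)p''$ with $k''$ near the identity is problematic, since $k''$ would have to pass through $a_d(s)$. I would instead use that $p$ ranges over a compact set, so $pup^{-1}\in U_{C\varepsilon}$, and reduce to the left case for $a_d(s)$ acting on $U_{C\varepsilon}$, decomposed into $P^1_{d,\infty}$ times the opposite unipotent. The opposite unipotent component is contracted by $a_d(s)$ for large $s$, so it is absorbed into $k$ with $O(\varepsilon)$ change.

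Finally, enlarging the atlas to cover the $C\varepsilon_0$-neighbourhood of $\mathcal S_j(\overline\Omega)$, which is compact by Lemma~\ref{lem:sector-charts-v4}, gives uniform constants $C_0$ and $\varepsilon_0$.
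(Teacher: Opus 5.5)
Your proposal is correct and is essentially the paper's proof. It uses the same lifts $\Phi_\ell(z,t)=\gamma_{j,\infty}^{-1}k_\ell(z)a_d(t-\tau_j)p_\ell(z)$ and the same Jacobian via \eqref{eq:full-sector-quotient-v4} and Lemma~\ref{lem:sector-charts-v4}. In (iii) it makes the same reduction to $h\,a_d(s)$ and $a_d(s)\,h$ with $h=I+O(\varepsilon)$, where your local Iwasawa and big-cell decompositions amount to the paper's graph computation.

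Your hedge about the contraction direction is not needed: $a_d(-s)n(X)a_d(s)=n(e^{-ns/(de)}X)$ stays bounded for $s\ge t_0-\tau_j$. Choosing $\mathcal R$ and $\mathcal F_j$ to agree with the chart sections usefully makes precise the set identity \eqref{eq:sector-chart-decomposition-v33}, which the paper asserts without comment. The one step you only assert is that $\pi_{j,\mathcal Y}$ is a submersion; the paper justifies it through the Levi projection and the quotient by the compact isometry groups, and you should include that argument.
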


\begin{proof}
\smallskip
\noindent\emph{Proof of \textup{(i)}.}
Choose a relatively compact open neighborhood of $\overline\Omega$ with
piecewise $C^1$ boundary and compact closure in $\cY_{E;d,e}$, and apply
Lemma~\ref{lem:sector-charts-v4} to its closure.  The map $\pi_{j,\infty}$ is the quotient map to
$\Kc_{E,\infty}/H_{d,E}\simeq\Gr_{d,\infty}(E)$, hence is a smooth
orbifold submersion.  For $\pi_{j,\mathcal Y}$, work in one of the local
orbifold charts from Lemma~\ref{lem:sector-charts-v4}.  The
finite components are fixed on such a chart and the right arithmetic
identifications are discrete.  After projecting to the Levi factor, the
map to $\cY_{E;d,e}$ is obtained from the two diagonal blocks, followed by
the height-one normalization and the quotient by the compact archimedean
isometry groups.  The condition defining $M_{d,\infty}^1$ is the determinant relation defining
$\cY_{E;d,e}$; modulo these compact groups, the map is a local homogeneous
quotient.  Such a quotient map is a submersion.  Hence
$\pi_{j,\mathcal Y}$ is a smooth orbifold submersion as well.  Cutting
the finite atlas by the inverse images of $\Theta$ and $\Omega$ gives
the partition \eqref{eq:sector-chart-partition-v40}, with each $\mathcal D_\ell$
having piecewise $C^1$ boundary.

On the chart containing $\mathcal D_\ell$, choose a smooth lift
\begin{equation}\label{eq:chart-lift-v38}
 z\longmapsto(k_\ell(z),p_\ell(z))
 \in\Kc_{E,\infty}\times P_{d,\infty}^1
\end{equation}
and define
\begin{equation}\label{eq:Phi-ell-v33}
 \Phi_\ell(z,t)
 =\gamma_{j,\infty}^{-1}k_\ell(z)a_d(t-\tau_j)p_\ell(z).
\end{equation}
By \eqref{eq:full-sector-quotient-v4}, these maps are injective and their
images give \eqref{eq:sector-chart-decomposition-v33}.

\smallskip
\noindent\emph{Proof of \textup{(ii)}.}
Set
\[
 g_{\ell,-}(z)=\gamma_{j,\infty}^{-1}k_\ell(z),
 \qquad
 g_{\ell,+}(z)=a_d(-\tau_j)p_\ell(z).
\]
Their images are relatively compact, and
\eqref{eq:Phi-ell-factorization-v33} follows from
\eqref{eq:Phi-ell-v33}.  The Haar formula
\eqref{eq:sector-haar-product-v14} and the local density statement in
Lemma~\ref{lem:sector-charts-v4} give a positive smooth function
$J_\ell$ with the asserted uniform upper and lower bounds and yield
\eqref{eq:Phi-ell-Haar-v33}.

\smallskip
\noindent\emph{Proof of \textup{(iii)}.}
Because the side factors in \eqref{eq:Phi-ell-factorization-v33} range
over fixed compact sets, it suffices to control $a_d(t)h$ and $ha_d(t)$
for $h=I+O(\varepsilon)$, uniformly for $t\ge t_0-1$.  At an
archimedean place write
\begin{equation}\label{eq:block-wavefront-v37}
 a_{d,v}(t)=
 \begin{pmatrix}r_tI_d&0\\0&s_tI_e\end{pmatrix},\qquad
 r_t=e^{t/d},\quad s_t=e^{-t/e},\quad
 \frac{s_t}{r_t}=e^{-nt/(de)},\qquad
 h=\begin{pmatrix}I+A&B\\ C&I+D\end{pmatrix},
\end{equation}
with $A,B,C,D=O(\varepsilon)$.  For $a_{d,v}(t)h$ the first $d$ columns
span the graph of $(s_t/r_t)C(I+A)^{-1}$; for $ha_{d,v}(t)$ they span the
graph of $C(I+A)^{-1}$.  Both are $O(\varepsilon)$ uniformly for
$t\ge t_0-1$.  A smooth local section of
$\Kc_{E,v}\to\Gr_d(E_v)$ straightens either graph by an element
$I+O(\varepsilon)$.  The remaining factor is parabolic and its upper-left
block is $r_t(I+O(\varepsilon))$; in the left-product case the remaining
upper-right block is $O(\varepsilon s_t/r_t)=O(\varepsilon)$.

Applying this at every archimedean place and using the definition of
$\chi_d$ gives $t'=t+O(\varepsilon)$.  After extracting $a_d(t')$, the
remaining $P_{d,\infty}^1$-factor is $I+O(\varepsilon)$.  Thus both the
compact and parabolic coordinates vary by $O(\varepsilon)$, uniformly in
$Y$.  The same argument applies at complex places on the underlying real
vector space.

Fix a Riemannian metric on the compact neighborhood covered by this finite
atlas.  The local sections and transition maps have uniformly bounded
differentials on the chart closures, so the point of
$\mathcal S_j$ moves by $O(\varepsilon)$ in this metric.  Shrinking $\varepsilon_0$ if necessary keeps all perturbed points inside
this fixed finite atlas; a perturbed point crossing a partition boundary is
represented in an overlapping chart.
This proves \eqref{eq:chart-coordinate-stability-v33}.
\end{proof}

\subsection{Lattice-point counting}\label{subsec:lattice-counting}

For a Borel set $B\subset G$ put
\begin{equation}\label{eq:thickening-erosion-v33}
 B^{+,\varepsilon}=U_\varepsilon B U_\varepsilon,
 \qquad
 B^{-,\varepsilon}=\bigcap_{u_1,u_2\in U_\varepsilon}u_1Bu_2.
\end{equation}

Recall that \(a_d(t)\) is normalized by
\(\chi_d(a_d(t))=e^t\), and that the charts of
Proposition~\ref{prop:uniform-charts-v4} have Haar density
\(J_\ell(z)e^{\kappa nt}\,dz\,dt\).

\begin{proposition}\label{prop:lattice-count-v4}
Let $\Gamma=\Gamma_j$ be one of the arithmetic lattices from
Proposition~\ref{prop:sector-decomposition-v4}.  Suppose that, for all large
$Y$, the Borel set $B_Y\subset G$ has a finite disjoint decomposition
\begin{equation}\label{eq:abstract-chart-decomposition-v40}
 B_Y=\bigsqcup_{\ell=1}^N
 \Phi_\ell\bigl(\mathcal D_\ell\times[t_0,\log Y]\bigr),
\end{equation}
where the $\mathcal D_\ell$ are fixed bounded piecewise-$C^1$ coordinate domains
and the fixed maps $\Phi_\ell$ satisfy
\eqref{eq:Phi-ell-factorization-v33},
\eqref{eq:Phi-ell-Haar-v33}, and
\eqref{eq:chart-coordinate-stability-v33}.  If
$\vol(B_Y)\to\infty$, then
\begin{equation}\label{eq:s3-d051-v33}
 \#(\Gamma\cap B_Y)
 \sim\frac{\vol(B_Y)}{\vol(G/\Gamma)}.
\end{equation}
\end{proposition}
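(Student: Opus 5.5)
We follow the standard Eskin--McMullen / Duke--Rudnick--Sarnak scheme: well-roundedness combined with mixing of the $G$-action on $G/\Gamma$. We first reduce to a single chart $\Phi_\ell$. The decomposition \eqref{eq:abstract-chart-decomposition-v40} is finite and disjoint, and both the count and the volume are additive. By \eqref{eq:Phi-ell-Haar-v33} the volume of each piece is
\[
\int_{\mathcal D_\ell}J_\ell\,dz\,\frac{Y^{\kappa n}-e^{\kappa nt_0}}{\kappa n}.
\]
So all pieces with $\mathcal D_\ell$ of positive measure grow at the same rate $Y^{\kappa n}$, and pieces of measure zero can be absorbed into the error terms below.

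The first step is \emph{well-roundedness}. Using \eqref{eq:chart-coordinate-stability-v33}, we show that
\[
B_Y^{+,\varepsilon}\subset\bigsqcup_\ell\Phi_\ell\bigl(\mathcal D_\ell^{+C_0\varepsilon}\times[t_0-C_0\varepsilon,\log Y+C_0\varepsilon]\bigr),
\]
and that $B_Y^{-,\varepsilon}$ contains the analogous set built from the $C_0\varepsilon$-interior of $\mathcal D_\ell$ and the interval $[t_0+C_0\varepsilon,\log Y-C_0\varepsilon]$. The difficulty is that a perturbed point may leave $\mathcal D_\ell$ and be represented in a neighbouring chart. Since the $\mathcal D_\ell$ partition $\mathcal S_j(\Theta,\Omega)$, this only moves points across interior boundaries of the partition, which cancel. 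Only the outer boundary of $\bigsqcup\mathcal D_\ell$ contributes, and because it is piecewise $C^1$, its $C_0\varepsilon$-neighbourhood has measure $O(\varepsilon)$. Integrating $J_\ell e^{\kappa nt}$ then gives
\[
\vol(B_Y^{+,\varepsilon})\le(1+c\varepsilon)\vol(B_Y)+O_\varepsilon(1),\qquad \vol(B_Y^{-,\varepsilon})\ge(1-c\varepsilon)\vol(B_Y)-O_\varepsilon(1).
\]
Here the $t$-integral $e^{\kappa n(\log Y\pm C_0\varepsilon)}$ contributes the factor $1+O(\varepsilon)$, and the lower cutoff contributes only $O(1)$.

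The second step is the counting argument proper. Fix a bump function $\psi_\varepsilon\ge0$ supported in $U_\varepsilon$ with integral one, and put $F_\varepsilon(g\Gamma)=\sum_{\gamma\in\Gamma}\psi_\varepsilon(g\gamma)$, as a function on $G/\Gamma$. Unfolding in the usual way gives the sandwich
\[
\int_{B^{-,\varepsilon}_Y}F_\varepsilon(g^{-1}\Gamma)\,dg\le\#(\Gamma\cap B_Y)\le\int_{B^{+,\varepsilon}_Y}F_\varepsilon(g^{-1}\Gamma)\,dg.
\]
It therefore suffices to show that
\[
\frac1{\vol(B_Y^{\pm,\varepsilon})}\int_{B_Y^{\pm,\varepsilon}}F_\varepsilon(g^{-1}\Gamma)\,dg\to\frac1{\vol(G/\Gamma)}.
\]
We do this in the chart coordinates. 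Using the factorization \eqref{eq:Phi-ell-factorization-v33} we write $g=g_-(z)a_d(t)g_+(z)$. The integral becomes
\[
\int J_\ell(z)\,e^{\kappa nt}\,\bigl\langle a_d(t)\cdot(g_+(z)\cdot\text{bump}),\;g_-(z)^{-1}\cdot\text{bump}\bigr\rangle\,dz\,dt,
\]
that is, a matrix coefficient of $a_d(t)$ between translates of $F_\varepsilon$.

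The main obstacle is the mixing input. We need that $\langle a_d(t)\phi_1,\phi_2\rangle_{L^2(G/\Gamma)}\to\int\phi_1\int\phi_2/\vol(G/\Gamma)$ as $t\to\infty$, uniformly for $\phi_1,\phi_2$ ranging over the compact families $\{g_\pm(z)\cdot F_\varepsilon\}$. Uniformity holds because these families are compact in $L^2$, as $g_\pm$ have relatively compact image. Mixing itself follows from Howe--Moore: $G$ is semisimple with the irreducible lattice $\Gamma$, and $a_d(t)\to\infty$ in every simple factor $\SL_n(K_v)$, since $a_d$ acts by the same formula at every archimedean place. Given uniform mixing, the contribution from $t\le T$ is $O_\varepsilon(e^{\kappa nT})=o(\vol(B_Y))$ because $\vol(B_Y)\to\infty$, and for $t>T$ the matrix coefficient is within $\delta$ of its limit. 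Dividing by $\vol(B_Y)$ and letting $Y\to\infty$, then $\delta\to0$, then $\varepsilon\to0$, combined with the well-roundedness bounds, yields
\[
\#(\Gamma\cap B_Y)\sim\frac{\vol(B_Y)}{\vol(G/\Gamma)}.
\]
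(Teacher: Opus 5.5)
Your overall scheme matches the paper's: well-roundedness from the coordinate stability \eqref{eq:chart-coordinate-stability-v33}, uniform Howe--Moore decay on compact families of translates, and a smoothing sandwich. However, as written, your smoothing step does not connect to your mixing step.

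Your sandwich controls $\int_{B}F_\varepsilon(g^{-1}\Gamma)\,dg$, where the periodized bump is evaluated at the single point $g^{-1}\Gamma$. In chart coordinates the integrand is
\[
F_\varepsilon\bigl(g_+(z)^{-1}a_d(t)^{-1}g_-(z)^{-1}\Gamma\bigr)
=\bigl(L_{a_d(t)}L_{g_+(z)}F_\varepsilon\bigr)\bigl(g_-(z)^{-1}\Gamma\bigr).
\]
This is $L_{a_d(t)}L_{g_+(z)}F_\varepsilon$ paired with a Dirac mass at $g_-(z)^{-1}\Gamma$, not with $L_{g_-(z)^{-1}}F_\varepsilon$. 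A Dirac mass is not in $L^2(G/\Gamma)$, so Howe--Moore says nothing about this quantity. Your identity ``the integral becomes $\int J_\ell e^{\kappa nt}\langle\cdots\rangle\,dz\,dt$'' is therefore false. The limit you want does hold, but deriving it from this pointwise form would need a separate equidistribution argument for expanding translates. As it stands, the key step is missing.

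The fix is a second smoothing. The paper integrates the correlation $\langle L_g\Psi,\Psi\rangle=\int_{G/\Gamma}\Psi(g^{-1}x)\Psi(x)\,dx$ instead of a point value. Unfolding gives
\[
\int_B\langle L_g\Psi,\Psi\rangle\,dg=\sum_{\gamma\in\Gamma}\iint\psi(u_2)\psi(u_1)\mathbf 1_B(u_2\gamma u_1^{-1})\,du_1\,du_2 .
\]
The sandwich then holds with the two-sided sets $U_\varepsilon BU_\varepsilon$ and $\bigcap_{u_1,u_2}u_1Bu_2$ of \eqref{eq:thickening-erosion-v33}. This is why the chart stability is required for both $ug$ and $gu$. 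With this integrand your chart formula is exactly right, and the rest of your argument (cutoff at $t\le T$, uniform mixing over the compact families, letting $\varepsilon\to0$) goes through.

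Alternatively, you can keep your one-sided sandwich and smooth once more. For $h\in U_\varepsilon$, one has $\int_BF_\varepsilon(g^{-1}h\Gamma)\,dg=\int_{h^{-1}B}F_\varepsilon(g^{-1}\Gamma)\,dg$, and this lies between the integrals over $B^{-,\varepsilon}$ and $B^{+,\varepsilon}$. Averaging in $h$ against $\psi$ then turns the point evaluation into $\langle L_gF_\varepsilon,F_\varepsilon\rangle$, at the cost of one more thickening.
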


\begin{proof}
Let $L_g$ denote left translation by $g\in G$ on $L^2(G/\Gamma)$.  The projection of $a_d(t)$ to every
archimedean almost-simple factor tends to infinity.  The arithmetic lattice
$\Gamma$ is irreducible by
\cite{Morris}*{Proposition~5.5.8 and Remark~5.5.11}, and Moore's theorem
\cite{Morris}*{Theorem~4.10.2} gives ergodicity of every noncompact factor.
Hence Howe--Moore \cite{HoweMoore}*{Theorem~5.1} implies matrix-coefficient
decay, uniformly when the factors to the left and right of $a_d(t)$ range
over fixed compact subsets of $G$.

The coordinate stability in Proposition~\ref{prop:uniform-charts-v4}
shows that thickening or eroding changes each bounded $z$-domain only by an
$O(\varepsilon)$ boundary neighborhood and changes the endpoints of the
$t$-interval by $O(\varepsilon)$.  Since the $z$-domains have piecewise $C^1$ boundary and
$J_\ell\asymp1$, the Haar formula \eqref{eq:Phi-ell-Haar-v33} gives
\begin{align}
 \limsup_{Y\to\infty}
 \frac{\vol(B_Y^{+,\varepsilon})}{\vol(B_Y)}
 &\le 1+C\varepsilon,\label{eq:well-plus-v4}\\
 \liminf_{Y\to\infty}
 \frac{\vol(B_Y^{-,\varepsilon})}{\vol(B_Y)}
 &\ge 1-C\varepsilon\label{eq:well-minus-v4}
\end{align}
for some $C>0$ and all sufficiently small $\varepsilon$.
Choose $\varepsilon$ so that $C\varepsilon<1/2$.  Take
$\psi\in C_c(G)$, $\psi\ge0$, supported in
$U_\varepsilon$, with $\int_G\psi\,dg=1$, and periodize it by
\begin{equation}\label{eq:periodization-v37}
 \Psi(g\Gamma)=\sum_{\gamma\in\Gamma}\psi(g\gamma).
\end{equation}
Then $\Psi\in L^2(G/\Gamma)$ and $\int_{G/\Gamma}\Psi=1$.  Unfolding gives,
for every finite-measure Borel set $B\subset G$,
\begin{equation}\label{eq:periodization-unfold-v37}
 \int_B\langle L_g\Psi,\Psi\rangle\,dg
 =\sum_{\gamma\in\Gamma}\int_{U_\varepsilon}\!\int_{U_\varepsilon}
   \psi(u_2)\psi(u_1)\mathbf1_B(u_2\gamma u_1^{-1})\,du_1\,du_2.
\end{equation}

In each chart, the mean-zero part of the
correlation is a matrix coefficient evaluated at
$g_{\ell,-}(z)a_d(t)g_{\ell,+}(z)$.  Uniform Howe--Moore decay and the
bounded volume of the region $t\le T$ therefore give
\begin{equation}\label{eq:correlation-average-v4}
 \int_{B_Y}\langle L_g\Psi,\Psi\rangle\,dg
 =\frac{\vol(B_Y)}{\vol(G/\Gamma)}+o(\vol(B_Y)).
\end{equation}
The same argument applies to $B_Y^{-,\varepsilon}$ and
$B_Y^{+,\varepsilon}$: the former is contained in $B_Y$, while the latter
has, by Proposition~\ref{prop:uniform-charts-v4}, the same factorization
in the $t$-variable, with the two side factors ranging over larger compact sets
independent of $Y$.

The support of $\psi$ and \eqref{eq:periodization-unfold-v37} give the
smoothing inequalities (compare \cite{EskinMcMullen})
\begin{equation}\label{eq:smoothing-inequality-v37}
 \int_{B_Y^{-,\varepsilon}}\langle L_g\Psi,\Psi\rangle\,dg
 \le \#(\Gamma\cap B_Y)
 \le \int_{B_Y^{+,\varepsilon}}\langle L_g\Psi,\Psi\rangle\,dg.
\end{equation}
Indeed, every lattice point of $B_Y$ contributes total weight one to the
right-hand integral, while a lattice term can contribute to the left-hand
integral only if that lattice point already lies in $B_Y$.

Apply \eqref{eq:correlation-average-v4} to the thickened and eroded
families and then use \eqref{eq:well-plus-v4}--\eqref{eq:well-minus-v4}.
For fixed $\varepsilon$ this yields
\begin{equation}\label{eq:lattice-squeeze-v37}
 \frac{1-C\varepsilon}{\vol(G/\Gamma)}
 \le\liminf_{Y\to\infty}\frac{\#(\Gamma\cap B_Y)}{\vol(B_Y)}
 \le\limsup_{Y\to\infty}\frac{\#(\Gamma\cap B_Y)}{\vol(B_Y)}
 \le\frac{1+C\varepsilon}{\vol(G/\Gamma)}.
\end{equation}
Letting $\varepsilon\downarrow0$ proves \eqref{eq:s3-d051-v33}.
\end{proof}

\subsection{Equidistribution}\label{subsec:equidistribution}

Recall that \(W_\infty\) is the archimedean Grassmannian component of
\(W\), while \(\widehat W\) and \(\widehat{E/W}\) are the height-one
normalizations defined in Section~\ref{subsec:parameter-spaces}.  The
measures $\sigma_{E,d}$ and $\nu_{E,d}$ are the probability measures on
$\Gr_{d,\infty}(E)$ and $\cY_{E;d,e}$, respectively, and
$\mathfrak a_{K,n,d}$ is defined in \eqref{a}.

\begin{proposition}\label{prop:sector-equidistribution}
Assume $H(E)=1$.  Let $\Theta\subset\Gr_{d,\infty}(E)$ be a
$\sigma_{E,d}$-continuity set and let $\Omega\subset\cY_{E;d,e}$ be a
relatively compact $\nu_{E,d}$-continuity set.  Then
\begin{align}
 &\#\{W\subset E:\dim_KW=d,\ H_E(W)\le Y,\ W_\infty\in\Theta,\
 ([\widehat W],[\widehat{E/W}])\in\Omega\}\notag\\
 &\qquad=
 \mathfrak a_{K,n,d}
 \sigma_{E,d}(\Theta)\nu_{E,d}(\Omega)Y^{\kappa n}+o(Y^{\kappa n}).
 \label{eq:sector-equidistribution}
\end{align}
\end{proposition}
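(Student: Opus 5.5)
The plan is to combine the sector decomposition (Proposition~\ref{prop:sector-decomposition-v4}), the volume formula (Proposition~\ref{prop:sector-volume-v4}), and the lattice-point count (Proposition~\ref{prop:lattice-count-v4}). The counting function on the left of \eqref{eq:sector-equidistribution} equals $\sum_j\#(\Gamma_j\cap B_{j,Y}(\Theta,\Omega))$ by \eqref{eq:sector-count-v4}. We first treat the case where $\Theta$ and $\Omega$ have piecewise $C^1$ boundary, and then pass to general continuity sets by approximation.

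\emph{Regular sets.} Assume $\Theta$ and $\Omega$ have piecewise $C^1$ boundary and $\Omega$ is relatively compact.
\begin{enumerate}[label=\textup{(\alph*)}]
\item Proposition~\ref{prop:uniform-charts-v4} writes each $B_{j,Y}(\Theta,\Omega)$ as a finite disjoint union of chart images of the form required by \eqref{eq:abstract-chart-decomposition-v40}.
\item If $\sigma_{E,d}(\Theta)\nu_{E,d}(\Omega)>0$, then some sector has volume tending to infinity, by Proposition~\ref{prop:sector-volume-v4}. For each $j$ with $\vol(B_{j,Y})\to\infty$, Proposition~\ref{prop:lattice-count-v4} gives $\#(\Gamma_j\cap B_{j,Y})=\vol(B_{j,Y})/\vol(G/\Gamma_j)+o(Y^{\kappa n})$.
\item Sectors of zero $\rho_j$-mass have bounded volume. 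For these, and whenever the product of the masses vanishes, I would use monotonicity: enlarge the sector to one of small positive volume and bound its count by the upper inequality in \eqref{eq:smoothing-inequality-v37}. This contributes $O(\delta Y^{\kappa n})$ for arbitrary $\delta>0$.
\item Summing over $j$ and using \eqref{eq:sector-volume-v4} gives the main term $\mathfrak a_{K,n,d}\sigma_{E,d}(\Theta)\nu_{E,d}(\Omega)Y^{\kappa n}$. The correction $e^{\kappa nt_0}$ is absorbed into $o(Y^{\kappa n})$.
\end{enumerate}

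\emph{General continuity sets.} Let $\Theta$ be a $\sigma_{E,d}$-continuity set and $\Omega$ a relatively compact $\nu_{E,d}$-continuity set. Given $\delta>0$, I would choose sandwiches $\Theta^-\subset\Theta\subset\Theta^+$ and $\Omega^-\subset\Omega\subset\Omega^+$, with $\Omega^+$ still relatively compact, all having piecewise $C^1$ boundary, and with
\[
\sigma_{E,d}(\Theta^+\setminus\Theta^-)<\delta,\qquad \nu_{E,d}(\Omega^+\setminus\Omega^-)<\delta .
\]
Such sets should exist because the boundaries are null. For instance, take $\Theta^+$ to be a finite union of small smooth coordinate boxes covering $\overline\Theta$ inside a thin neighbourhood of $\overline\Theta$, and $\Theta^-$ to be a union of boxes contained in the interior. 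The measures are absolutely continuous with smooth density in charts: $\sigma_{E,d}$ is smooth, and so is $\nu_{E,d}$ on the orbifold $\cY_{E;d,e}$ through $\pi_{j,\mathcal Y}$, by Lemma~\ref{lem:sector-charts-v4}. Hence the measure of a thin neighbourhood of a null boundary is small.

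By monotonicity of the counting function, the count for $(\Theta,\Omega)$ is squeezed between the counts for $(\Theta^-,\Omega^-)$ and $(\Theta^+,\Omega^+)$. Dividing by $Y^{\kappa n}$ and applying the regular case, the $\liminf$ and $\limsup$ both lie within $\mathfrak a_{K,n,d}(2\delta+\delta^2)$ of $\mathfrak a_{K,n,d}\sigma_{E,d}(\Theta)\nu_{E,d}(\Omega)$. Letting $\delta\to0$ proves \eqref{eq:sector-equidistribution}.

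The main obstacle is the construction of these piecewise-$C^1$ inner and outer approximations on the orbifold $\cY_{E;d,e}$, which has finite-level singularities and a determinant-class fibration. I would handle this chartwise. Cover $\overline{\Omega^+}$ by the finitely many charts of Lemma~\ref{lem:sector-charts-v4}, build dyadic-cube approximations in each Euclidean chart, and push them forward. Smoothness and boundedness of the densities there make the small-neighbourhood estimate for $\partial\Omega$ uniform.
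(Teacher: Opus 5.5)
Your proposal is correct and follows essentially the same route as the paper. In the piecewise-$C^1$ case you apply Proposition~\ref{prop:lattice-count-v4} to sectors of positive $\rho_j$-mass and bound $\rho_j$-null sectors by enlarging them to sectors of small positive mass; you then pass to general continuity sets by inner and outer approximation, keeping $\Omega^+$ relatively compact. The paper also spells out why a null sector can be enlarged in this way: its closure is still $\rho_j$-null, because the preimages of $\partial\Theta$ and $\partial\Omega$ are null by the submersion argument. Hence it can be covered by finitely many coordinate boxes of small positive measure.
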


\begin{proof}
First suppose that $\Theta$ and $\Omega$ have piecewise $C^1$ boundary.
For each $j$ with
$\rho_j(\mathcal S_j(\Theta,\Omega))>0$,
Proposition~\ref{prop:lattice-count-v4} and the volume formula
\eqref{eq:prop23-6-1-v4} give
\begin{equation}\label{eq:positive-sector-count-v37}
 \#\bigl(\Gamma_j\cap B_{j,Y}(\Theta,\Omega)\bigr)
 =\frac{\vol(B_{j,Y}(\Theta,\Omega))}{\vol(G/\Gamma_j)}
  +o(Y^{\kappa n}).
\end{equation}

If $\rho_j(\mathcal S_j(\Theta,\Omega))=0$, then
Lemma~\ref{lem:sector-charts-v4} makes its closure compact inside
$\mathcal S_j(\overline\Omega)$.  The submersion argument in the proof of
Proposition~\ref{prop:uniform-charts-v4} shows that the inverse images of
$\partial\Theta$ and $\partial\Omega$ are $\rho_j$-null; hence this
closure is still $\rho_j$-null.  If the sector is empty, there is nothing
to prove.  Otherwise, given
$\varepsilon>0$, cover its closure by a finite union of relatively compact
coordinate boxes with piecewise $C^1$ boundary whose $\rho_j$-measure lies
strictly between $0$ and $\varepsilon$.  Restricting $z$ to this union gives
a sector with the chart properties of Proposition~\ref{prop:uniform-charts-v4},
so Proposition~\ref{prop:lattice-count-v4} gives
\begin{equation}\label{eq:zero-sector-bound-v37}
 \limsup_{Y\to\infty}Y^{-\kappa n}
 \#\bigl(\Gamma_j\cap B_{j,Y}(\Theta,\Omega)\bigr)
 \le \frac{\varepsilon}{\kappa n\,\vol(G/\Gamma_j)}.
\end{equation}
Since $\varepsilon$ is arbitrary, this contribution is $o(Y^{\kappa n})$.
Summing over the finitely many $j$ and using
Proposition~\ref{prop:sector-volume-v4} proves
\eqref{eq:sector-equidistribution} in the piecewise-$C^1$ case.

For general continuity sets, approximate $\Theta$ and $\Omega$ from
inside and outside by finite unions of coordinate sets with piecewise
$C^1$ boundary, with arbitrarily small
$\sigma_{E,d}$- and $\nu_{E,d}$-measure gaps.  The outer approximation of
$\Omega$ may be kept in a fixed relatively compact neighborhood of
$\overline\Omega$.  Applying the preceding case to the inner and outer
approximations and letting the measure gaps tend to zero proves
\eqref{eq:sector-equidistribution}.
\end{proof}

\begin{proof}[Proof of Theorem~\ref{thm:general-equidistribution}]
Assume first that $H(E)=1$.  For $0<\alpha<\beta$, a
$\sigma_{E,d}$-continuity set $\Theta\subset\Gr_{d,\infty}(E)$, and a
relatively compact $\nu_{E,d}$-continuity set
$\Omega\subset\cY_{E;d,e}$, Proposition~\ref{prop:sector-equidistribution} applied
with cutoffs $\beta Y$ and $\alpha Y$ gives
\begin{equation}\label{eq:continuity-rectangle-limit-v33}\begin{aligned}
 &Y^{-\kappa n}\#\left\{W:\begin{array}{l}
       \dim_KW=d,\ \alpha<H_E(W)/Y\le\beta,\ W_\infty\in\Theta,\\
       ([\widehat W],[\widehat{E/W}])\in\Omega
       \end{array}\right\}\\
 &\qquad\longrightarrow
 \mathfrak a_{K,n,d}(\beta^{\kappa n}-\alpha^{\kappa n})
 \sigma_{E,d}(\Theta)\nu_{E,d}(\Omega)\\
 &\qquad=
 \mathfrak a_{K,n,d}
 \left(\int_\alpha^\beta \kappa n\,r^{\kappa n-1}\,dr\right)
 \sigma_{E,d}(\Theta)\nu_{E,d}(\Omega).
\end{aligned}\end{equation}
The limit \eqref{eq:continuity-rectangle-limit-v33} holds for continuity rectangles
in $(0,\infty)\times\Gr_{d,\infty}(E)\times\cY_{E;d,e}$.  If
$f$ is compactly supported and continuous on this product, choose a
relatively compact continuity rectangle containing $\operatorname{supp}f$.
The rectangle asymptotic gives a uniform bound for the normalized counting
measures of this set.  By regularity and uniform continuity, $f$ is uniformly
approximated there by finite linear combinations of indicators of continuity
rectangles.  Hence \eqref{eq:general-equidistribution} holds when $H(E)=1$.

For arbitrary $E$, use the height-one normalization $\widehat E$ from \eqref{eq:height-one-normalization-v33} and set $Y'=H(E)^{-d/n}Y$.  Then $H(\widehat E)=1$ and
\begin{equation}\label{eq:s3-d070-v33}
 H_{\widehat E}(W)=H(E)^{-d/n}H_E(W),
 \qquad
 \frac{H_{\widehat E}(W)}{Y'}=\frac{H_E(W)}Y.
\end{equation}
Scalar rescaling does not change $W_\infty$, $[\widehat W]$,
$[\widehat{E/W}]$, $\sigma_{E,d}$, or $\nu_{E,d}$.  Applying the
height-one case to $\widehat E$ with parameter $Y'$ and using
\begin{equation}\label{eq:s3-d071-v33}
 Y^{-\kappa n}=H(E)^{-\kappa d}(Y')^{-\kappa n}
\end{equation}
gives \eqref{eq:general-equidistribution}.
\end{proof}

\section{No escape of mass and consequences}

For $R,Y\ge1$, put
\begin{equation}\label{eq:no-escape-tail-counts-v33}\begin{aligned}
 \mathcal T_d^{(1)}(Y,R)
 &=\#\{W\subset E:\dim_KW=d,\ H_E(W)\le Y,
                    \ \Lambda_d(\widehat W)>R\},\\
 \mathcal T_d^{(2)}(Y,R)
 &=\#\{W\subset E:\dim_KW=d,\ H_E(W)\le Y,
                    \ \Lambda_e(\widehat{E/W})>R\}.
\end{aligned}\end{equation}
Here \(e=n-d\), and \(\Lambda_m\) denotes the last Roy--Thunder
successive minimum on an \(m\)-dimensional height-one adelic space.

\begin{proposition}
\label{prop:no-escape-estimate-v4}
For every $E$ and $1\le d<n$,
\begin{equation}\label{eq:s4-d002-v33}
 \lim_{R\to\infty}\limsup_{Y\to\infty}Y^{-\kappa n}
 \bigl(\mathcal T_d^{(1)}(Y,R)+\mathcal T_d^{(2)}(Y,R)\bigr)=0.
\end{equation}
\end{proposition}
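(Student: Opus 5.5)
We first reduce to $H(E)=1$: rescaling $E$ to $\widehat E$ changes $H_E(W)$ by the fixed factor $H(E)^{d/n}$ and does not change $\widehat W$ or $\widehat{E/W}$. The two tails are then handled separately. For $\mathcal T_d^{(1)}$ we use a flag-counting argument. For $\mathcal T_d^{(2)}$ we use duality: the annihilator $W^\perp\subset E^\vee$ has dimension $e$ and satisfies $H_{E^\vee}(W^\perp)=H_E(W)$. Combined with a transference inequality of the form $\Lambda_e(\widehat{E/W})\,\Lambda_1(\widehat{W^\perp})\asymp1$, or more usefully $\Lambda_e(\widehat{E/W})\ll\Lambda_e(\widehat{W^\perp})^{c}$ for some constant $c$ depending only on $n$ and $K$, this gives
\[
\mathcal T_d^{(2)}(Y,R)\le\mathcal T_e^{(1),E^\vee}(Y,cR^{1/c}).
\]
So it suffices to prove the statement for $\mathcal T^{(1)}$ with an arbitrary $E$ and an arbitrary $d$. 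The case $d=1$ is trivial, since $\Lambda_1(\widehat W)=1$. The transference step is obtained by comparing Roy--Thunder minima with the Minkowski successive minima of the associated $\mathcal O_K$-lattice in $\prod_{v\mid\infty}E_v$. Their products are bounded above and below by the height up to constants, and the classical duality $\lambda_i(L)\lambda_{n+1-i}(L^*)\asymp1$ then transfers.

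For $\mathcal T_d^{(1)}$ we use Lemma~\ref{lem:adapted-hyperplane}. If $\Lambda_d(\widehat W)>R$, then Minkowski's second theorem (Roy--Thunder version), $\prod_i\Lambda_i(\widehat W)\asymp1$, produces a $(d-1)$-dimensional subspace $U\subset W$ with
\[
H(U)\ll H(W)^{(d-1)/d}\,\Lambda_d(\widehat W)^{-1}\le H(W)^{(d-1)/d}R^{-1}.
\]
We then count pairs $U\subset W$. Given $U$ with $H(U)=u$, the $d$-spaces $W\supset U$ correspond to lines in $E/U$. The height of such a line is $H(W)/H(U)$ by the quotient formula \eqref{eq:s2-d018-v33}, and $E/U$ has height $1/u$. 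By Schanuel-type counting in $E/U$, already available from Theorem~\ref{thm:general-equidistribution} applied with $d=1$ together with a uniform upper bound, the number of $W\supset U$ with $H(W)\le Y$ is
\[
\ll \frac{(Y/u)^{\kappa(n-d+1)}}{H(E/U)^{\kappa}}+\text{lower order}
=Y^{\kappa(n-d+1)}u^{-\kappa(n-d)}+\dots
\]
We sum this over $U$ with $u\le Y^{(d-1)/d}/R$, using the count $\#\{U:H(U)\le u\}\ll u^{\kappa n}$. Partial summation gives
\[
\ll Y^{\kappa(n-d+1)}\Bigl(\tfrac{Y^{(d-1)/d}}{R}\Bigr)^{\kappa d}
= Y^{\kappa n}\,Y^{\kappa(1-d)+\kappa(d-1)}R^{-\kappa d}
=Y^{\kappa n}R^{-\kappa d}.
\]
This tends to $0$ after dividing by $Y^{\kappa n}$ and letting $R\to\infty$. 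We sum dyadically in $H(W)$ to keep track of the exponent. The count $\#\{U:H(U)\le u\}\ll u^{\kappa n}$ is only available asymptotically in this paper, so we use instead a crude uniform Thunder/Schmidt-type upper bound, proved by induction on $d$ from the same flag argument.

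The main obstacle is uniformity in the count of lines in $E/U$. The quotient $E/U$ varies, and its normalization may itself be far in the cusp, so Schanuel's asymptotic alone is not enough. We need an upper bound of the shape
\[
\#\{x\in\Pj(F):H_F(x)\le T\}\ll T^{\kappa m}H(F)^{-\kappa}+\text{(terms governed by the first minima of }F),
\]
uniform over all rigid adelic spaces $F$ of dimension $m$. We first try to prove this by lattice-point counting in the Minkowski lattice: the number of primitive vectors in a ball of radius $T$ is at most $\ll\prod_i\max(1,T/\lambda_i)$. We then check that the lower-order terms, involving the small minima of $E/U$, still sum to $o(Y^{\kappa n})$ uniformly in $R$, using $\Lambda_1(E/U)\gg$ a bound derived from the heights of $W$. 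If this check fails, we fall back on choosing $U$ more carefully, as the span of the first $d-1$ minima of $W$, so that $E/U$ inherits controlled minima.
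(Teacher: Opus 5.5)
The overall structure of your proposal matches the paper's: the reduction to $H(E)=1$, the duality step, the use of Lemma~\ref{lem:adapted-hyperplane}, counting flags $U\subset W$ through lines in $E/U$, and the inductive crude Grassmannian bound. Your inequality $\Lambda_e(\widehat{E/W})\ll\Lambda_e(\widehat{W^\perp})^{e-1}$ is what the paper derives from $1\le\Lambda_1(F)\Lambda_r(F^\vee)$ and Minkowski's second theorem.

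The gap is at the step you call the main obstacle, and the remedy you propose does not close it. Put $V=E/U$, $u=H_E(U)$, $m=e+1$, $B=T/u$. The Henk-type bound gives $\#\{L\subset V:H_V(L)\le B\}\ll\sum_{k=0}^{m}B^{\kappa k}\bigl(\Lambda_1(V)\cdots\Lambda_k(V)\bigr)^{-\kappa}$. So you need lower bounds on every partial product $\Lambda_1(V)\cdots\Lambda_k(V)$ with $1\le k<m$, not just on $\Lambda_1(V)$. The heights of the $d$-planes $W\supset U$ only give $\Lambda_1(V)\ge b_E/u$. That yields $\Lambda_1(V)\cdots\Lambda_k(V)\ge(b_E/u)^k$, and hence a $k$-th term $\ll_E T^{\kappa k}$ with no decay in $u$. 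Sum this over the $\asymp A^{\kappa n}$ planes $U$ with $H_E(U)\le A\asymp R^{-1}T^{(d-1)/d}$. The term $k=e$ then contributes a bound of size $T^{\kappa(n+e-n/d)}R^{-\kappa n}$, which is not $O(T^{\kappa n})$ once $e>n/d$ (e.g.\ $d=2$, $n=5$). The same loss breaks your induction for $\#\{U:H_E(U)\le A\}\ll 1+A^{\kappa n}$, since it rests on the same per-$U$ count.

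Your fallback of taking $U$ to be the span of the first $d-1$ minima of $W$ does not help. The large terms are governed by low-height subspaces of $E$ containing $U$, and choosing $U$ inside $W$ does not control those. Note also that Theorem~\ref{thm:general-equidistribution} gives no upper bound at all. The asymptotic count of Theorem~\ref{thm:cumulative-total-v2} is deduced from the present proposition, so it is not available here either.

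The missing idea is to bound the whole successive-minimum flag $0=V_0\subset V_1\subset\cdots\subset V_m=V$ from below using the fixed ambient space $E$. For $1\le k<m$, the preimage $\widetilde V_k$ of $V_k$ in $E$ is a proper nonzero subspace. The quotient formula then gives $H_V(V_k)=H_E(\widetilde V_k)/H_E(U)\ge b_E/H_E(U)$, with $b_E>0$ as in \eqref{eq:beta-E}. By Lemma~\ref{lem:block-minima}(ii), $\Lambda_1(V)\cdots\Lambda_k(V)\asymp H_V(V_k)$. So the $k$-th term becomes $\ll_E T^{\kappa k}H_E(U)^{-\kappa(k-1)}$, which is Proposition~\ref{prop:projective-bound}(ii). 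Partial summation over $U$ gives $T^{\kappa k}A^{\kappa(n-k+1)}\le T^{\kappa n-\kappa(e+1-k)/d}\le T^{\kappa n-\kappa/d}$ for $T\ge1$ and $A\le T^{(d-1)/d}$. With this input, both \eqref{eq:grassmann-upper} and \eqref{eq:scaled-flag-bound} go through, and the rest of your argument coincides with the paper's.
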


\subsection{Adelic lattices and successive minima}

Let $F$ be a $r$-dimensional rigid adelic space.  Put
\begin{equation}\label{eq:s4-d005-v33}
 \mathcal M_F=
 \{x\in F:\|x\|_{F,v}\le1\text{ for every finite }v\}.
\end{equation}
At the archimedean places put
\begin{equation}\label{eq:s4-d007-v33}
 F_\infty=\prod_{v\mid\infty}F_v,
 \qquad
 |x|_F=\max_{v\mid\infty}\|x_v\|_{F,v},
 \qquad
 \mathcal B_F=\{x\in F_\infty:|x|_F\le1\}.
\end{equation}

The correspondence between rigid adelic spaces and Hermitian vector bundles over
$\operatorname{Spec}\cO_K$ identifies $\mathcal M_F$ with a projective
$\cO_K$-module of rank $r$ whose diagonal Minkowski image is a full lattice in
$F_\infty$; see \cite{Gaudron}*{\S2.2}.  For every finite place $v$,
\begin{equation}\label{eq:s4-d008-v33}
 \mathcal M_F\otimes_{\cO_K}\cO_v
 =\{x\in F_v:\|x\|_{F,v}\le1\}.
\end{equation}
If $S\subset F$ is a $K$-subspace, put
\begin{equation}\label{eq:s4-d009-v33}
 S_\infty:=\prod_{v\mid\infty}(S\otimes_K K_v).
\end{equation}
Flatness of $\cO_v$ over $\cO_K$ gives
\begin{equation}\label{eq:subspace-localization}
 (\mathcal M_F\cap S)\otimes_{\cO_K}\cO_v
 =\{x\in S\otimes_KK_v:\|x\|_{F,v}\le1\}.
\end{equation}
Thus $\mathcal M_S:=\mathcal M_F\cap S$ is projective of rank $\dim_KS$, and
its localizations are the unit balls of the restricted finite norms.

\begin{lemma}\label{lem:covol-height}
Let $S\subset F$ have dimension $s$, $1\le s\le r$, and put
$\mathcal B_S=\mathcal B_F\cap S_\infty$.  One has
\begin{equation}\label{eq:s4-d010-v33}
 \frac{\covol(\mathcal M_S)}{\vol(\mathcal B_S)}
 \asymp_{K,s}H_F(S)^\kappa.
\end{equation}
Here covolume and volume are taken with respect to Lebesgue measure in the
Minkowski coordinates on $S_\infty$.
\end{lemma}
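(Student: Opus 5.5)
Our plan is to write $H_F(S)^\kappa$ as an idelic determinant and then split it into a finite part and an archimedean part. The finite part will be matched with the covolume of $\mathcal M_S$, and the archimedean part with $\vol(\mathcal B_S)$.

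Fix a $K$-basis $x_1,\dots,x_s$ of $S$. By definition, $H_F(S)=H(\det S)$ with the induced structure, so
\[
 H_F(S)^\kappa=\prod_v\|x_1\wedge\cdots\wedge x_s\|_{S,v}^{n_v}.
\]
Each local factor is the $v$-adic volume of the unit ball of $S_v$ relative to the basis.

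\emph{Finite places.} By \eqref{eq:subspace-localization}, $\mathcal M_S\otimes\cO_v$ is the unit ball of $S_v$. For a non-archimedean norm the determinant norm $\|x_1\wedge\cdots\wedge x_s\|_{S,v}$ equals $|\det(\text{matrix of }x_i\text{ in an }\cO_v\text{-basis of the unit ball})|_v$. Hence
\[
 \prod_{v\nmid\infty}\|x_1\wedge\cdots\wedge x_s\|_{S,v}^{n_v}
 = N\bigl(\mathfrak a\bigr)^{-1}
 \quad\text{where}\quad
 \mathcal M_S=\mathfrak a\text{-index relative to }\textstyle\bigoplus\cO_Kx_i .
\]
More precisely, this product equals $[\mathcal M_S:\bigoplus\cO_Kx_i]_{\text{generalized}}^{-1}$, the index of $\bigoplus\cO_Kx_i$ in $\mathcal M_S$ measured as a fractional-ideal norm. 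This is the standard local–global index computation, using Steinitz and the fact that $\mathcal M_S$ is projective. On the other hand, $\covol(\bigoplus\cO_Kx_i)=\Delta_K^s\prod_{v\mid\infty}|\det(x_1,\dots,x_s)_v|_v^{n_v}$ up to the fixed normalization of Lebesgue measure on $S_\infty$ in the coordinates given by $x_i$, because $\cO_K$ has covolume $\Delta_K$. Dividing by the index gives
\[
 \covol(\mathcal M_S)=\Delta_K^s\prod_{v\nmid\infty}\|x_1\wedge\cdots\wedge x_s\|_{S,v}^{n_v}
\]
in those coordinates. The main care needed is to keep the Lebesgue normalization consistent; we will express everything in coordinates in which $S_v$ carries an orthonormal basis for $\|\cdot\|_{S,v}$, so that no basis-dependent factors remain.

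\emph{Archimedean places.} In orthonormal coordinates on each $S_v$ (Euclidean or Hermitian), the archimedean determinant factor $\|x_1\wedge\cdots\wedge x_s\|_{S,v}^{n_v}$ is exactly the Jacobian that converts Lebesgue measure in basis coordinates into orthonormal Lebesgue measure on $S_v\simeq\R^{s}$ or $\C^s\simeq\R^{2s}$. So after combining with the finite computation, $\covol(\mathcal M_S)=\Delta_K^sH_F(S)^\kappa$ exactly, with orthonormal Lebesgue measure on $S_\infty$. It remains to compute $\mathcal B_S=\{x:\max_v\|x_v\|\le1\}$, which in these coordinates is a product of unit balls. Its volume is $V_s^{r_1}V_{2s}^{r_2}$, a constant depending only on $K$ and $s$. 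The ratio is therefore $\Delta_K^sH_F(S)^\kappa/(V_s^{r_1}V_{2s}^{r_2})$, which proves the claim with an equality up to an explicit constant.

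The step we expect to be the main obstacle is the finite-place index identity. It requires checking that the product of the local non-archimedean determinant norms equals the reciprocal of the generalized index $[\mathcal M_S:\bigoplus\cO_Kx_i]$, including the case where the lattice is not free. We would handle this place by place: at each $v$ both sides equal $|\det T_v|_v^{n_v}$, where $T_v$ is the change of basis from an $\cO_v$-basis of $\mathcal M_S\otimes\cO_v$ to $(x_i)$, and the index is the product of these local indices.
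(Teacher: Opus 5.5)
Your proposal is correct and is essentially the paper's argument: you fix a $K$-basis of $S$, match the finite part of $H_F(S)^\kappa$ with $\covol(\mathcal M_S)$ through the local unit balls (your generalized index of $\bigoplus\cO_Kx_i$ in $\mathcal M_S$ is the paper's $\prod_{v\nmid\infty}|\det A_v|_v^{n_v}$), and match the archimedean part with the unit-ball volumes; the only difference is that you pass to orthonormal coordinates, which is legitimate because the ratio does not depend on the choice of Lebesgue measure on $S_\infty$, and so you obtain the exact constant $\Delta_K^s/(V_s^{r_1}V_{2s}^{r_2})$ where the paper writes $\asymp_{K,s}$. One small cleanup: in the $x_i$-coordinates $\covol(\bigoplus\cO_Kx_i)$ is simply $\Delta_K^s$, since the archimedean determinant factor you attached to it only appears after changing to orthonormal coordinates, but your later formulas are already consistent with this.
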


\begin{proof}
Choose a $K$-basis $s_1,\ldots,s_s$ of $S$.  The real-linear
isomorphism
\begin{equation}\label{eq:s4-d011-v33}
 \prod_{v\mid\infty}K_v^s\longrightarrow S_\infty,
 \qquad (z_1,\ldots,z_s)\longmapsto\sum_{i=1}^s z_i s_i,
\end{equation}
multiplies $\covol(\mathcal M_S)$ and $\vol(\mathcal B_S)$ by the same
Jacobian.  Hence their ratio may be computed in these coordinates.
For each finite place $v$, choose $A_v\in\GL_s(K_v)$ such that
\begin{equation}\label{eq:s4-d012-v33}
 \left\{z\in K_v^s:\left\|\sum_{i=1}^s z_i s_i\right\|_{F,v}\le1\right\}
 =A_v\cO_v^s.
\end{equation}
Then
\begin{equation}\label{eq:s4-d013-v33}
 \|s_1\wedge\cdots\wedge s_s\|_{\det,v}=|\det A_v|_v^{-1}.
\end{equation}
By \eqref{eq:subspace-localization}, the preimage of $\mathcal M_S$ in
$K^s$ under \eqref{eq:s4-d011-v33} localizes to $A_v\cO_v^s$ at every
finite place.  Its top exterior power is therefore a fractional ideal
whose localization at $v$ is $(\det A_v)\cO_v$.  The covolume formula for
fractional ideals gives, in these coordinates,
\begin{equation}\label{eq:s4-d015-v33}
 \covol(\mathcal M_S)
 \asymp_{K,s}\prod_{v\nmid\infty}|\det A_v|_v^{-n_v}.
\end{equation}

At an archimedean place, a $K_v$-linear change of coordinates with
determinant $a_v$ has real Jacobian $|a_v|_v^{n_v}$.  Thus the local
unit-ball volume in the coordinates \eqref{eq:s4-d011-v33} is
\(
 \asymp_s
 \|s_1\wedge\cdots\wedge s_s\|_{\det,v}^{-n_v}
\).
Multiplying the finite and archimedean contributions gives
\begin{equation}\label{eq:s4-d016-v33}
 \frac{\covol(\mathcal M_S)}{\vol(\mathcal B_S)}
 \asymp_{K,s}
 \prod_{v\nmid\infty}|\det A_v|_v^{-n_v}
 \prod_{v\mid\infty}
 \|s_1\wedge\cdots\wedge s_s\|_{\det,v}^{n_v}
 \asymp_{K,s}H_F(S)^\kappa,
\end{equation}
where the last comparison follows from the definition of $H_F(S)$ and
$w_v=n_v/\kappa$.
\end{proof}

Recall that $\mathcal M_F$ is the $\cO_K$-lattice defined in
\eqref{eq:s4-d005-v33}, and that $H_F(L)$ denotes the induced height of a
$K$-line $L\subset F$.

\begin{lemma}\label{lem:unit-balance}
There is $C_{K,r}\ge1$ such that for every $r$-dimensional rigid adelic
space $F$ and every $K$-line $L\subset F$, there is
$0\ne y_L\in\mathcal M_F\cap L$ such that
\begin{equation}\label{eq:s4-d017-v33}
 \max_{v\mid\infty}\|y_L\|_{F,v}\le C_{K,r}H_F(L).
\end{equation}
If $L\ne L'$, then $y_L\ne y_{L'}$.
\end{lemma}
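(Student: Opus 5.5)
The plan is to pick $y_L$ inside the rank-one projective $\cO_K$-module $\mathcal M_L=\mathcal M_F\cap L$ by a Minkowski-type argument, and then rebalance its archimedean sizes with units. First, $\mathcal M_L$ is an invertible $\cO_K$-module, isomorphic to a fractional ideal $\mathfrak a$ times a generator. Fix $0\ne x\in L$. By \eqref{eq:subspace-localization}, $\mathcal M_L\otimes\cO_v=\{\lambda x:|\lambda|_v\le\|x\|_{F,v}^{-1}\}$, so $\mathcal M_L=\mathfrak a x$. Here $\mathfrak a\subset K$ is the fractional ideal with $\mathfrak a\cO_v=\{\lambda:|\lambda|_v\le\|x\|_{F,v}^{-1}\}$, and $N(\mathfrak a)^{-1}\asymp\prod_{v\nmid\infty}\|x\|_{F,v}^{n_v}$ up to the usual normalization of norms.

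Next I apply Minkowski's convex body theorem to $\mathfrak a\subset K_\infty$, using the weighted box $\{\lambda:|\lambda|_v\le c_v\}$ with $c_v=T\|x\|_{F,v}^{-1}$. Its volume is $\asymp_K T^\kappa\prod_{v\mid\infty}\|x\|_{F,v}^{-n_v}$ and the covolume of $\mathfrak a$ is $\asymp_K N(\mathfrak a)$. Choosing $T=C\prod_v\|x\|_{F,v}^{w_v}=C\,H_F(L)$ (product over all places) therefore makes the volume exceed $2^\kappa\covol(\mathfrak a)$. This yields $0\ne\lambda\in\mathfrak a$ with $\|\lambda x\|_{F,v}=|\lambda|_v\|x\|_{F,v}\le C H_F(L)$ at every archimedean $v$. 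Put $y_L=\lambda x\in\mathcal M_F\cap L$. The constant depends only on $K$; the dependence on $r$ is harmless, and because the problem is rank one, no unit-balancing via Dirichlet is needed — the weighted box already handles this. The main thing to verify is the exponent bookkeeping in $T^\kappa$ versus $\prod|\cdot|^{n_v}$, which works because $\sum_{v\mid\infty}n_v=\kappa$. It is also worth noting that $H_F(L)$ here is the height of the line, which for a line equals $H_F(x)$, so the choice of $T$ is consistent with \eqref{eq:s2-d005-v33}.

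For the last assertion, if $L\ne L'$ then $y_L\in L\setminus\{0\}$ and $y_{L'}\in L'\setminus\{0\}$. Since $L\cap L'=0$, it follows that $y_L\ne y_{L'}$. Here each $y_L$ is defined by a fixed choice for each line, so this is immediate. There is no genuine obstacle; the only care needed is the normalization of the lattice covolume of fractional ideals, which enters only through constants depending on $K$.
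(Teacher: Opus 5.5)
Your argument is correct, and it takes a different route from the paper's.

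\textbf{What the paper does.} It works in two stages. First it fixes, for each ideal class $\mathfrak c$, an integral representative $\mathfrak b_{\mathfrak c}$ and an element $0\ne\alpha_{\mathfrak c}\in\mathfrak b_{\mathfrak c}$. It writes $\mathcal M_F\cap L=\mathfrak b_{\mathfrak c}x$ and sets $y=\alpha_{\mathfrak c}x$. This makes the finite part $\prod_{v\nmid\infty}\|y\|_{F,v}^{w_v}$ of size $\asymp_K1$, so the archimedean weighted mean is $\asymp_K H_F(L)$. Second, it invokes Dirichlet's unit theorem to replace $y$ by $uy$ with $u\in\cO_K^\times$. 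After this, every archimedean norm is within a bounded factor of that mean.

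\textbf{What you do.} You handle the ideal class and the archimedean balancing in one step. You apply Minkowski's convex body theorem to the lattice $\mathfrak a\subset\prod_{v\mid\infty}K_v$, whose covolume is $\Delta_K N(\mathfrak a)$, and to the box $\{|\lambda|_v\le T\|x\|_{F,v}^{-1}\}$ adapted to $x$.

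\textbf{Comparison.} Your route is shorter. It does not need class-group representatives or the unit lattice as separate inputs, although both of those facts ultimately come from Minkowski anyway. It also gives an explicit constant, for instance $C_K=\max\{1,(4/\pi)^{r_2/\kappa}\Delta_K^{1/\kappa}\}$. The paper's route keeps the ideal-class normalization and the unit balancing visibly separate, which is more structural but not needed for the later applications. Those applications are Lemma~\ref{lem:block-minima} and Proposition~\ref{prop:projective-bound}, and they only use $0\ne y_L\in\mathcal M_F\cap L$, the archimedean bound, and injectivity; your construction provides all three.

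\textbf{A sign slip to fix.} Since $\mathfrak a\cO_v=\{\lambda:|\lambda|_v\le\|x\|_{F,v}^{-1}\}$, one has exactly $N(\mathfrak a)=\prod_{v\nmid\infty}\|x\|_{F,v}^{n_v}$, not its inverse. Large finite norms of $x$ make $\mathfrak a$ smaller, so its norm is larger. With the sign as you wrote it, the volume condition would not give $T\asymp H_F(L)$. With the correct sign it reads $T^\kappa\gg_K\prod_v\|x\|_{F,v}^{n_v}=H_F(L)^\kappa$. That is exactly the choice of $T$ you make, so the rest of your argument stands once this is corrected.
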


\begin{proof}
The rank-one projective module $\mathcal M_F\cap L$ determines an ideal
class $\mathfrak c$ of $K$.  For each ideal class $\mathfrak c$, fix an integral ideal
$\mathfrak b_{\mathfrak c}$ representing $\mathfrak c$ and choose
$0\ne\alpha_{\mathfrak c}\in\mathfrak b_{\mathfrak c}$.  Choose
$x\in L\setminus\{0\}$ such that
\begin{equation}\label{eq:s4-d018-v33}
 \mathcal M_F\cap L=\mathfrak b_{\mathfrak c} x,
\end{equation}
and put $y=\alpha_{\mathfrak c}x\in\mathcal M_F$.  By
\eqref{eq:subspace-localization}, for every finite place $v$,
\begin{equation}\label{eq:s4-d019-v33}
 \{z\in L\otimes_KK_v:\|z\|_{F,v}\le1\}
 =\mathfrak b_{\mathfrak c}\cO_v\,x.
\end{equation}
Choose a uniformizer $\pi_v$ of $K_v$ and write
$\mathfrak a_c\cO_v=\pi_v^{\beta_v}\cO_v$.  Homogeneity of the restricted norm gives
\begin{equation}\label{eq:s4-d020-v33}
 \|y\|_{F,v}=|\alpha_{\mathfrak c}|_v|\pi_v|_v^{-\beta_v}.
\end{equation}
Thus the finite-place product depends only on $\mathfrak c$.  Since the class group is finite
and the ideals $\mathfrak b_{\mathfrak c}$ and elements $\alpha_{\mathfrak c}$ are fixed,
\begin{equation}\label{eq:s4-d021-v33}
 \prod_{v\nmid\infty}\|y\|_{F,v}^{w_v}\asymp_K 1.
\end{equation}
Hence
\begin{equation}\label{eq:s4-d022-v33}
 \prod_{v\mid\infty}\|y\|_{F,v}^{w_v}
 \asymp_K H_F(L).
\end{equation}
Moreover, since $w_v=n_v/\kappa$,
\begin{equation}\label{eq:unit-balancing-hyperplane-v46}
 \sum_{v\mid\infty}n_v
 \left(
  \log\|y\|_{F,v}
  -\log\prod_{v'\mid\infty}\|y\|_{F,v'}^{w_{v'}}
 \right)=0.
\end{equation}
Dirichlet's unit theorem says that
\(\{(\log|u|_v)_{v\mid\infty}:u\in\cO_K^\times\}\)
is a lattice in this hyperplane.  Hence we may choose
$u\in\cO_K^\times$ so that
\[
 \left(
  \log\|uy\|_{F,v}
  -\log\prod_{v'\mid\infty}\|y\|_{F,v'}^{w_{v'}}
 \right)_{v\mid\infty}
\]
lies in a fixed bounded fundamental domain for that lattice.  It follows that
\begin{equation}\label{eq:s4-d027-v33}
 \max_{v\mid\infty}\|uy\|_{F,v}
 \ll_K \prod_{v\mid\infty}\|y\|_{F,v}^{w_v}
 \ll_K H_F(L).
\end{equation}
Taking $y_L=uy$ proves the estimate.  If $L\ne L'$, then
$(L\cap L')\setminus\{0\}=\varnothing$, so $y_L\ne y_{L'}$.
\end{proof}

Recall \(\mathcal M_F\), \(|x|_F\), and \(\mathcal B_F\) from
\eqref{eq:s4-d005-v33}--\eqref{eq:s4-d007-v33}, and recall that
\(\Lambda_i(F)\) denotes the $i$th Roy--Thunder successive minimum.

\begin{lemma}\label{lem:block-minima}
Let $F$ be an $r$-dimensional rigid adelic space over $K$.  Define
recursively $F_0=0$ and, for $1\le i\le r$, choose
$x_i\in\mathcal M_F\setminus F_{i-1}$ such that
\begin{equation}\label{eq:s4-d028-v33}
 |x_i|_F
 =\min\{|x|_F:x\in\mathcal M_F\setminus F_{i-1}\},
 \qquad
 F_i=F_{i-1}+Kx_i.
\end{equation}
Let $\lambda_1\le\cdots\le\lambda_{\kappa r}$ be the real successive minima
of $(\mathcal M_F,\mathcal B_F)$.  Then:
\begin{enumerate}[label=\textup{(\roman*)}]
\item for $1\le i\le r$ and $1\le j\le\kappa$,
\begin{equation}\label{eq:block-comparison}
 |x_i|_F\le\lambda_{(i-1)\kappa+j}\ll_K|x_i|_F;
\end{equation}
\item for $1\le i\le r$,
\begin{equation}\label{eq:flag-height-mu}
 H_F(F_i)\asymp_{K,r}\prod_{j=1}^i|x_j|_F,
 \qquad
 |x_i|_F\asymp_{K,r}\Lambda_i(F);
\end{equation}
\item
\begin{equation}\label{eq:s4-d029-v33}
 H(F)\asymp_{K,r}\prod_{i=1}^r\Lambda_i(F).
\end{equation}
\end{enumerate}
\end{lemma}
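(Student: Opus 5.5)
The plan is to compare everything with the real geometry of numbers of the $\cO_K$-lattice $\mathcal M_F\subset F_\infty$. The tools are Lemma~\ref{lem:covol-height}, Lemma~\ref{lem:unit-balance} and Minkowski's second theorem. Throughout, fix a $\Z$-basis $\omega_1,\ldots,\omega_\kappa$ of $\cO_K$ and put $c_K=\max_{j,v\mid\infty}|\omega_j|_v$.

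\emph{Step 1: part (i).}
\begin{itemize}[leftmargin=2em]
\item \emph{Lower bound.} Any $(i-1)\kappa+j$ real-linearly independent vectors of $\mathcal M_F$ cannot all lie in $(F_{i-1})_\infty$, which has real dimension $(i-1)\kappa$. Since $(F_{i-1})_\infty\cap F=F_{i-1}$, one of them lies in $\mathcal M_F\setminus F_{i-1}$. By the minimality defining $x_i$, that vector has norm at least $|x_i|_F$, so $\lambda_{(i-1)\kappa+j}\ge|x_i|_F$.
\item \emph{Upper bound.} The $x_l$ are $K$-linearly independent, so they are $K\otimes\R$-linearly independent in $F_\infty$. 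Hence the $i\kappa$ vectors $\omega_jx_l$ ($l\le i$, $1\le j\le\kappa$) are real-linearly independent and lie in $\mathcal M_F$, since $\mathcal M_F$ is an $\cO_K$-module.
\item Their norms satisfy $|\omega_jx_l|_F\le c_K|x_l|_F\le c_K|x_i|_F$, because the $|x_l|_F$ are nondecreasing in $l$. Therefore $\lambda_{(i-1)\kappa+j}\le\lambda_{i\kappa}\le c_K|x_i|_F$.
\end{itemize}

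\emph{Step 2: first half of part (ii).}
\begin{itemize}[leftmargin=2em]
\item Apply Step 1 to the subspace $F_i$ with its induced structure. Here $\mathcal M_{F_i}=\mathcal M_F\cap F_i$ and $\mathcal B_{F_i}=\mathcal B_F\cap(F_i)_\infty$.
\item The same vectors $x_1,\ldots,x_i$ are valid greedy choices for $F_i$. Indeed, $\mathcal M_{F_i}\setminus F_{l-1}\subset\mathcal M_F\setminus F_{l-1}$ and $x_l$ lies in the smaller set, so it still attains the minimum.
\item By Step 1, the $i\kappa$ real minima of $(\mathcal M_{F_i},\mathcal B_{F_i})$ have product $\asymp_K\prod_{l\le i}|x_l|_F^\kappa$.
\item Minkowski's second theorem, in the real space $(F_i)_\infty$ of dimension $i\kappa$, makes this product $\asymp_{\kappa i}\covol(\mathcal M_{F_i})/\vol(\mathcal B_{F_i})$.
\item By Lemma~\ref{lem:covol-height}, this ratio is $\asymp H_F(F_i)^\kappa$. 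Taking $\kappa$-th roots gives $H_F(F_i)\asymp\prod_{j\le i}|x_j|_F$.
\end{itemize}

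\emph{Step 3: second half of part (ii), namely $|x_i|_F\asymp\Lambda_i(F)$.}
\begin{itemize}[leftmargin=2em]
\item \emph{Upper bound on $\Lambda_i$.} For $x\in\mathcal M_F$ the finite norms are $\le1$, and $\sum_{v\mid\infty}w_v=1$. Hence $H_F(x)\le\prod_{v\mid\infty}\|x\|_{F,v}^{w_v}\le|x|_F$. Applying this to the independent vectors $x_1,\ldots,x_i$ gives $\Lambda_i(F)\le\max_{l\le i}|x_l|_F=|x_i|_F$.
\item \emph{Lower bound on $\Lambda_i$.} Take independent $z_1,\ldots,z_i$ attaining $\Lambda_i(F)$; they exist because the infimum is attained. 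Apply Lemma~\ref{lem:unit-balance} to the lines $Kz_l$ to get vectors $y_l\in\mathcal M_F\cap Kz_l$ with $|y_l|_F\le C_{K,r}\Lambda_i(F)$.
\item The $y_l$ are $K$-independent, so one of them lies outside the $(i-1)$-dimensional space $F_{i-1}$. Minimality of $x_i$ then gives $|x_i|_F\le C_{K,r}\Lambda_i(F)$.
\end{itemize}

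\emph{Step 4: part (iii).} This is Step 2 with $i=r$, combined with Step 3: $H(F)=H_F(F_r)\asymp\prod_{i\le r}|x_i|_F\asymp\prod_{i\le r}\Lambda_i(F)$.

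\emph{Main obstacle.} The delicate point is keeping all implied constants dependent only on $K$ and $r$, uniformly over $F$ and over the subspaces $F_i$. This relies on the uniformity in Lemma~\ref{lem:covol-height}, which depends only on $K$ and $s$, and on the fact that Minkowski's constants depend only on the real dimension $i\kappa$. One also has to check that the greedy choices for $F_i$ coincide with those for $F$; the inclusion argument in Step 2 handles this.
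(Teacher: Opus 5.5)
Your proposal is correct and follows essentially the same route as the paper. You lower-bound the real minima by the dimension count against $(F_{i-1})_\infty$, upper-bound them using the vectors $\omega_a x_b$, and note that the greedy choices persist in $F_i$. You then combine Minkowski's second theorem with Lemma~\ref{lem:covol-height} for the flag heights, and use Lemma~\ref{lem:unit-balance} for $|x_i|_F\asymp\Lambda_i(F)$. The only cosmetic difference is that you use attainment of the infimum defining $\Lambda_i(F)$, where the paper takes $B>\Lambda_i(F)$ and lets $B\downarrow\Lambda_i(F)$.
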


\begin{proof}
The minimum in \eqref{eq:s4-d028-v33} is attained because
$\mathcal M_F$ is discrete in $F_\infty$, and
$|x_1|_F\le\cdots\le|x_r|_F$.  Since
$\dim_\R(F_{i-1})_\infty=\kappa(i-1)$, any $\kappa(i-1)+1$
real-linearly independent vectors of $\mathcal M_F$ include one outside
$F_{i-1}$.  Hence
\begin{equation}\label{eq:s4-d030-v33}
 \lambda_{(i-1)\kappa+1}\ge|x_i|_F.
\end{equation}
Fix an integral basis $\omega_1,\ldots,\omega_\kappa$ of $\cO_K$.  The
vectors
\begin{equation}\label{eq:s4-d031-v33}
 \omega_a x_b\qquad(1\le a\le\kappa,\ 1\le b\le i)
\end{equation}
are real-linearly independent, belong to $\mathcal M_F$, and satisfy
$|\omega_ax_b|_F\ll_K|x_i|_F$.  Thus
$\lambda_{\kappa i}\ll_K|x_i|_F$, proving (i).

For $j\le i$, the recursive minimum defining $x_j$ is unchanged after
restricting to $\mathcal M_F\cap F_i$: every vector of
$(\mathcal M_F\cap F_i)\setminus F_{j-1}$ is admissible in
\eqref{eq:s4-d028-v33}, while $x_j\in F_i$ attains the minimum.  Minkowski's second theorem \cite{BombieriGubler}*{Appendix~C.2}, (i),
and Lemma~\ref{lem:covol-height} give
\begin{equation}\label{eq:s4-d032-v33}
 \left(\prod_{j=1}^i|x_j|_F\right)^\kappa
 \asymp_{K,r}
 \frac{\covol(\mathcal M_F\cap F_i)}
      {\vol\bigl(\mathcal B_F\cap(F_i)_\infty\bigr)}
 \asymp_{K,r}H_F(F_i)^\kappa.
\end{equation}
This proves the first estimate in (ii).

For every $j\le i$, the finite norms of $x_j$ are at most $1$, and hence
\begin{equation}\label{eq:s4-d033-v33}
 H_F(Kx_j)\le |x_j|_F\le|x_i|_F.
\end{equation}
Thus $\Lambda_i(F)\le|x_i|_F$.  Conversely, let $B>\Lambda_i(F)$ and
choose $i$ $K$-linearly independent lines of height at most $B$.  By
Lemma~\ref{lem:unit-balance} they contain $i$ independent vectors of
$\mathcal M_F$ with archimedean norm at most $C_{K,r}B$.  One of these
vectors lies outside $F_{i-1}$, so $|x_i|_F\le C_{K,r}B$.  Letting
$B\downarrow\Lambda_i(F)$ proves the second estimate in (ii).

Taking $i=r$ in the first estimate of \textup{(ii)} and using the second
estimate for $1\le j\le r$ proves \textup{(iii)}.
\end{proof}

Recall that \(\Lambda_r(F)\) is the last Roy--Thunder minimum and
\(\widehat F\) denotes the height-one normalization of \(F\).

\begin{lemma}\label{lem:adapted-hyperplane}
Let $F$ be an $r$-dimensional rigid adelic space over $K$, with $r\ge2$.
There is a hyperplane $U\subset F$ such that
\begin{equation}\label{eq:s4-d035-v33}
 H_F(U)\ll_{K,r}H(F)^{(r-1)/r}.
\end{equation}
If $R>0$ and $\Lambda_r(\widehat F)>R$, $U$ may be chosen so that
\begin{equation}\label{eq:s4-d036-v33}
 H_F(U)\ll_{K,r}R^{-1}H(F)^{(r-1)/r}.
\end{equation}
\end{lemma}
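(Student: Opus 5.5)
Take $U=F_{r-1}$, the span of the first $r-1$ recursively chosen vectors $x_1,\dots,x_{r-1}$ of Lemma~\ref{lem:block-minima}. Both estimates then come from the successive-minimum comparisons in that lemma, so no new geometry of numbers is needed.

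\textbf{Height of $U$.} By Lemma~\ref{lem:block-minima}(ii) with $i=r-1$,
\[
 H_F(F_{r-1})\asymp_{K,r}\prod_{j=1}^{r-1}|x_j|_F\asymp_{K,r}\prod_{j=1}^{r-1}\Lambda_j(F).
\]
By Lemma~\ref{lem:block-minima}(iii), $\prod_{j=1}^{r}\Lambda_j(F)\asymp_{K,r}H(F)$. Dividing gives
\[
 H_F(U)\asymp_{K,r}H(F)\,\Lambda_r(F)^{-1}.
\]

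\textbf{First estimate.} Since the minima are nondecreasing, $\Lambda_r(F)^r\ge\prod_j\Lambda_j(F)\gg_{K,r}H(F)$. Hence $\Lambda_r(F)\gg H(F)^{1/r}$, and therefore $H_F(U)\ll H(F)^{1-1/r}=H(F)^{(r-1)/r}$. This is \eqref{eq:s4-d035-v33}.

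\textbf{Second estimate.} By \eqref{eq:s2-d014-v33}, $\Lambda_r(F)=H(F)^{1/r}\Lambda_r(\widehat F)$. If $\Lambda_r(\widehat F)>R$, then $\Lambda_r(F)>R\,H(F)^{1/r}$, and substituting into the displayed relation gives $H_F(U)\ll R^{-1}H(F)^{(r-1)/r}$. This is \eqref{eq:s4-d036-v33}.

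\textbf{Points to check.} The main thing to verify is that the implied constants in Lemma~\ref{lem:block-minima} depend only on $(K,r)$, so the bounds are uniform in $F$. They do: they come from Minkowski's second theorem, Lemma~\ref{lem:covol-height} and Lemma~\ref{lem:unit-balance}. Two further small points:
\begin{itemize}
\item $F_{r-1}$ really is a hyperplane. This holds because $x_1,\dots,x_{r-1}$ are chosen successively outside the previous spans, so they are $K$-linearly independent.
\item The heights used are consistent. $H_F(U)$ here is the induced height $H(U)$ of the subspace, which is the convention used in Lemma~\ref{lem:block-minima}(ii).
\end{itemize}
Alternatively, one could work with the normalized space and use $H_{\widehat F}(U)=H(F)^{-(r-1)/r}H_F(U)$ from \eqref{eq:s2-d012-v33}; this gives the same result, since the estimate is scale-invariant.
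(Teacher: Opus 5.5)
Your proof is correct, but it reaches the key estimate $H_F(U)\ll_{K,r}H(F)/\Lambda_r(F)$ by a different route from the paper.

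\textbf{The paper's route.} It takes $U=\ker\varphi$ for a nearly shortest dual vector $\varphi\in F^\vee$. It then combines Gaudron's transference theorem $\Lambda_r(F)\Lambda_1(F^\vee)\ll_{K,r}1$ with the duality formula $H_F(\ker\varphi)=H(F)H_{F^\vee}(K\varphi)$ from \cite{Gaudron}*{Proposition~5}. The remaining two steps are the same as yours. Lemma~\ref{lem:block-minima}(iii) with monotonicity gives $\Lambda_r(F)\gg_{K,r}H(F)^{1/r}$. The rescaling $\Lambda_r(F)=H(F)^{1/r}\Lambda_r(\widehat F)$ then gives the $R^{-1}$ gain.

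\textbf{Your route.} You take $U=F_{r-1}$ from the recursive construction. You then divide the estimates of Lemma~\ref{lem:block-minima}(ii) at $i=r-1$ and $i=r$. The only geometry-of-numbers input is Minkowski's second theorem, as already packaged in that lemma together with Lemmas~\ref{lem:covol-height} and~\ref{lem:unit-balance}. Your uniformity concern is settled by those constants depending only on $(K,r)$.

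\textbf{Comparison.} Your version is more self-contained: it needs neither an external transference theorem nor the dual height formula. It also gives the two-sided relation $H_F(F_{r-1})\asymp_{K,r}H(F)/\Lambda_r(F)$ for free. Your hyperplane is the same flag member later used in Proposition~\ref{prop:projective-bound}. The paper's version is shorter once Gaudron's results are available. It is also dual in spirit, matching the later passage to $W^\perp$ for the quotient tail.
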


\begin{proof}
Gaudron's transference theorem \cite{Gaudron}*{Theorem~36} gives
\begin{equation}\label{eq:s4-d037-v33}
 \Lambda_r(F)\Lambda_1(F^\vee)\ll_{K,r}1.
\end{equation}
Choose $0\ne\varphi\in F^\vee$ with
$H_{F^\vee}(\varphi)\le2\Lambda_1(F^\vee)$ and put $U=\ker\varphi$.
By \cite{Gaudron}*{Proposition~5},
\begin{equation}\label{eq:s4-d038-v33}
 H_F(U)=H(F)H_{F^\vee}(K\varphi)
 \ll_{K,r}\frac{H(F)}{\Lambda_r(F)}.
\end{equation}
By Lemma~\ref{lem:block-minima}(iii) and monotonicity,
\begin{equation}\label{eq:s4-d039-v33}
 H(F)\ll_{K,r}\prod_{i=1}^r\Lambda_i(F)\le\Lambda_r(F)^r.
\end{equation}
Hence $\Lambda_r(F)\gg_{K,r}H(F)^{1/r}$ and
$H_F(U)\ll_{K,r}H(F)^{(r-1)/r}$.  If
$\Lambda_r(\widehat F)>R$, then
\begin{equation}\label{eq:s4-d040-v33}
 \Lambda_r(F)=H(F)^{1/r}\Lambda_r(\widehat F)>R H(F)^{1/r},
\end{equation}
which gives the last estimate.
\end{proof}

Let $0\ne S\subsetneq E$ and put $s=\dim_KS$.  Lemma~\ref{lem:block-minima}(iii) gives
\begin{equation}\label{eq:s4-d041-v33}
 H_E(S)\asymp_{K,s}\prod_{i=1}^s\Lambda_i(S)
 \gg_{K,s}\prod_{i=1}^s\Lambda_i(E).
\end{equation}
Since $E$ is fixed and $1\le s<n$, there is $b_E>0$ such that
\begin{equation}\label{eq:beta-E}
 H_E(S)\ge b_E
 \qquad(0\ne S\subsetneq E).
\end{equation}

\subsection{Counting and no escape}

Recall that \(H_F(L)\) denotes the height of a $K$-line
\(L\subset F\).  For a subspace \(U\subset E\), the notation
\(H_E(U)\) denotes its induced subspace height.

\begin{proposition}
\label{prop:projective-bound}
\begin{enumerate}[label=\textup{(\roman*)}]
\item Let $F$ be an $r$-dimensional rigid adelic space over $K$.  There is a
flag
\begin{equation}\label{eq:s4-d042-v33}
 0=F_0\subset F_1\subset\cdots\subset F_r=F,
 \qquad\dim_KF_i=i,
\end{equation}
With the convention $H_F(F_0)=H_F(0)=1$, for every $B>0$,
\begin{equation}\label{eq:s4-d043-v33}
 \#\{L\subset F:\dim_KL=1,\ H_F(L)\le B\}
 \ll_{K,r}\sum_{i=0}^r\frac{B^{\kappa i}}{H_F(F_i)^\kappa}.
\end{equation}

\item Assume $H(E)=1$ and $1\le r<n$.  Let $U\subset E$ have dimension
$r-1$ and set $m=n-r+1$.  Then, for $T>0$,
\begin{equation}\label{eq:s4-d045-v33}\begin{aligned}
 &\#\{W:U\subset W\subset E,\ \dim_KW=r,\ H_E(W)\le T\}\\
 &\qquad\ll_E
 1+T^{\kappa m}H_E(U)^{-\kappa(m-1)}
 +\sum_{i=1}^{m-1}T^{\kappa i}H_E(U)^{-\kappa(i-1)}.
\end{aligned}\end{equation}
\end{enumerate}
\end{proposition}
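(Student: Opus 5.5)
For part (i), I will take the flag $F_i$ built in Lemma~\ref{lem:block-minima} from the recursively chosen vectors $x_1,\ldots,x_r\in\mathcal M_F$, and reduce to a lattice-point count in $F_\infty$.

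By Lemma~\ref{lem:unit-balance}, each line $L$ with $H_F(L)\le B$ gives a nonzero $y_L\in\mathcal M_F\cap L$ with $|y_L|_F\le C_{K,r}B$, and distinct lines give distinct vectors. Hence the line count is at most $\#\{y\in\mathcal M_F:|y|_F\le CB\}$. This is the number of points of the $\kappa r$-dimensional lattice $\mathcal M_F$ in the dilate $CB\,\mathcal B_F$ of a convex symmetric body. The standard successive-minima bound gives
\[
 \#\bigl(\mathcal M_F\cap CB\,\mathcal B_F\bigr)\ll\prod_{k=1}^{\kappa r}\max\bigl(1,CB/\lambda_k\bigr).
\]
This can be proved by a reduced-basis argument, or Henk's bound suffices.

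Grouping the indices into blocks of $\kappa$, Lemma~\ref{lem:block-minima}(i) says that in block $i$ every $\lambda_k\asymp|x_i|_F$. So the product is $\ll\prod_{i=1}^r\max(1,B/|x_i|_F)^\kappa$. Since $|x_1|_F\le\cdots\le|x_r|_F$, this product equals $B^{\kappa i}/\prod_{j\le i}|x_j|_F^\kappa$, where $i$ is the last index with $|x_i|_F\le B$, or $1$ if there is no such index. By Lemma~\ref{lem:block-minima}(ii), $\prod_{j\le i}|x_j|_F\asymp H_F(F_i)$, so the product is bounded by one term of $\sum_i B^{\kappa i}H_F(F_i)^{-\kappa}$. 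The $i=0$ term covers the case where no $x_i$ is that small. The one delicate point is that the lattice-count bound must be stated with constants depending only on $\kappa r$.

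For part (ii), I will pass to the quotient $F=E/U$, which has dimension $m$. The $r$-dimensional $W\supset U$ correspond bijectively to lines $L=W/U\subset F$. The quotient-height formula \cite{Gaudron}*{Proposition~5}, applied to the subspace $U\subset W$, gives $H_E(W)=H_E(U)\,H_F(L)$, and it also gives $H(F)=H(E)/H_E(U)=H_E(U)^{-1}$. So $H_E(W)\le T$ means $H_F(L)\le B:=T/H_E(U)$.

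Apply (i) to $F$ with its flag $F_i$. For $1\le i\le m-1$, the preimage $\widetilde F_i$ of $F_i$ in $E$ is a proper nonzero subspace with $H_E(\widetilde F_i)=H_E(U)H_F(F_i)$. By \eqref{eq:beta-E} this gives $H_F(F_i)\ge b_E/H_E(U)$, so
\[
 B^{\kappa i}H_F(F_i)^{-\kappa}\ll_E T^{\kappa i}H_E(U)^{-\kappa i}H_E(U)^{\kappa}=T^{\kappa i}H_E(U)^{-\kappa(i-1)}.
\]
For $i=m$, we have $H_F(F_m)=H(F)=H_E(U)^{-1}$, so the term is $T^{\kappa m}H_E(U)^{-\kappa m}H_E(U)^{\kappa}=T^{\kappa m}H_E(U)^{-\kappa(m-1)}$. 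The $i=0$ term is $1$. Summing the three cases gives \eqref{eq:s4-d045-v33}, with the implied constant depending on $K$, $n$ and $b_E$, hence on $E$.

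The case $r=1$, $U=0$, also fits this argument, since then $H_E(U)=1$. I expect the main obstacle to be part (i)'s lattice-point bound with uniform constants; part (ii) is bookkeeping once the height identities are in place.
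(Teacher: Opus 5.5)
Your proposal is correct and follows the paper's own proof essentially step for step. For part (i) you inject lines into lattice points via Lemma~\ref{lem:unit-balance}, then apply Henk's bound together with the block comparison of Lemma~\ref{lem:block-minima}(i) and the flag-height estimate in (ii); for part (ii) you pass to $E/U$ using Gaudron's quotient formula and the lower bound $b_E$. The only cosmetic difference is that you bound $\prod_i\max(1,B/|x_i|_F)^\kappa$ by its single dominant term, whereas the paper expands $\prod_i(1+B^\kappa/|x_i|_F^\kappa)$ into the full sum; the two are equivalent up to constants.
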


\begin{proof}
For (i), if $r=1$, there is only one $K$-line in $F$, so the assertion
holds.  Assume $r\ge2$.  Lemma~\ref{lem:unit-balance} assigns to every line $L$
with $H_F(L)\le B$ a distinct vector in
\begin{equation}\label{eq:s4-d046-v33}
 \mathcal M_F\cap C_{K,r}B\mathcal B_F.
\end{equation}
Choose $x_i$ and $F_i$ as in Lemma~\ref{lem:block-minima}.
Henk's lattice-point estimate \cite{Henk2002}*{Theorem~1.5}, applied to
$(\mathcal M_F,\mathcal B_F)$, together with \eqref{eq:block-comparison} and
$(1+x)^\kappa\ll_\kappa1+x^\kappa$, gives
\begin{equation}\label{eq:s4-d047-v33}\begin{aligned}
 \#(\mathcal M_F\cap C_{K,r}B\mathcal B_F)
 &\ll_{K,r}\prod_{i=1}^r
       \left(1+\frac{B}{|x_i|_F}\right)^\kappa\\
 &\ll_{K,r}\prod_{i=1}^r
       \left(1+\frac{B^\kappa}{|x_i|_F^\kappa}\right)\\
 &\ll_{K,r}\sum_{i=0}^r
       \frac{B^{\kappa i}}{(|x_1|_F\cdots|x_i|_F)^\kappa}.
\end{aligned}\end{equation}
The last inequality follows by expanding the product and using
$|x_1|_F\le\cdots\le|x_r|_F$.  Now
\eqref{eq:flag-height-mu} gives (i).

For (ii), put $V=E/U$.  The map $W\mapsto W/U$ is a bijection from the
subspaces counted in (ii) to the $K$-lines $L\subset V$ with
$H_V(L)\le T/H_E(U)$; by \cite{Gaudron}*{Proposition~5},
\begin{equation}\label{eq:s4-d048-v33}
 H_V(W/U)=\frac{H_E(W)}{H_E(U)},
 \qquad H(V)=\frac1{H_E(U)}.
\end{equation}
Apply (i) to $V$ with $B=T/H_E(U)$, and let
\begin{equation}\label{eq:s4-d049-v33}
 0=V_0\subset V_1\subset\cdots\subset V_m=V
\end{equation}
be the resulting flag.  For $1\le i<m$, the inverse image of $V_i$ in $E$
is a nonzero proper subspace.  Hence \eqref{eq:beta-E} and
\cite{Gaudron}*{Proposition~5} give
\begin{equation}\label{eq:s4-d050-v33}
 H_V(V_i)\ge\frac{b_E}{H_E(U)}.
\end{equation}
For $i=m$, since $V_m=V$,
\begin{equation}\label{eq:s4-d051-v33}
 \frac{(T/H_E(U))^{\kappa m}}{H(V)^\kappa}
 =T^{\kappa m}H_E(U)^{-\kappa(m-1)}.
\end{equation}
For $1\le i<m$,
\begin{equation}\label{eq:s4-d052-v33}
 \frac{(T/H_E(U))^{\kappa i}}{H_V(V_i)^\kappa}
 \ll_E T^{\kappa i}H_E(U)^{-\kappa(i-1)}.
\end{equation}
This proves (ii).
\end{proof}

Let $H$ be a height on a countable set, with $H\ge h_0>0$, and suppose
\begin{equation}\label{eq:s4-d053-v33}
 \#\{x:H(x)\le A\}\ll 1+A^q.
\end{equation}
For $0\le s<q$,
\begin{equation}\label{eq:weighted-sum}
 \sum_{H(x)\le A}H(x)^{-s}\ll 1+A^{q-s}.
\end{equation}
If $A\ge h_0$, Abel--Stieltjes summation, with the integral term interpreted
as zero when $s=0$, gives
\begin{equation}\label{eq:s4-d054-v33}
 \sum_{H(x)\le A}H(x)^{-s}
 =A^{-s}\#\{x:H(x)\le A\}
 +s\int_{h_0}^{A}t^{-s-1}\#\{x:H(x)\le t\}\,dt.
\end{equation}
The assumed counting bound gives
\eqref{eq:weighted-sum}; for $A<h_0$ the sum is empty.

For the fixed $n$-dimensional rigid adelic space $E$ and every
$1\le r<n$, we shall use the bound
\begin{equation}\label{eq:grassmann-upper}
 \#\{F\subset E:\dim_KF=r,\ H_E(F)\le X\}
 \ll_E 1+X^{\kappa n}
 \qquad(X>0).
\end{equation}

\begin{proof}[Proof of \eqref{eq:grassmann-upper}]
Replacing $E$ by its height-one normalization only rescales the height of an
$r$-dimensional subspace by a fixed factor.  Since the implied constant may
depend on $E$, it is enough to assume $H(E)=1$ and argue by induction on $r$.

For $r=1$, apply Proposition~\ref{prop:projective-bound}\textup{(i)} to the
fixed space $E$.  The heights of the members of the resulting flag are fixed
positive numbers, and every exponent occurring there is at most $\kappa n$.  Hence
\begin{equation}\label{eq:s4-d055-v33}
 \#\{L\subset E:\dim_KL=1,\ H_E(L)\le X\}
 \ll_E 1+X^{\kappa n}.
\end{equation}

Let $2\le r<n$ and assume the assertion for $(r-1)$-dimensional subspaces.
For every $r$-dimensional $F\subset E$ with $H_E(F)\le X$,
Lemma~\ref{lem:adapted-hyperplane} gives a hyperplane $U\subset F$ such that
\begin{equation}\label{eq:s4-d056-v33}
 H_E(U)=H_F(U)\ll_{K,r}H_E(F)^{(r-1)/r}
 \ll_{K,r}X^{(r-1)/r}.
\end{equation}
Choose
\begin{equation}\label{eq:s4-d057-v33}
 A\asymp_{K,r}X^{(r-1)/r}
\end{equation}
large enough that every such $U$ satisfies $H_E(U)\le A$, and put
$m=n-r+1$.  Counting the pairs $(U,F)$ overcounts the $F$'s, so
Proposition~\ref{prop:projective-bound}\textup{(ii)} gives
\begin{equation}\label{eq:s4-d058-v33}\begin{aligned}
 &\#\{F\subset E:\dim_KF=r,\ H_E(F)\le X\}\\
 &\qquad\le \sum_{\substack{U\subset E,\ \dim_KU=r-1\\H_E(U)\le A}}
 \#\{F\supset U:\dim_KF=r,\ H_E(F)\le X\}\\
 &\qquad\ll_E \sum_{H_E(U)\le A}
 \left(
 1+X^{\kappa m}H_E(U)^{-\kappa(m-1)}
 +\sum_{i=1}^{m-1}X^{\kappa i}H_E(U)^{-\kappa(i-1)}
 \right).
\end{aligned}\end{equation}
For bounded $X$ the assertion follows from Northcott, so assume $X\ge1$.
By the induction hypothesis and \eqref{eq:weighted-sum},
\begin{equation}\label{eq:s4-d059-v33}
 \#\{U:\dim_KU=r-1,\ H_E(U)\le A\}\ll_E1+A^{\kappa n},
\end{equation}
\begin{equation}\label{eq:s4-d060-v33}
 \sum_{H_E(U)\le A}H_E(U)^{-\kappa(m-1)}
 \ll_E1+A^{\kappa n-\kappa(m-1)}=1+A^{\kappa r},
\end{equation}
and, for $1\le i<m$,
\begin{equation}\label{eq:s4-d061-v33}
 \sum_{H_E(U)\le A}H_E(U)^{-\kappa(i-1)}
 \ll_E1+A^{\kappa n-\kappa(i-1)}.
\end{equation}
Now $A^{\kappa n}\ll_E X^{\kappa n(r-1)/r}\le X^{\kappa n}$, while
\begin{equation}\label{eq:s4-d062-v33}
 X^{\kappa m}A^{\kappa r}
 \ll_E X^{\kappa m+\kappa(r-1)}=X^{\kappa n}.
\end{equation}
Finally, for $1\le i<m$,
\begin{equation}\label{eq:s4-d063-v33}\begin{aligned}
 X^{\kappa i}A^{\kappa n-\kappa(i-1)}
 &\ll_E
 X^{\kappa i+\frac{r-1}{r}(\kappa n-\kappa(i-1))}\\
 &=X^{\kappa n-\kappa(m-i)/r}
 \le X^{\kappa n}.
\end{aligned}\end{equation}
The terms in the preceding estimates that do not contain a positive power of
$A$ are also $O(1+X^{\kappa n})$.  This proves \eqref{eq:grassmann-upper} and completes
the induction.
\end{proof}

Assume $H(E)=1$ and $2\le d<n$.  For a fixed $(d-1)$-plane $U$,
Proposition~\ref{prop:projective-bound}(ii), with $r=d$ and $m=e+1$, gives
\begin{equation}\label{eq:s4-d065-v33}
 \#\{W:U\subset W\subset E,\ \dim_KW=d,\ H_E(W)\le T\}
 \ll_E 1+T^{\kappa(e+1)}H_E(U)^{-\kappa e}
 +\sum_{i=1}^{e}T^{\kappa i}H_E(U)^{-\kappa(i-1)}.
\end{equation}
If there is at least one $(d-1)$-plane $U$ with $H_E(U)\le A$, then
$A\ge b_E$ by \eqref{eq:beta-E}.  Summing over such $U$ and applying
\eqref{eq:grassmann-upper} and \eqref{eq:weighted-sum} gives
\begin{equation}\label{eq:s4-d066-v33}
\begin{aligned}
 &\#\{(U,W):U\subset W\subset E,\ \dim_KU=d-1,\ \dim_KW=d,\\
 &\hspace{35mm}H_E(U)\le A,\ H_E(W)\le T\}\\
 &\qquad\ll_E 1+A^{\kappa n}
 +T^{\kappa(e+1)}A^{\kappa d}
 +\sum_{i=1}^{e}T^{\kappa i}A^{\kappa(n-i+1)}.
\end{aligned}
\end{equation}
The same estimate is trivial if no such $U$ exists.

For $T\ge1$ and $0<u\le1$, set $A=uT^{(d-1)/d}$.  Then
\begin{equation}\label{eq:s4-d067-v33}
 T^{\kappa(e+1)}A^{\kappa d}=u^{\kappa d}T^{\kappa n},
\end{equation}
and for $1\le i\le e$,
\begin{equation}\label{eq:s4-d068-v33}
 T^{\kappa i}A^{\kappa(n-i+1)}
 =u^{\kappa(n-i+1)}
 T^{\kappa n-\kappa(e+1-i)/d}
 \le T^{\kappa n-\kappa/d}.
\end{equation}
Since $1+A^{\kappa n}\ll T^{\kappa n-\kappa/d}$, this yields
\begin{equation}\label{eq:scaled-flag-bound}
\begin{aligned}
 &\#\{(U,W):U\subset W\subset E,\ \dim_KU=d-1,\ \dim_KW=d,\\
 &\hspace{20mm}H_E(U)\le uT^{(d-1)/d},\ H_E(W)\le T\}\\
 &\qquad\ll_E u^{\kappa d}T^{\kappa n}+T^{\kappa n-\kappa/d}.
\end{aligned}
\end{equation}

For every $r$-dimensional rigid adelic space $F$,
\begin{equation}\label{eq:s4-d069-v33}
 1\le\Lambda_1(F)\Lambda_r(F^\vee).
\end{equation}
For $\varepsilon>0$, choose $0\ne x\in F$ with
$H_F(x)\le\Lambda_1(F)+\varepsilon$ and linearly independent
$f_1,\ldots,f_r\in F^\vee$ with
$H_{F^\vee}(f_j)\le\Lambda_r(F^\vee)+\varepsilon$ for every $j$.
Some $f_j(x)$ is nonzero, and the local operator-norm inequality and the
product formula give
\begin{equation}\label{eq:s4-d070-v33}
 1=\prod_v|f_j(x)|_v^{w_v}
 \le H_F(x)H_{F^\vee}(f_j).
\end{equation}
Letting $\varepsilon\downarrow0$ proves the inequality.  If $H(F)=1$,
$r\ge2$, and $\Lambda_r(F)>R>0$, Lemma~\ref{lem:block-minima}(iii) and
monotonicity give
\begin{equation}\label{eq:s4-d071-v33}
 \Lambda_1(F)^{r-1}\Lambda_r(F)\ll_{K,r}1.
\end{equation}
Hence $\Lambda_1(F)\ll_{K,r}R^{-1/(r-1)}$, and therefore
\begin{equation}\label{eq:s4-d072-v33}
 \Lambda_r(F^\vee)\gg_{K,r}R^{1/(r-1)}.
\end{equation}

\begin{proof}[Proof of Proposition~\ref{prop:no-escape-estimate-v4}]
Assume first that $H(E)=1$.  If $d\ge2$, consider a dyadic shell
\begin{equation}\label{eq:s4-d073-v33}
 T/2<H_E(W)\le T,
\end{equation}
the condition $\Lambda_d(\widehat W)>R$ and
Lemma~\ref{lem:adapted-hyperplane} give a $(d-1)$-plane $U\subset W$ and a
constant $C_{K,d}>0$ such that
\begin{equation}\label{eq:s4-d074-v33}
 H_E(U)\le C_{K,d}R^{-1}T^{(d-1)/d}.
\end{equation}
For $T\ge1$ and $R\ge C_{K,d}$, apply
\eqref{eq:scaled-flag-bound} with $u=C_{K,d}/R$.  The number of such $W$ in
the shell is
\begin{equation}\label{eq:s4-d075-v33}
 O_E\bigl(R^{-\kappa d}T^{\kappa n}+T^{\kappa n-\kappa/d}\bigr).
\end{equation}
Summing over dyadic $T$ with $1\le T\ll Y$ gives
\begin{equation}\label{eq:s4-d076-v33}
 \mathcal T_d^{(1)}(Y,R)
 \ll_E R^{-\kappa d}Y^{\kappa n}+Y^{\kappa n-\kappa/d}+1;
\end{equation}
the range $H_E(W)\le1$ contributes $O_E(1)$ by
\eqref{eq:grassmann-upper}.  For $1\le R<C_{K,d}$ the same bound follows from
\eqref{eq:grassmann-upper}, after enlarging the constant.

For the quotient tail, let $Q=E/W$ and
\begin{equation}\label{eq:s4-d077-v33}
 W^\perp=\{\varphi\in E^\vee:\varphi|_W=0\}.
\end{equation}
Assume $H(E)=1$.  By \cite{Gaudron}*{Theorem~4 and Proposition~5}, the map
$W\mapsto W^\perp$ is a height-preserving bijection from the $d$-planes of
$E$ to the $e$-planes of $E^\vee$, and
\begin{equation}\label{eq:s4-d078-v33}
 Q^\vee\simeq W^\perp,\qquad
 H(E^\vee)=1,\qquad
 H_{E^\vee}(W^\perp)=H_E(W).
\end{equation}
Scaling the archimedean norms by $t$ scales the dual operator norms by
$t^{-1}$, while \cite{Gaudron}*{Proposition~5} gives
$H(Q^\vee)=H(Q)^{-1}$.  Hence
\begin{equation}\label{eq:s4-d079-v33}
 (\widehat Q)^\vee=\widehat{Q^\vee}\simeq\widehat{W^\perp}.
\end{equation}
If $e\ge2$ and $\Lambda_e(\widehat Q)>R$, the dual-minima estimate gives
\begin{equation}\label{eq:s4-d080-v33}
 \Lambda_e\bigl(\widehat{W^\perp}\bigr)
 \gg_{K,e}R^{1/(e-1)}.
\end{equation}
The estimate just proved, applied to $E^\vee$ with $d$ replaced by $e$ and threshold
$\asymp_{K,e}R^{1/(e-1)}$, gives
\begin{equation}\label{eq:s4-d081-v33}
 \mathcal T_d^{(2)}(Y,R)
 \ll_E R^{-\kappa e/(e-1)}Y^{\kappa n}+Y^{\kappa n-\kappa/e}+1
\end{equation}
for all sufficiently large $R$.  For bounded $R$, the same estimate follows
from \eqref{eq:grassmann-upper}, after enlarging the constant.

If $d=1$, then $\Lambda_1(\widehat W)=1$, and if $e=1$, then
$\Lambda_1(\widehat{E/W})=1$; hence that tail is zero for
$R>1$.  The two estimates above therefore prove \eqref{eq:s4-d002-v33}
when $H(E)=1$.

For general $E$, use the height-one normalization $\widehat E$ from \eqref{eq:height-one-normalization-v33}.  Then
\begin{equation}\label{eq:s4-d082-v33}
 H_{\widehat E}(W)=H(E)^{-d/n}H_E(W),
\end{equation}
and $\widehat W$ and $\widehat{E/W}$ are unchanged up to $K$-linear
isometry.  Hence the two counting functions for $E$ with cutoff $Y$ are the
same counting functions for $\widehat E$ with cutoff
$H(E)^{-d/n}Y$.  Applying the height-one case to $\widehat E$ proves
\eqref{eq:s4-d002-v33}.
\end{proof}

\subsection{Consequences}

Recall that \(\cX_{K,m}\) is the height-one shape space and that
\(\Lambda_i(L)\) denotes its Roy--Thunder successive minima.

\begin{proposition}\label{prop:shape-compactness}
For $m\ge1$, the function $L\mapsto\Lambda_m(L)$ on $\cX_{K,m}$ is
continuous.  For $R\ge1$, put
\begin{equation}\label{eq:s4-d083-v33}
 \cX_{K,m}(R)=\{L\in\cX_{K,m}:\Lambda_m(L)\le R\}.
\end{equation}
Then $\cX_{K,m}(R)$ is compact.
\end{proposition}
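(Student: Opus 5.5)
We prove continuity of $\Lambda_m$ on $\cX_{K,m}$ first, and then compactness of $\cX_{K,m}(R)$ by a Mahler-type criterion on the arithmetic quotient.

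\emph{Continuity.} It suffices to show that $g\mapsto\Lambda_m(F_g)$ is continuous on $\GL_m(\A_K)^1$, where $F_g$ is the adelic space with norms $\|g_vx\|_{m,v}$. This function is left $\Kc_m$-invariant and right $\GL_m(K)$-invariant, so it descends to the quotient $\cX_{K,m}$. Fix $g$ and take $g'$ in a small neighbourhood $U g$, where $U$ is an open subgroup of the form
\[
\prod_{v\in S}U_v\times\prod_{v\notin S}\GL_m(\cO_v)\cap\Kc_{m,v}.
\]
Here $S$ is a finite set containing the archimedean places and the finitely many places where $g_v\notin\GL_m(\cO_v)$, and each $U_v$ is a small neighbourhood of the identity.

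At finite places in $S$ we may shrink $U_v$ into the stabilizer of the $\|\cdot\|_{m,v}$-unit ball. Then the finite norms of $F_{g'}$ and $F_g$ coincide. At the archimedean places, $g'_vg_v^{-1}=1+O(\delta)$ gives
\[
\|g'_vx\|_{m,v}=(1+O(\delta))\|g_vx\|_{m,v}
\]
uniformly in $x$. Consequently
\[
H_{F_{g'}}(x)=(1+O(\delta))H_{F_g}(x)
\]
for every $x$. Since the Roy--Thunder minima \eqref{eq:RT-minima} are infima of maxima of such heights over the same index set, $\Lambda_i(F_{g'})=(1+O(\delta))\Lambda_i(F_g)$. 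This gives continuity. In particular $\cX_{K,m}(R)$ is closed.

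\emph{Compactness.} The main obstacle is compactness, since we need a reduction-theory criterion for the adelic quotient. The plan is to reduce to the classical Mahler criterion through the Minkowski lattice of Lemma~\ref{lem:block-minima}.

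Let $L=[F]\in\cX_{K,m}(R)$ with $H(F)=1$. Lemma~\ref{lem:block-minima}(iii) gives $\prod_i\Lambda_i(F)\asymp 1$. Combined with $\Lambda_i\le\Lambda_m\le R$, this yields
\[
\Lambda_1(F)\gg_{K,m}R^{-(m-1)}.
\]
By Lemma~\ref{lem:block-minima}(i)--(ii), the real successive minima of $(\mathcal M_F,\mathcal B_F)$ all lie in $[c_R,C_R]$, with constants depending only on $K,m,R$. By Lemma~\ref{lem:covol-height} applied with $S=F$, the covolume of $\mathcal M_F$ is $\asymp1$.

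Next we normalize the finite data. The projective $\cO_K$-module $\mathcal M_F$ has one of finitely many isomorphism classes, namely $\cO_K^{m-1}\oplus\mathfrak b$ with $\mathfrak b$ running over representatives of the class group. So after a $\GL_m(K)$ change of basis, $\mathcal M_F$ is one of finitely many fixed modules $M_1,\dots,M_h\subset K^m$.

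Correspondingly, we choose the representative $g$ with $g_f$ lying in finitely many fixed cosets $\Kc_{m,f}\beta_i$, where $\beta_i\in\GL_m(\A_{K,f})$. The archimedean part $g_\infty$ then realizes the Minkowski lattice $g_\infty M_i\subset K_\infty^m\cong\R^{\kappa m}$ with bounded covolume and minima bounded below. Apply Mahler's compactness criterion in $\GL_{\kappa m}(\R)$, restricted to the closed subgroup $\prod_{v\mid\infty}\GL_m(K_v)$ and to lattices stable under $\cO_K$. It shows that, modulo the stabilizer $\GL(M_i)$ and $\Kc_{m,\infty}$, the element $g_\infty$ ranges over a compact set. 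Here one uses that the $\cO_K$-structure is preserved under limits, so limits of such lattices are again $\cO_K$-modules isomorphic to $M_i$.

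It follows that $\cX_{K,m}(R)$ is contained in the image of a finite union of compact sets $C_i\times\Kc_{m,f}\beta_i$ under the continuous quotient map. Hence it is contained in a compact set, and being closed, it is compact.

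The delicate point is the compatibility just used: the lattices $g_\infty M_i$ must vary in a closed subset of the space of lattices in $\R^{\kappa m}$ that is homeomorphic to the arithmetic quotient $\GL(M_i)\backslash\prod_v\GL_m(K_v)$. We would verify this by noting that $\GL(M_i)$ is exactly the stabilizer of $M_i$ in $\GL_m(K)$. The orbit map is then a proper injective continuous map between the arithmetic quotient and the (closed) set of $\cO_K$-lattices, and properness follows from discreteness of $\GL(M_i)$.
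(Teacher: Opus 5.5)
Your continuity argument is the same as the paper's. Near a representative the finite norms can be frozen (your $U$ lies in $\GL_m(\cO_v)$ at finite places), while the archimedean norms change by a factor $1+O(\delta)$. So every height, and hence $\Lambda_m$, changes by $1+O(\delta)$.

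For compactness you take a genuinely different route. The paper turns $\Lambda_1(F)\gg_{K,m}R^{-(m-1)}$ into the statement that the adelic lattice $gK^m$ misses a fixed product neighbourhood $\prod_{v\mid\infty}\{\|x\|_{m,v}<\varepsilon\}\times\prod_{v\nmid\infty}\cO_v^m$. It then quotes McFeat's adelic Mahler theorem, which implicitly also supplies the identification of $\GL_m(\A_K)/\GL_m(K)$ with the space of adelic lattices. You instead:
\begin{itemize}
\item freeze the finite data using finiteness of Steinitz classes;
\item transfer the bounds on Roy--Thunder minima to the real minima of the Minkowski lattice via Lemma~\ref{lem:block-minima};
\item apply classical Mahler in $\R^{\kappa m}$.
\end{itemize}
This is more self-contained, using only classical Mahler and finiteness of the class group. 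The price is that it needs a reduction-theoretic lemma for $\cO_K$-lattices in place of McFeat. It also needs one small observation to land in $C_i\times\Kc_{m,f}\beta_i$: $\gamma\in\GL(M_i)$ preserves $M_i\otimes\widehat{\cO}_K=\beta_i^{-1}\widehat{\cO}_K^m$, so $\beta_i\gamma\beta_i^{-1}\in\Kc_{m,f}$. This is what lets you move the $\GL(M_i)$-ambiguity of $g_\infty$ into the finite component.

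\textbf{The gap: properness of the orbit map.} Put $H=\prod_{v\mid\infty}\GL_m(K_v)$ and consider $H/\GL(M_i)\to\{\text{lattices in }\R^{\kappa m}\}$, $h\mapsto hM_i$. (The quotient is on the right, not $\GL(M_i)\backslash H$.) Properness of this map does not follow from discreteness of $\GL(M_i)$. Discreteness of the stabilizer only gives injectivity: an irrational line acting on $\R^2/\Z^2$ has discrete (trivial) stabilizer but a non-closed orbit.

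What you need is that the orbit $HM_i$ is closed and that convergence lifts to $H/\GL(M_i)$. Your remark about preservation of the $\cO_K$-structure is the right input; it can be made precise as follows.
\begin{itemize}
\item Suppose $h_nM_i\to\Lambda$. Choose $\Z$-bases of $h_nM_i$ converging to a $\Z$-basis of $\Lambda$, viewed as matrices $b_n,b\in\GL_{\kappa m}(\R)$ with $b_n\Z^{\kappa m}=h_nM_i$ and $b\Z^{\kappa m}=\Lambda$.
\item For $\omega$ in a $\Z$-basis of $\cO_K$, the integer matrices $b_n^{-1}\omega b_n$ converge to $b^{-1}\omega b$, so they are eventually equal to it.
\item Hence $b_nb^{-1}$ commutes with $\cO_K\otimes\R=K_\infty$, i.e.\ $b_nb^{-1}\in H$, and $b_nb^{-1}\to1$.
\item Fix such an $n_0$ and put $h=bb_{n_0}^{-1}h_{n_0}$. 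Then $\Lambda=hM_i$, and $h_n^{-1}b_nb^{-1}h\in\operatorname{Stab}_H(M_i)=\GL(M_i)$, so $h_n\GL(M_i)\to h\GL(M_i)$.
\end{itemize}
This also proves your claim that the limit is an $\cO_K$-module isomorphic to $M_i$. With this lemma inserted, your argument is complete.
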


\begin{proof}
Let $L=[F]\in\cX_{K,m}(R)$ and choose
$g\in\GL_m(\A_K)^1$ representing $F$.  Since $H(F)=1$ and $\Lambda_i(F)\le\Lambda_m(F)\le R$,
Lemma~\ref{lem:block-minima}(iii) gives
\begin{equation}\label{eq:s4-d085-v33}
 \Lambda_1(F)\gg_{K,m}R^{-(m-1)}.
\end{equation}
The adelic lattice $gK^m\subset\A_K^m$, with $K^m$ diagonally embedded,
has fixed covolume, since $|\det g|_{\A_K}=1$.  Choose
$\varepsilon>0$, depending only on $K,m,R$, smaller than the preceding lower
bound, and consider the product neighborhood
\begin{equation}\label{eq:s4-d086-v33}
 \prod_{v\mid\infty}\{x\in K_v^m:\|x\|_{m,v}<\varepsilon\}
 \times\prod_{v\nmid\infty}\cO_v^m.
\end{equation}
If $0\ne gx$ with $x\in K^m$ lies in this neighborhood, then
$H_F(x)<\varepsilon$, a contradiction.  Thus the neighborhood contains no
nonzero point of $gK^m$.
McFeat's adelic analogue of Mahler's compactness theorem
\cite{McFeat}*{Part~II, \S\S3.1, 3.3} therefore gives a convergent
subsequence for every sequence of these lattices.  Covolume is continuous in
McFeat's lattice-space topology, so the limit again has the same covolume.
Under $g\mapsto gK^m$, the right quotient by $\GL_m(K)$ is the adelic
lattice space; quotienting further by the compact group $\Kc_m$
shows that $\cX_{K,m}(R)$ is relatively compact.

Let $L'=[F']$ tend to $L=[F]$.  The finite double-coset coordinate is
locally constant: in a fixed finite-adelic double-coset chart, the quotient
has a local archimedean section, and the finite-place isometry groups are
compact open.  Hence, for all sufficiently close $L'$, representatives may
be chosen with the same finite component as a representative of $L$; in
particular the finite norms agree.  For every $\delta>0$, after shrinking the
chart if necessary, the archimedean norms then satisfy
\begin{equation}\label{eq:s4-d087-v33}
 e^{-\delta}\|x\|_{F,v}
 \le \|x\|_{F',v}
 \le e^{\delta}\|x\|_{F,v}
 \qquad(v\mid\infty).
\end{equation}
Hence
\begin{equation}\label{eq:s4-d088-v33}
 e^{-\delta}H_F(x)\le H_{F'}(x)\le e^{\delta}H_F(x)
\end{equation}
for $0\ne x\in K^m$, and therefore
\begin{equation}\label{eq:s4-d089-v33}
 e^{-\delta}\Lambda_m(F)
 \le \Lambda_m(F')
 \le e^{\delta}\Lambda_m(F).
\end{equation}
Thus $L\mapsto\Lambda_m(L)$ is continuous.  Hence
$\cX_{K,m}(R)$ is closed, and therefore compact.
\end{proof}

\begin{proof}[Proof of Theorem~\ref{thm:no-escape-global-v2}]
The set in \eqref{eq:no-loss-general-v2} is contained in the union of the sets
counted by $\mathcal T_d^{(1)}(Y,R)$ and $\mathcal T_d^{(2)}(Y,R)$.
Proposition~\ref{prop:no-escape-estimate-v4} gives the result.
\end{proof}

\begin{proof}[Proof of Theorem~\ref{thm:cumulative-total-v2}]
Assume first that $H(E)=1$.  For $Y\ge1$, let
\begin{equation}\label{eq:s4-d092-v33}
 \eta_Y
 =Y^{-\kappa n}\!\!\sum_{\substack{W\subset E,\ \dim_KW=d\\H_E(W)\le Y}}
 \delta_{\left(W_\infty,([\widehat W],[\widehat{E/W}])\right)}.
\end{equation}
Proposition~\ref{prop:sector-equidistribution} and approximation by continuity rectangles
give
\begin{equation}\label{eq:s4-d093-v33}
 \eta_Y\xrightarrow{\mathrm v}
 \mathfrak a_{K,n,d}\,\sigma_{E,d}\otimes\nu_{E,d}
\end{equation}
on $\Gr_{d,\infty}(E)\times\cY_{E;d,e}$.

For $R\ge1$, put
\begin{equation}\label{eq:s4-d094-v33}
 \cY_{E;d,e}(R)
 :=\cY_{E;d,e}\cap
 \bigl(\cX_{K,d}(R)\times\cX_{K,e}(R)\bigr).
\end{equation}
The set $\cY_{E;d,e}(R)$ is compact by
Proposition~\ref{prop:shape-compactness} and continuity of the determinant maps.
Theorem~\ref{thm:no-escape-global-v2} gives
\begin{equation}\label{eq:s4-d095-v33}
 \lim_{R\to\infty}\limsup_{Y\to\infty}
 \eta_Y\bigl(\Gr_{d,\infty}(E)\times
 (\cY_{E;d,e}\setminus\cY_{E;d,e}(R))\bigr)=0.
\end{equation}
Since $\Gr_{d,\infty}(E)$ is compact, $(\eta_Y)$ is asymptotically tight.
For finite Radon measures on the locally compact space
$\Gr_{d,\infty}(E)\times\cY_{E;d,e}$, vague convergence
together with asymptotic tightness implies convergence against bounded continuous
functions.  In particular,
\begin{equation}\label{eq:s4-d096-v33}
 \eta_Y\bigl(\Gr_{d,\infty}(E)\times\cY_{E;d,e}\bigr)
 \longrightarrow\mathfrak a_{K,n,d}.
\end{equation}

For arbitrary $E$, set $Y'=H(E)^{-d/n}Y$ and use the height-one
normalization $\widehat E$ from \eqref{eq:height-one-normalization-v33}.  Then
\begin{equation}\label{eq:s4-d097-v33}
 \frac{H_{\widehat E}(W)}{Y'}=\frac{H_E(W)}Y.
\end{equation}
Under this rescaling, $W_\infty$, $[\widehat W]$, $[\widehat{E/W}]$,
$\sigma_{E,d}$, and $\nu_{E,d}$ are unchanged.  Since
\begin{equation}\label{eq:s4-d098-v33}
 Y^{-\kappa n}=H(E)^{-\kappa d}(Y')^{-\kappa n},
\end{equation}
the height-one case applied to $\widehat E$ gives
\eqref{eq:cumulative-global-v2} and \eqref{eq:total-subspace-count-v2}.
\end{proof}

\section{Applications}\label{sec:applications}

\subsection{The bounded-height asymptotic for \texorpdfstring{$K^n$}{K n}}

Take $E=K^n$ with the standard local norms.  Then $H(E)=1$, and
\eqref{eq:total-subspace-count-v2} gives
\begin{equation}\label{eq:standard-bounded-height-asymptotic}
 \#\{W\subset K^n:\dim_KW=d,\ H_E(W)\le Y\}
 \sim \mathfrak a_{K,n,d}\,Y^{\kappa n}
 \qquad (Y\to\infty).
\end{equation}
Here
\begin{equation}\label{eq:s5-d001-v33}
 \mathfrak a_{K,n,d}
 =
 \frac{h_KR_K}{w_Kn}
 \Delta_K^{-(d(n-d)+1)}
 \binom nd^{r_1+r_2}
 \mathcal V_{n,d}^{r_1}
 \bigl(\mathcal V^{(2)}_{n,d}\bigr)^{r_2}
 \mathcal Z_{K,n,d}^{-1},
\end{equation}
with the notation of Section~\ref{subsec:parameter-spaces}.
The normalization is related to Thunder's as follows.  In the notation
of \cite{Thunder1992}*{Theorem~1}, the local exponents in his $K$-relative
height are $n_v$, whereas ours are $w_v=n_v/\kappa$.  Thus, for the standard adelic structure, Thunder's height of $W$ is
$H_E(W)^\kappa$.  Accordingly, the condition $H_E(W)\le Y$ corresponds to
Thunder's cutoff $B=Y^\kappa$, and his main term $a(n,d)B^n$ becomes
$a(n,d)Y^{\kappa n}$.  The formula for $a(n,d)$ in that theorem agrees
term by term with \eqref{a}; hence
$\mathfrak a_{K,n,d}=a(n,d)$.

\subsection{Subspaces generated by bounded-height points}

Define
\begin{equation}\label{eq:s5-d003-v33}
 \cG_{E,d}(B)
 =\{W\subset E:\dim_KW=d,\ \Lambda_d(W)\le B\}
\end{equation}
and
\begin{equation}\label{eq:s5-d004-v33}
 \mathfrak M_{K,n,d}
 =\int_{\cX_{K,d}}
 \Lambda_d(L)^{-\kappa nd}\,d\mu_{K,d}(L).
\end{equation}

\begin{corollary}\label{cor:generated-subspaces}
Let $E$ be an $n$-dimensional rigid adelic space over $K$.  Then
\begin{equation}\label{eq:generated-subspaces}
 |\cG_{E,d}(B)|
 \sim
 \frac{\mathfrak a_{K,n,d}\mathfrak M_{K,n,d}}
      {H(E)^{\kappa d}}
 B^{\kappa nd}
 \qquad(B\to\infty).
\end{equation}
The integral defining $\mathfrak M_{K,n,d}$ is finite and positive.
\end{corollary}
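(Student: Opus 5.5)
Here $\Lambda_d(W)$ means the last Roy--Thunder minimum of $W$ with the induced structure. The plan is to rewrite the condition $\Lambda_d(W)\le B$ in the variables of Theorem~\ref{thm:cumulative-total-v2} and then integrate. By \eqref{eq:s2-d014-v33}, $\Lambda_d(W)=H_E(W)^{1/d}\Lambda_d(\widehat W)$, so
\[
\Lambda_d(W)\le B\iff H_E(W)\le B^d\Lambda_d(\widehat W)^{-d}.
\]
We also have $\Lambda_d(\widehat W)\gg_{K,d}1$ on $\cX_{K,d}$: by Lemma~\ref{lem:block-minima}(iii) and monotonicity, $1=H(\widehat W)\ll\Lambda_d(\widehat W)^d$. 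Hence every $W\in\cG_{E,d}(B)$ has $H_E(W)\ll B^d$. Put $Y=B^d$, so that $B^{\kappa nd}=Y^{\kappa n}$. Then
\[
|\cG_{E,d}(B)|=\sum_{W}\mathbf 1\bigl\{H_E(W)/Y\le \Lambda_d(\widehat W)^{-d}\bigr\},
\]
which is the joint counting measure of Theorem~\ref{thm:general-equidistribution} tested against the indicator of $\{(r,\cdot,(L_1,L_2)): r\le\Lambda_d(L_1)^{-d}\}$.

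The main term comes from Theorem~\ref{thm:general-equidistribution} with the density $\kappa n r^{\kappa n-1}dr$. Integrating in $r$ over $(0,\Lambda_d(L_1)^{-d}]$ gives $\Lambda_d(L_1)^{-\kappa nd}$. By \eqref{eq:marginals-v4}, the first marginal of $\nu_{E,d}$ is $\mu_{K,d}$, and $\sigma_{E,d}$ has mass one. The prospective limit is therefore $\mathfrak a_{K,n,d}H(E)^{-\kappa d}\mathfrak M_{K,n,d}$.

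To justify testing against this discontinuous, non-compactly supported function, I would proceed in three steps.
\begin{enumerate}[label=\textup{(\alph*)}]
\item \emph{Truncate the shape.} Restrict to $\Lambda_d(\widehat W)\le R$, which is a compact set by Proposition~\ref{prop:shape-compactness}. The complement has $H_E(W)\le Y$, so by Theorem~\ref{thm:no-escape-global-v2} it contributes $o_R(1)Y^{\kappa n}$ uniformly.
\item \emph{Truncate the height near zero.} On the truncated set, $r\ge R^{-d}\cdot c$ is not automatic. However, small $r$ contributes at most the count of $H_E(W)\le\delta Y$, which is $\ll\delta^{\kappa n}Y^{\kappa n}$ by \eqref{eq:total-subspace-count-v2}.
\item \emph{Pass to the limit on the compact region.} Here the test function is the indicator of $r\le\Lambda_d(L_1)^{-d}$, and $\Lambda_d$ is continuous by Proposition~\ref{prop:shape-compactness}. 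The boundary $\{r=\Lambda_d(L_1)^{-d}\}$ is a graph over the shape variable, so it is null for the absolutely continuous $dr$ factor. By the standard portmanteau argument (sandwich between continuous compactly supported functions), vague convergence applies.
\end{enumerate}
Letting $R\to\infty$ and $\delta\to0$, with monotone convergence for $\int\Lambda_d^{-\kappa nd}\mathbf 1_{\Lambda_d\le R}\,d\mu_{K,d}$, gives \eqref{eq:generated-subspaces}.

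For finiteness and positivity of $\mathfrak M_{K,n,d}$, note that $\Lambda_d\gg1$ on $\cX_{K,d}$, so the integrand is bounded and $\mathfrak M_{K,n,d}<\infty$. Positivity holds because $\Lambda_d<\infty$ everywhere and $\mu_{K,d}$ is a probability measure, so $\Lambda_d^{-\kappa nd}>0$. The case $d=1$ is trivial: $\Lambda_1(\widehat W)=1$ and $\mathfrak M=1$.

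I expect the main obstacle to be step~(c): handling the boundary null set and checking that the inequality $r\le\Lambda_d(L_1)^{-d}$ defines a continuity set for the product limit measure.
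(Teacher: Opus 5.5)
Your strategy is the paper's: rewrite $\Lambda_d(W)\le B$ as $H_E(W)/Y\le\Lambda_d([\widehat W])^{-d}$ with $Y=B^d$, remove small $r$ with the total count, remove large $\Lambda_d(\widehat W)$ with no-escape, and pass to the limit on the remaining region, whose graph boundary is null by Fubini. Your remark that a discarded $W$ with $\Lambda_d(\widehat W)>R\ge1$ already satisfies $H_E(W)<R^{-d}Y\le Y$ is correct. It is slightly sharper than the paper's bound by $\mathcal T_d^{(1)}(CY,R)$.

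Step (c) does not work as written. Theorem~\ref{thm:general-equidistribution} gives only vague convergence on $(0,\infty)\times\Gr_{d,\infty}(E)\times\cY_{E;d,e}$. Your region $\{\delta\le r\le\Lambda_d(L_1)^{-d},\ \Lambda_d(L_1)\le R\}$ is not relatively compact there. The space $\cY_{E;d,e}$ fixes only the determinant class of $L_2$, so for $e\ge2$ the quotient shape $L_2$ still ranges over a noncompact fiber of $\overline{\det}_e$. Vague convergence gives no control of mass escaping in that direction. There are two ways to fix this.

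\begin{itemize}
\item Also truncate $\Lambda_e(L_2)\le R$. This is legitimate because Theorem~\ref{thm:no-escape-global-v2} bounds the union of both tails, so it controls $\mathcal T_d^{(2)}(CY,R)$ with $C=\sup_L\Lambda_d(L)^{-d}$. The region then lies in the compact set $[\delta,C]\times\Gr_{d,\infty}(E)\times\cY_{E;d,e}(R)$.
\item Alternatively, do what the paper does. Project Theorem~\ref{thm:cumulative-total-v2}, which gives convergence against bounded continuous functions with cutoff $tY$, onto the $\cX_{K,d}$-coordinate using \eqref{eq:marginals-v4}. This gives vague convergence of $Y^{-\kappa n}\sum_W\delta_{(H_E(W)/Y,[\widehat W])}$ on $(0,\infty)\times\cX_{K,d}$. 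There your truncated region is genuinely relatively compact, and only the subspace tail is needed.
\end{itemize}

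The portmanteau step also needs the boundary piece $\{\Lambda_d(L_1)=R\}$ to be null, and likewise $\{\Lambda_e(L_2)=R\}$ if you truncate it. Choose $R$ outside the countably many atoms of $(\Lambda_d)_*\mu_{K,d}$ (and of $(\Lambda_e)_*\mu_{K,e}$), as the paper does, and let $R\to\infty$ along such values. With these repairs your argument is complete.
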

Corollary~\ref{cor:generated-subspaces} counts $d$-dimensional
$K$-subspaces of $E$ containing $d$ $K$-linearly independent projective
points of height at most $B$.

For $K=\Q$ and $E=\Q^n$ with the standard local norms, a projective point
is represented by a primitive integral vector $x$, and
$H_{\Q^n}([x])=\lVert x\rVert_2$.  Therefore, when $d=n-1$,
Corollary~\ref{cor:generated-subspaces} gives an asymptotic formula for the
number of hyperplanes containing $n-1$ linearly independent lattice points in
the Euclidean ball of radius $B$.  It refines \cite{BHPT}*{Theorem~3}, which
gives the order of magnitude $B^{n(n-1)}$, by providing an
asymptotic formula and its leading constant.

\begin{proof}[Proof of Corollary~\ref{cor:generated-subspaces}]
Put
\begin{equation}\label{eq:s5-d005-v33}
 \rho_Y
 =Y^{-\kappa n}\!\sum_{\substack{W\subset E\\ \dim_KW=d}}
 \delta_{(H_E(W)/Y,[\widehat W])}.
\end{equation}
Equation~\eqref{eq:grassmann-upper} shows that $\rho_Y$ is a Radon measure
on $(0,\infty)\times\cX_{K,d}$.  For $t>0$ and every bounded
continuous function $\Phi$ on $\cX_{K,d}$, Theorem~\ref{thm:cumulative-total-v2}, projected
to the $\cX_{K,d}$-coordinate, and \eqref{eq:marginals-v4} give
\begin{equation}\label{eq:s5-d006-v33}
 \int_{(0,t]\times\cX_{K,d}}\Phi(L)\,d\rho_Y(r,L)
 \longrightarrow
 \frac{\mathfrak a_{K,n,d}}{H(E)^{\kappa d}}\,t^{\kappa n}\int_{\cX_{K,d}}\Phi\,d\mu_{K,d}.
\end{equation}
Hence, for $0<\alpha<\beta$ and $\Phi\in C_c(\cX_{K,d})$,
\begin{equation}\label{eq:s5-d007-v33}
 \int_{(\alpha,\beta]\times\cX_{K,d}}\Phi(L)\,d\rho_Y(r,L)
 \longrightarrow
 \frac{\mathfrak a_{K,n,d}}{H(E)^{\kappa d}}\,(\beta^{\kappa n}-\alpha^{\kappa n})
 \int_{\cX_{K,d}}\Phi\,d\mu_{K,d}.
\end{equation}
Partitioning the first coordinate and using uniform continuity on compact sets
gives
\begin{equation}\label{eq:s5-d008-v33}
 \rho_Y\xrightarrow{\mathrm v}
 \rho:=\frac{\mathfrak a_{K,n,d}}{H(E)^{\kappa d}}\,\kappa n\,r^{\kappa n-1}\,dr\,d\mu_{K,d}
\end{equation}
on $(0,\infty)\times\cX_{K,d}$.

Set $Y=B^d$ and
\begin{equation}\label{eq:s5-d009-v33}
 f(L)=\Lambda_d(L)^{-d}.
\end{equation}
Since
\begin{equation}\label{eq:s5-d010-v33}
 \Lambda_d(W)=H_E(W)^{1/d}\Lambda_d(\widehat W),
\end{equation}
one has
\begin{equation}\label{eq:s5-d011-v33}
 \Lambda_d(W)\le B
 \quad\Longleftrightarrow\quad
 \frac{H_E(W)}Y\le f([\widehat W]).
\end{equation}
By Proposition~\ref{prop:shape-compactness}, $f$ is continuous.
Lemma~\ref{lem:block-minima}(iii) and monotonicity of the minima give
$\Lambda_d(L)\gg_{K,d}1$ on $\cX_{K,d}$.  Thus $f$ is bounded; fix
$C>0$ with $f\le C$.

Choose $R\ge1$ such that $\cX_{K,d}(R)$ is a
$\mu_{K,d}$-continuity set.  Such $R$ form a co-countable subset of
$[1,\infty)$, since the pushforward of $\mu_{K,d}$ by the continuous
function $\Lambda_d$ has at most countably many atoms.  For
$\varepsilon>0$, put
\begin{equation}\label{eq:s5-d012-v33}
 \mathcal A_{\varepsilon,R}
 =\{(r,L):L\in\cX_{K,d}(R),\ \varepsilon\le r\le f(L)\}.
\end{equation}
This set is relatively compact.  Its boundary is contained in
\begin{equation}\label{eq:s5-d013-v33}
 (\partial\cX_{K,d}(R)\times[\varepsilon,C])
 \cup(\{\varepsilon\}\times\cX_{K,d}(R))
 \cup\{(f(L),L):L\in\cX_{K,d}(R)\}.
\end{equation}
The first two sets have $\rho$-measure zero, and the last one has
$\rho$-measure zero by Fubini.  Interpret the inner integral below as zero
when $f(L)<\varepsilon$.  Then
\begin{equation}\label{eq:s5-d014-v33}
 \begin{aligned}
 &Y^{-\kappa n}\#\left\{W:
 [\widehat W]\in\cX_{K,d}(R),
 \ \varepsilon\le\frac{H_E(W)}Y\le f([\widehat W])\right\}\\
 &\qquad\longrightarrow
 \rho(\mathcal A_{\varepsilon,R})
 =\frac{\mathfrak a_{K,n,d}}{H(E)^{\kappa d}}\int_{\cX_{K,d}(R)}\int_{\varepsilon}^{f(L)}
 \kappa n\,r^{\kappa n-1}\,dr\,d\mu_{K,d}(L),
 \end{aligned}
\end{equation}

The elements of $\cG_{E,d}(B)$ omitted from this count satisfy either
$H_E(W)<\varepsilon Y$ or $[\widehat W]\notin\cX_{K,d}(R)$.  In the second case
$H_E(W)\le CY$, and therefore
\begin{equation}\label{eq:s5-d015-v33}
 \begin{aligned}
 0\le {}&Y^{-\kappa n}|\cG_{E,d}(B)|-\rho_Y(\mathcal A_{\varepsilon,R})\\
 \le {}&Y^{-\kappa n}
 \#\{W:\dim_KW=d,\ H_E(W)<\varepsilon Y\}
 +Y^{-\kappa n}\mathcal T_d^{(1)}(CY,R).
 \end{aligned}
\end{equation}
Theorem~\ref{thm:cumulative-total-v2} gives
\begin{equation}\label{eq:s5-d016-v33}
 \limsup_{Y\to\infty}Y^{-\kappa n}
 \#\{W:\dim_KW=d,\ H_E(W)<\varepsilon Y\}
 \le \frac{\mathfrak a_{K,n,d}}{H(E)^{\kappa d}}\,\varepsilon^{\kappa n},
\end{equation}
and Proposition~\ref{prop:no-escape-estimate-v4} gives
\begin{equation}\label{eq:s5-d017-v33}
 \lim_{R\to\infty}\limsup_{Y\to\infty}
 Y^{-\kappa n}\mathcal T_d^{(1)}(CY,R)=0.
\end{equation}
Thus, for fixed $\varepsilon$ and $R$,
\begin{equation}\label{eq:s5-d018-v33}
 \begin{aligned}
 \rho(\mathcal A_{\varepsilon,R})
 &\le \liminf_{Y\to\infty}Y^{-\kappa n}|\cG_{E,d}(B)|\\
 &\le \limsup_{Y\to\infty}Y^{-\kappa n}|\cG_{E,d}(B)|\\
 &\le \rho(\mathcal A_{\varepsilon,R})+\frac{\mathfrak a_{K,n,d}}{H(E)^{\kappa d}}\varepsilon^{\kappa n}
 +\limsup_{Y\to\infty}Y^{-\kappa n}\mathcal T_d^{(1)}(CY,R).
 \end{aligned}
\end{equation}
Let $R\to\infty$ through such continuity values and then let
$\varepsilon\downarrow0$.  Monotone convergence gives
\begin{equation}\label{eq:s5-d019-v33}
 \begin{aligned}
 B^{-\kappa nd}|\cG_{E,d}(B)|
 &\longrightarrow
 \frac{\mathfrak a_{K,n,d}}{H(E)^{\kappa d}}\int_{\cX_{K,d}}f(L)^{\kappa n}\,d\mu_{K,d}(L)\\
 &=\frac{\mathfrak a_{K,n,d}}{H(E)^{\kappa d}}\int_{\cX_{K,d}}
 \Lambda_d(L)^{-\kappa nd}\,d\mu_{K,d}(L)
 =\frac{\mathfrak a_{K,n,d}}{H(E)^{\kappa d}}\,\mathfrak M_{K,n,d},
 \end{aligned}
\end{equation}
which is \eqref{eq:generated-subspaces}.  Since
$\Lambda_d(L)\gg_{K,d}1$, the integrand is bounded and strictly positive.
Thus $0<\mathfrak M_{K,n,d}<\infty$.
\end{proof}

\appendix
\section{Gauge-form computation and the parabolic mass}\label{app:gauge-form}

This appendix proves Proposition~\ref{prop:parabolic-mass-W-v4}.  Put
\begin{equation}\label{eq:PNM-short-v18}
 \mathbf P=\mathbf P_d,\qquad
 \mathbf N=\mathbf N_d,\qquad
 \mathbf M=\mathbf M_d,
\end{equation}
and regard $a_d(t)$ as the adelic element whose archimedean components are
given by \eqref{ad} and whose finite components are the identity.  Write
\begin{equation}\label{eq:adelic-PM-one-v18}
 \mathbf P(\A_K)^1
 =\{p\in\mathbf P(\A_K):|\vartheta(p)|_{\A_K}=1\},
 \qquad
 \mathbf M(\A_K)^1
 =\{m\in\mathbf M(\A_K):|\vartheta(m)|_{\A_K}=1\}.
\end{equation}
For \(m\ge2\), put \(r_m(z)=\operatorname{diag}(z,I_{m-1})\), and set
\(r_1(z)=z\).  Define
\begin{equation}\label{eq:appA-d001-v33}
 \iota(z)=\bigl(r_d(z),r_e(z^{-1})\bigr).
\end{equation}

\medskip\noindent\emph{Multiplicative Haar measure.}

At a finite place, normalize multiplicative Haar measure by
\begin{equation}\label{eq:appA-d002-v33}
 \vol(\cO_v^\times)=1.
\end{equation}
At a real place, use the product of the probability measure on \(\{\pm1\}\)
and \(d\log r\) on the positive radius.  At a complex place, use the product
of \(d\theta/(2\pi)\) on \(S^1\) and
\(d\log r^2=2\,d\log r\) on the positive radius.  Write \(d^\times z_v\) for
these local multiplicative Haar measures and
\(d^\times z=\prod_v d^\times z_v\) for the resulting idele Haar measure.
The diagonal archimedean splitting
\begin{equation}\label{eq:appA-d003-v33}
 t\longmapsto (e^t)_{v\mid\infty}
\end{equation}
has idelic norm \(e^{\kappa t}\).  Define \(d^\times z^{(1)}\) on
\(\A_K^1\) by disintegrating \(d^\times z\) with respect to \(dt\).

With this normalization, the norm-one idele class group has volume
\begin{equation}
 \vol\bigl(\A_K^1/K^\times,d^\times z^{(1)}\bigr)
 =\frac{\kappa h_KR_K}{w_K}.
 \label{eq:prop23-4-1-v4}
\end{equation}
The factor \(\kappa\) is the Jacobian of the splitting above.  For
$z=(z_v)_v\in\A_K^\times$, put $r_v=|z_v|_v$ and
\(y_v=n_v\log r_v\) at the archimedean places.  The norm-one condition is
\begin{equation}\label{eq:appA-d004-v33}
 \sum_{v\mid\infty}y_v=0.
\end{equation}
The finite valuation map and the class-group decomposition give \(h_K\)
components, each represented by
\begin{equation}\label{eq:appA-d005-v33}
 \left(
 \mathcal U_K\times
 \left\{(y_v)_{v\mid\infty}\in\R^{s_\infty}:
        \sum_{v\mid\infty}y_v=0\right\}
 \right)\big/\cO_K^\times.
\end{equation}
Choose one archimedean place \(v_0\) and use the coordinates
\((y_v)_{v\ne v_0}\) on this hyperplane, with
\(y_{v_0}=-\sum_{v\ne v_0}y_v\); this Lebesgue measure is the one used in the definition of the regulator
$R_K$.  Decomposing the vector $(y_v)_{v\mid\infty}$ into its component in this hyperplane and the direction
$(n_v)_{v\mid\infty}$ gives a change of variables of determinant
$\sum_{v\mid\infty}n_v=\kappa$.  Hence
\(d^\times z^{(1)}\) induces \(\kappa\) times the product of Haar probability
on \(\mathcal U_K\) and the regulator measure on this hyperplane.
By Dirichlet's unit theorem, the logarithmic image of \(\cO_K^\times\)
has covolume \(R_K\) there and kernel of order \(w_K\).  Thus each
component has mass \(\kappa R_K/w_K\), and summing over the \(h_K\)
components gives \eqref{eq:prop23-4-1-v4}.  When \(s_\infty=1\), the
hyperplane is \(\{0\}\) and we take \(R_K=1\).

\medskip\noindent\emph{Local measures.}

Let \(dx_v\) be the additive Haar measure on \(K_v\), normalized by
\(\vol(\cO_v)=1\) at a finite place; at a real or complex place, use
ordinary Lebesgue measure.  If, in local coordinates,
\begin{equation}\label{eq:appA-d006-v33}
 \omega=f(x)\,dx_1\wedge\cdots\wedge dx_r,
\end{equation}
write
\begin{equation}\label{eq:appA-d007-v33}
 |\omega|_v=|f(x)|_v^{n_v}\,dx_v^{\otimes r}.
\end{equation}
For \(m\ge2\), let \(\omega_m\) be the left-invariant top form on
\(\SL_m\) whose value at the identity is one on the Chevalley basis
\begin{equation}\label{eq:appA-d008-v33}
 (E_{ij})_{i\ne j},\qquad
 H_i=E_{ii}-E_{i+1,i+1}\quad(1\le i<m),
\end{equation}
in any fixed order.  Put
\begin{equation}\label{eq:appA-d009-v33}
 ds_{m,v}=|\omega_m|_v,\qquad dg_v=ds_{n,v},
\end{equation}
and for \(m=1\) let \(ds_{1,v}\) be unit point mass.  Under
\begin{equation}\label{eq:appA-d010-v33}
 \mathbf N(K_v)\simeq\operatorname{Mat}_{d\times e}(K_v),\qquad
 X\longmapsto n(X),
\end{equation}
put
\begin{equation}\label{eq:appA-d011-v33}
 dn_v=
 \left|\bigwedge_{i=1}^d\bigwedge_{j=1}^e dX_{ij}\right|_v.
\end{equation}
Let \(dn_{\A}\) and \(ds_{m,\A}\) denote the Tamagawa measures
associated with these gauge forms; \(dg_{\A}\) is the Tamagawa measure
fixed above.  The measures \(dg_v,dn_v,ds_{m,v}\) are the
local gauge-form factors.

For \(m\ge1\), let
\begin{equation}\label{eq:appA-d012-v33}
 \mathsf K_{m,v}=
 \begin{cases}
  \SL_m(\cO_v),&v\nmid\infty,\\
  \operatorname{SO}(m),&K_v=\R,\\
  \operatorname{SU}(m),&K_v=\C.
 \end{cases}
\end{equation}
Let \(E_0=K^n\) with its standard rigid adelic structure, put
\(\Kc_0=\prod_v\mathsf K_{n,v}\), and set
\(H_{d,0,\A}=\Kc_0\cap\mathbf P(\A_K)\).

Recall that here \(\mathbf P=\mathbf P_d\), \(e=n-d\), and
\(\A_K^1=\ker |\cdot|_{\A_K}\).  The maps \(\iota(z)\) and
\(n(X)\) are defined in \eqref{eq:appA-d001-v33} and
\eqref{eq:appA-d010-v33}; the measures
\(d^\times z^{(1)},dn_{\A},ds_{m,\A},dg_{\A}\) are fixed above;
$\Kc_0$ and $H_{d,0,\A}$ are defined above; and $a_d(t)$
is the one-parameter subgroup from \eqref{ad}.  Recall from
\eqref{eq:s2-d001-v33} the quantities $\kappa,r_1,r_2,s_\infty,\Delta_K$;
the constants $\mathcal V_{n,d},\mathcal V^{(2)}_{n,d},\mathcal Z_{K,n,d}$
are those appearing in \eqref{a}.

\begin{proposition}
\label{prop:standard-parabolic-measure}
Define
\begin{equation}\label{eq:appA-d013-v33}
 \Phi_P:\A_K^1\times\operatorname{Mat}_{d\times e}(\A_K)
 \times\SL_d(\A_K)\times\SL_e(\A_K)
 \longrightarrow\mathbf P(\A_K)^1
\end{equation}
by
\begin{equation}\label{eq:appA-d014-v33}
 \Phi_P(z,X,s_d,s_e)
 =\iota(z)n(X)
  \begin{pmatrix}s_d&0\\0&s_e\end{pmatrix}.
\end{equation}
Then:
\begin{enumerate}[label=\textup{(\roman*)}]
\item \(\Phi_P\) is a homeomorphism.
\item Let $d\sigma_0$ be the $\Kc_0$-invariant probability measure on
$\Kc_0/H_{d,0,\A}$.  There is a unique Haar measure \(dp_0^1\) on
\(\mathbf P(\A_K)^1\) such that, for every
$f\in C_c(\mathbf G(\A_K))$,
\begin{equation}\label{eq:standard-global-Iwasawa-v25}
 \int_{\mathbf G(\A_K)}f(g)\,dg_{\A}
 =\int_{\Kc_0/H_{d,0,\A}}\int_\R\int_{\mathbf P(\A_K)^1}
 f(ka_d(t)u)e^{\kappa nt}\,dp_0^1(u)\,dt\,d\sigma_0(kH_{d,0,\A}),
\end{equation}
Moreover,
\begin{equation}
 dp_0^1
 =\Delta_K^{-(de+1)}
 \binom nd^{s_\infty}
 \mathcal V_{n,d}^{r_1}
 \bigl(\mathcal V^{(2)}_{n,d}\bigr)^{r_2}
 \mathcal Z_{K,n,d}^{-1}
 (\Phi_P)_*\left(
 d^\times z^{(1)}\otimes dn_{\A}
 \otimes ds_{d,\A}\otimes ds_{e,\A}
 \right).
 \label{eq:prop23-4-7-v4}
\end{equation}
\end{enumerate}
\end{proposition}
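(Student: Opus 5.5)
Part (i) is algebraic, and part (ii) is a local-to-global computation of the constant in the adelic Iwasawa decomposition.

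\emph{Part (i).} I would first show that every \(p\in\mathbf P(\A_K)^1\) factors uniquely. Write \(p=\bigl(\begin{smallmatrix}A&B\\0&D\end{smallmatrix}\bigr)\) and set \(z=\det A\); then \(z\in\A_K^1\) and \(\det D=z^{-1}\). Put \(s_d=r_d(z)^{-1}A\) and \(s_e=r_e(z^{-1})^{-1}D\); both lie in \(\SL\). Finally recover \(X\) from \(B\) by solving \(r_d(z)\,X\,s_e\,r_e(z^{-1})^{\ast}=B\), that is, by moving \(n(X)\) past the block-diagonal factors. All these formulas are polynomial in the entries and in \(z^{\pm1}\). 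So \(\Phi_P\) and its inverse are continuous placewise. They respect the restricted-product conditions, since \(z_v\in\cO_v^\times\) and integral blocks give integral \(X\) at almost all \(v\). This gives (i).

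\emph{Uniqueness in (ii).} Existence and uniqueness of \(dp_0^1\) follow exactly as in \eqref{eq:parabolic-integration-v41}, now applied to the adelic group. We have \(\mathbf G(\A_K)=\Kc_0\,a_d(\R)\,\mathbf P(\A_K)^1\); this uses the Iwasawa decomposition at every place and the fact that \(|\vartheta|_{\A_K}\) is surjected onto by \(a_d(t)\). The modular character of \(\mathbf P(\A_K)\) is \(|\vartheta|_{\A_K}^{n}\), because conjugation by \(a_d(t)\) scales \(\mathbf N\) by \(e^{\kappa nt}\). Together with unimodularity of \(\mathbf P(\A_K)^1\), this gives a unique quotient measure. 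The content of (ii) is the constant.

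\emph{The constant.} I would compute it place by place with gauge forms. The first step is to write \(\omega_n\) restricted along \(k\,a\,n(X)\,\mathrm{diag}(s_d,s_e)\) as a wedge product: the form on \(\mathbf G/\mathbf P\), times \(d\log\det A\), times \(dX\), times \(\omega_d\wedge\omega_e\). The Chevalley-basis normalization produces a factor from the torus direction. The element \(\mathrm{diag}(I_d/d,-I_e/e)\), rescaled so that its \(\vartheta\)-derivative is one, has a determinant relative to \(H_d\) and the \(H_i\)'s that yields a rational constant. At each place one then has
\[
dg_v=c_v\,d\sigma_{v}\,|\vartheta|_v^{n}\,d^\times z_v\,dn_v\,ds_{d,v}\,ds_{e,v}.
\]
Here \(c_v\) is computed as the ratio of \(\vol(\mathsf K_{n,v})\) to \(\vol(\mathsf K_{n,v}\cap\mathbf P)\) in gauge measures, together with the orbit map \(\mathsf K_{n,v}/(\mathsf K_{n,v}\cap \mathbf P)\to\Gr_d\).

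At finite places, the standard point counts over the residue field give \(\vol(\SL_m(\cO_v))=\prod_{j=2}^m(1-q_v^{-j})\). The ratio of Grassmannian counts then yields the Euler factors that assemble into \(\mathcal Z_{K,n,d}^{-1}\). At real and complex places, the known volumes of \(\operatorname{SO}(m)\) and \(\operatorname{SU}(m)\) with respect to the Chevalley gauge form are products of sphere volumes. Their ratio gives \(\mathcal V_{n,d}\) or \(\mathcal V^{(2)}_{n,d}\), and the torus Jacobian gives \(\binom nd\) per archimedean place.

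The global factor \(\Delta_K^{-(de+1)}\) comes from converting products of local additive measures to Tamagawa measures. The \(de\) coordinates of \(\mathbf N\) each carry \(\Delta_K^{-1}\). One more \(\Delta_K^{-1}\) arises because \(dg_{\A}\), \(ds_{d,\A}\) and \(ds_{e,\A}\) carry the factors \(\Delta_K^{-\dim}\), while \(\dim\SL_n-\dim\SL_d-\dim\SL_e-de=1\) is the torus dimension. A further \(\kappa\) from the splitting \(t\mapsto(e^t)_{v\mid\infty}\) is absorbed by the definition of \(d^\times z^{(1)}\).

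\emph{Main obstacle.} I expect the hardest step to be the exact archimedean bookkeeping. This means the gauge volumes of \(\operatorname{SO}(m)\) and \(\operatorname{SU}(m)\) relative to \(\omega_m\) (including the complex factor-of-2 conventions in \(d\log r^2\)), and the torus Jacobian that produces exactly \(\binom nd\). I would handle these first by checking the case \(n=2,d=1\), \(K=\Q\) against \(\vol(\SL_2(\R)/\SL_2(\Z))=\zeta(2)\) from Tamagawa number one, and only then do the general induction on \(m\) via fibrations \(\operatorname{SO}(m)/\operatorname{SO}(m-1)=S^{m-1}\).
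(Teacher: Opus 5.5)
Your overall route is the paper's. It computes local Iwasawa constants $C_v$. At finite places these are obtained by testing against $\mathbf 1_{\SL_n(\cO_v)}$, which gives $\prod_{j=e+1}^n(1-q_v^{-j})/\prod_{j=2}^d(1-q_v^{-j})$ and hence $\mathcal Z_{K,n,d}^{-1}$. At archimedean places they are Grassmannian volumes. Finally one converts to Tamagawa measures, with $\kappa$ absorbed into $d^\times z^{(1)}$.

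You differ only in the archimedean sub-computation. You propose Chevalley volumes of $\operatorname{SO}(m)$ and $\operatorname{SU}(m)$ via sphere fibrations, followed by a ratio. The paper instead shows that the big-cell multiplication map has Jacobian $1$ in Chevalley coordinates. It then reads off the Grassmannian volume from the rank-one densities on $\Pj^{m-1}$ and an incidence recursion. Your sub-route is viable. But the constant is the whole content of (ii), and two of your attributions are false as stated.

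\emph{The torus direction does not produce $\binom nd$.}
\begin{itemize}
\item Modulo $\operatorname{span}\{H_i:i\ne d\}$, the $H_d$-coefficient of a diagonal element is the fundamental weight $\varpi_d$. This is the trace of the upper-left $d\times d$ block, i.e.\ the derivative of $\vartheta$.
\item So any generator normalized to have $\vartheta$-derivative $1$ has Chevalley determinant $1$. This covers your $\operatorname{diag}(I_d/d,-I_e/e)$ and the paper's $E_{11}-E_{d+1,d+1}=H_1+\cdots+H_d$.
\item The only torus-related scalars come from the normalization of $d^\times z_v$. At a real place $d^\times a=\tfrac12\,da$ at $a=1$, and this factor $2$ is cancelled by the two components of $\mathsf K_{n,v}\cap\mathbf P(K_v)=S(O(d)\times O(e))$. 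At a complex place $d^2a=\pi\,d^\times a$, and the circle in $S(U(d)\times U(e))$ is the phase of $z$, already carried by $d^\times z_v$.
\item The binomial comes from the compact quotient, exactly as in the paper's rank-one scalars $mV_m$ and $mV_{2m}$. Since $\vol(S^{m-1})=mV_m$, one has $\prod_{m=e+1}^n mV_m/\prod_{m=2}^d mV_m=\binom nd\mathcal V_{n,d}$.
\item So your sphere-volume ratio already contains $\binom nd$. An additional torus factor $\binom nd$ would give the wrong constant.
\end{itemize}
Your proposed test $n=2$, $d=1$ cannot detect this, because there $\binom21=2$ coincides with the real-place factor $2$. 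Try $n=3$, $d=1$, with $E_{ij}-E_{ji}$ orthonormal. The ratio is $\vol(S^1)\vol(S^2)/(2\vol(S^1))=2\pi\ne V_3$, and the torus factor is $2\ne3$. Only the product $4\pi=3V_3$ is correct.

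\emph{The $\Delta_K$ count.} In fact $\dim\SL_n-\dim\SL_d-\dim\SL_e-de=de+1$, not $1$. The $\mathbf N$-, $\SL_d$- and $\SL_e$-parts of $\Delta_K^{-(n^2-1)}$ cancel against $dn_{\A}$, $ds_{d,\A}$ and $ds_{e,\A}$. The surviving $\Delta_K^{-(de+1)}$ belongs to the $de$ Grassmannian directions, measured by the probability $\sigma_0$, and to the one torus direction, measured by $d^\times z^{(1)}$; neither measure is Tamagawa-normalized. Your exponent comes out right only because $\dim\mathbf N=\dim\mathbf G/\mathbf P$.

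\emph{A minor slip in (i).} The upper-right block of $\Phi_P(z,X,s_d,s_e)$ is $r_d(z)Xs_e$. So $X=r_d(z)^{-1}Bs_e^{-1}$, with no $r_e(z^{-1})$ factor.
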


\begin{proof}
If
\(p=\left(\begin{smallmatrix}A&B\\0&D\end{smallmatrix}\right)
\in\mathbf P(\A_K)^1\), then
\begin{equation}\label{eq:appA-d015-v33}
 z=\det A,\qquad
 s_d=r_d(z)^{-1}A,\qquad
 s_e=r_e(z^{-1})^{-1}D,\qquad
 X=r_d(z)^{-1}Bs_e^{-1},
\end{equation}
which gives the inverse of \(\Phi_P\).

For a place \(v\), let \(dk_{n,v}\) be Haar probability on
\(\mathsf K_{n,v}\).  The coordinates
\begin{equation}\label{eq:appA-d016-v33}
 (z,X,s_d,s_e)\longmapsto
 \iota(z)n(X)\begin{pmatrix}s_d&0\\0&s_e\end{pmatrix}
\end{equation}
identify
\(K_v^\times\times\operatorname{Mat}_{d\times e}(K_v)
\times\SL_d(K_v)\times\SL_e(K_v)\) with \(\mathbf P(K_v)\).
Conjugation by \(\iota(z)\) on the unipotent block has determinant
\(z^n\).  Hence the local Iwasawa formula gives a constant \(C_v>0\) such that, for
every $f\in C_c(\SL_n(K_v))$,
\begin{equation}\label{eq:appA-d017-v33}\begin{aligned}
 \int_{\SL_n(K_v)}f(g)\,dg_v
 &=C_v\int_{\mathsf K_{n,v}}\int_{K_v^\times}
 \int_{\operatorname{Mat}_{d\times e}(K_v)}
 \int_{\SL_d(K_v)}\int_{\SL_e(K_v)}
 f\!\left(k\iota(z)n(X)
 \begin{pmatrix}s_d&0\\0&s_e\end{pmatrix}\right)\\
 &\hspace{22mm}\times |z|_v^{nn_v}
 \,ds_{e,v}\,ds_{d,v}\,dn_v\,d^\times z_v\,dk_{n,v},
\end{aligned}\end{equation}
cf.\ \cite{NakamuraWatanabe}*{\S1.2}.

If \(v\nmid\infty\), let \(\mathfrak p_v\) be the maximal ideal of
\(\cO_v\) and put \(q_v=|\cO_v/\mathfrak p_v|\).  Then
\begin{equation}\label{eq:appA-d018-v33}
 \vol(\mathsf K_{m,v},ds_{m,v})
 =\prod_{j=2}^m(1-q_v^{-j}).
\end{equation}
Applying the local formula to \(\mathbf1_{\mathsf K_{n,v}}\) gives
\begin{equation}\label{eq:appA-d019-v33}
 C_v=
 \frac{\prod_{j=2}^n(1-q_v^{-j})}
 {\bigl(\prod_{j=2}^d(1-q_v^{-j})\bigr)
  \bigl(\prod_{j=2}^e(1-q_v^{-j})\bigr)}.
\end{equation}
At an archimedean place, consider the multiplication map on the open cell
formed from the opposite unipotent, the unipotent radical, the two
special-linear blocks, and the determinant section \(\iota(z)\).  We compute
its scalar relative to the invariant probability measure on the compact
Grassmannian.  In the ordered tangent coordinates
\begin{equation}\label{eq:appA-d020-v33}
 \operatorname{Mat}_{e\times d}\oplus
 \operatorname{Mat}_{d\times e}\oplus
 \mathfrak{sl}_d\oplus\mathfrak{sl}_e\oplus K_v,
\end{equation}
its differential at the identity is block triangular with identity diagonal
blocks.  On the diagonal Lie algebra the extra generator is
\begin{equation}\label{eq:appA-d021-v33}
 E_{11}-E_{d+1,d+1}=H_1+\cdots+H_d;
\end{equation}
together with the simple coroot bases of \(\mathfrak{sl}_d\) and
\(\mathfrak{sl}_e\), this is an integral basis of determinant \(\pm1\).
Thus the differential has Jacobian \(1\) in these coordinates, and the
remaining scalar is determined by the compact quotient.

For the rank-one real quotient, the affine chart of
\(\mathbf P^{m-1}(\R)\) has invariant probability density
\begin{equation}\label{eq:appA-d022-v33}
 \frac{\Gamma(m/2)}{\pi^{m/2}}
 (1+\|x\|^2)^{-m/2}\,dx.
\end{equation}
At the identity of the positive component of \(\R^\times\), the determinant
normalization above gives \(d^\times a=\tfrac12\,da\).  Hence the rank-one
scalar is
\begin{equation}\label{eq:appA-d023-v33}
 2\frac{\pi^{m/2}}{\Gamma(m/2)}=mV_m.
\end{equation}
Similarly, on \(\mathbf P^{m-1}(\C)\) the invariant probability density is
\begin{equation}\label{eq:appA-d024-v33}
 \frac{(m-1)!}{\pi^{m-1}}(1+\|x\|^2)^{-m}\,d^{2m-2}x,
\end{equation}
and at \(a=1\in\C^\times\) our normalization gives
\(d^2a=\pi\,d^\times a\).  The rank-one scalar is therefore
\(mV_{2m}\).

To pass from the rank-one quotient to the Grassmannian, use the incidence
space of pairs \((L,W)\) with \(L\) a line contained in a
\(d\)-plane \(W\).  Integrating first over \(W\) and then over the lines
in \(W\), or first over \(L\) and then over the \(d-1\)-planes in the
quotient, gives the recursive quotient formula for the invariant measures.
Iterating it shows that the ambient rank-one factors are those with
\(m=n-d+1,\ldots,n\), while the internal factors are those with
\(m=2,\ldots,d\).  The determinant coordinate already carries the missing
one-dimensional orthogonal or unitary factor: at a real place its sign is
part of \(d^\times z_v\), and at a complex place its phase is part of
\(d^\times z_v\).  Thus no additional \(m=1\) factor occurs in the
denominator.  Multiplying the rank-one factors gives
\begin{equation}\label{eq:appA-d025-v33}
 \frac{\displaystyle\prod_{m=n-d+1}^{n}mV_m}
      {\displaystyle\prod_{m=2}^{d}mV_m}
 =\binom nd\mathcal V_{n,d},
 \qquad
 \frac{\displaystyle\prod_{m=n-d+1}^{n}mV_{2m}}
      {\displaystyle\prod_{m=2}^{d}mV_{2m}}
 =\binom nd\mathcal V^{(2)}_{n,d}.
\end{equation}
Hence
\begin{equation}\label{eq:appA-d026-v33}
 C_v=
 \begin{cases}
  \displaystyle\binom nd\mathcal V_{n,d},&K_v=\R,\\[2mm]
  \displaystyle\binom nd\mathcal V^{(2)}_{n,d},&K_v=\C.
 \end{cases}
\end{equation}

Consequently,
\begin{equation}\label{eq:appA-d027-v33}
 \prod_{v\nmid\infty}C_v=\mathcal Z_{K,n,d}^{-1},
 \qquad
 \prod_{v\mid\infty}C_v
 =\binom nd^{s_\infty}\mathcal V_{n,d}^{r_1}
 \bigl(\mathcal V^{(2)}_{n,d}\bigr)^{r_2},
\end{equation}
and the finite product converges absolutely.

The preceding Jacobian calculation shows that the pushforward
\begin{equation}\label{eq:appA-d028-v33}
 (\Phi_P)_*\left(
 d^\times z^{(1)}\otimes dn_{\A}
 \otimes ds_{d,\A}\otimes ds_{e,\A}\right)
\end{equation}
is a right Haar measure on \(\mathbf P(\A_K)^1\); the factor
\(|z|_{\A_K}^{n}\) is one on \(\A_K^1\).  Since the modular character of
\(\mathbf P(\A_K)\) is
\(u\mapsto|\vartheta(u)|_{\A_K}^{n}\), the norm-one parabolic is
unimodular.

Write
\begin{equation}\label{eq:appA-d029-v33}
 \epsilon_\infty(t)_v=\begin{cases}e^t,&v\mid\infty,\\1,&v\nmid\infty.
 \end{cases}
\end{equation}
Every \(z\in\A_K^\times\) is uniquely the product of \(\epsilon_\infty(t)\) and an
element of \(\A_K^1\), where
\(t=\kappa^{-1}\log|z|_{\A_K}\), and
\(d^\times z=d^\times z^{(1)}\,dt\).  Moreover
\(\iota(\epsilon_\infty(t))a_d(-t)\in
\SL_d(\A_K)\times\SL_e(\A_K)\); absorbing this element into the Levi
factor and changing variables in the unipotent factor preserve the Haar measures, while
\(|\epsilon_\infty(t)|_{\A_K}^{n}=e^{\kappa nt}\).  Taking restricted products of the local formulas gives
\eqref{eq:standard-global-Iwasawa-v25} with the product of the local
gauge-form measures.  With our normalization, Nakamura--Watanabe
\cite{NakamuraWatanabe}*{\S1.1} define the Tamagawa measure attached to a
gauge form as the product of its local gauge-form measures multiplied by the
additive covolume raised to minus the dimension of the group.  Their complex additive measure gives the unit disc mass \(2\pi\),
whereas our Lebesgue measure gives mass \(\pi\).  Thus, at each complex place,
replacing their additive measure by ours multiplies every \(r\)-dimensional
local gauge-form measure by \(2^{-r}\) and simultaneously changes the additive
covolume from \(|D_K|^{1/2}\) to \(2^{-r_2}|D_K|^{1/2}=\Delta_K\).  The two
changes give the same global normalization.  The groups
\(\mathbf G,\mathbf N,\SL_d,\SL_e\) have no nontrivial rational characters,
so no Artin \(L\)-factor occurs.  Consequently, for each of these groups, the
Tamagawa measure is the product of the local gauge-form measures multiplied
by $\Delta_K$ raised to minus the dimension of that group.  The resulting scalar is
\begin{equation}\label{eq:appA-d031-v33}
 \Delta_K^{de+(d^2-1)+(e^2-1)-(n^2-1)}\prod_vC_v
 =\Delta_K^{-(de+1)}
 \binom nd^{s_\infty}\mathcal V_{n,d}^{r_1}
 \bigl(\mathcal V^{(2)}_{n,d}\bigr)^{r_2}
 \mathcal Z_{K,n,d}^{-1}.
\end{equation}
This proves \eqref{eq:standard-global-Iwasawa-v25} and
\eqref{eq:prop23-4-7-v4}.
\end{proof}

Let \(d\bar p_0\) be the quotient of \(dp_0^1\) by \(\mathbf P(K)\).
Applying Weil's integration formula \cite{Weil}*{Chapter~II} to
\begin{equation}\label{eq:parabolic-Levi-exact-sequences-v33}
 1\to\mathbf N(\A_K)\to\mathbf P(\A_K)^1\to\mathbf M(\A_K)^1\to1,
 \qquad
 1\to\SL_d(\A_K)\times\SL_e(\A_K)
 \to\mathbf M(\A_K)^1\xrightarrow{\vartheta}\A_K^1\to1,
\end{equation}
and using Tamagawa number one for \(\mathbf G_a^{de}\)
\cite{Weil}*{Chapter~II} and for the special-linear factors
\cite{Langlands}, together with
\eqref{eq:prop23-4-1-v4}, gives
\begin{equation}
 \vol\bigl(\mathbf P(\A_K)^1/\mathbf P(K),d\bar p_0\bigr)
 =\frac{\kappa h_KR_K}{w_K}
 \Delta_K^{-(de+1)}
 \binom nd^{s_\infty}\mathcal V_{n,d}^{r_1}
 \bigl(\mathcal V^{(2)}_{n,d}\bigr)^{r_2}
 \mathcal Z_{K,n,d}^{-1}
 =\kappa n\,\mathfrak a_{K,n,d}.
 \label{eq:prop23-4-9-v4}
\end{equation}

\medskip\noindent\emph{Adelic quotient used in the mass comparison.}
Put
\begin{equation}\label{eq:Xd-v18}
 X_d=\mathbf G(\A_K)/\mathbf P(K).
\end{equation}
For $x=g\mathbf P(K)\in X_d$, equip $K^n$ with the local norms
$y\mapsto\|g_vy\|_{E,v}$.  Let $W_g$ denote $W_0$ with the induced adelic structure, and endow
$K^n/W_0$ with the quotient norms.  Define
\begin{equation}\label{eq:Xd-invariants-v18}
 \begin{aligned}
 t_E(x)&=\log H(W_g),\\
 \pi_{E,\infty}(x)&=(g_vW_{0,v})_{v\mid\infty},\\
 \pi_{E,\mathcal Y}(x)&=([\widehat W_g],[\widehat{K^n/W_0}]).
 \end{aligned}
\end{equation}
The determinant relation shows that $\pi_{E,\mathcal Y}$ takes values in
$\cY_{E;d,e}$.  These maps are well defined because right multiplication by
$\mathbf P(K)$ changes the induced structures only by $K$-linear changes
of basis.  We use the same notation for their pullbacks along the quotient
map $\mathbf G(\A_K)\to X_d$.

For fixed $j$, the pointwise calculations proving
\eqref{eq:sector-invariants-v14} give
\begin{equation}\label{eq:global-sector-compatibility-v4}
 \begin{aligned}
 t_E\!\left(
  (\gamma_{j,\infty}\widetilde\Phi_j(z,t),\gamma_{j,f})\mathbf P(K)
 \right)&=t,\\
 \pi_{E,\infty}\!\left(
  (\gamma_{j,\infty}\widetilde\Phi_j(z,t),\gamma_{j,f})\mathbf P(K)
 \right)&=\pi_{j,\infty}(z),\\
 \pi_{E,\mathcal Y}\!\left(
  (\gamma_{j,\infty}\widetilde\Phi_j(z,t),\gamma_{j,f})\mathbf P(K)
 \right)&=\pi_{j,\mathcal Y}(z).
 \end{aligned}
\end{equation}

Let $d\bar g$ be the Weil quotient of $dg_{\A}$ by counting measure on
$\mathbf P(K)$.  For every nonnegative Borel function $\Phi$ on $X_d$
which is left $\Kc_{E,f}$-invariant, the finite-adelic double-coset
decomposition gives
\begin{equation}\label{eq:finite-class-unfolding-v4}
 \int_{X_d}\Phi(x)\,d\bar g(x)
 =\vol(\Kc_{E,f})\sum_{j=1}^J
 \int_{G/\Gamma_{P,j}}
 \Phi\bigl((g,\gamma_{j,f})\mathbf P(K)\bigr)\,d\bar g_j(g).
\end{equation}
Indeed, the $j$-th finite-adelic class is the image of
$G\times\Kc_{E,f}$ under
\begin{equation}\label{eq:s2-d089-v33}
 (g,k_f)\longmapsto (g,k_f\gamma_{j,f})\mathbf P(K),
\end{equation}
and two pairs have the same image precisely when they differ by the right
action
\begin{equation}\label{eq:s2-d090-v33}
 (g,k_f)\cdot p
 =\bigl(gp_\infty,
        k_f\gamma_{j,f}p_f\gamma_{j,f}^{-1}\bigr)
 \qquad(p\in\Gamma_{P,j}).
\end{equation}
Weil's quotient formula then gives \eqref{eq:finite-class-unfolding-v4};
the fiber over $G/\Gamma_{P,j}$ is $\Kc_{E,f}$ and has Haar volume
$\vol(\Kc_{E,f})$.

\begin{proof}[Proof of Proposition~\ref{prop:parabolic-mass-W-v4}]
Recall $X_d$ from \eqref{eq:Xd-v18}, together with the maps
$t_E$, $\pi_{E,\infty}$, and $\pi_{E,\mathcal Y}$ from
\eqref{eq:Xd-invariants-v18}, and let \(d\bar g\) be the quotient measure
fixed above.  For nonnegative Borel functions $\psi$ on $\R$, $\phi$ on
$\Gr_{d,\infty}(E)$, and $\xi$ on $\cY_{E;d,e}$, set
\begin{equation}\label{eq:appA-d034-v33}
 \mathcal I(\psi,\phi,\xi)
 =\int_{X_d}\psi(t_E(x))\phi(\pi_{E,\infty}(x))\xi(\pi_{E,\mathcal Y}(x))\,d\bar g(x).
\end{equation}
The integrand defining $\mathcal I$ is left $\Kc_{E,f}$-invariant.
Applying \eqref{eq:finite-class-unfolding-v4},
\eqref{eq:full-sector-quotient-v4}, and
\eqref{eq:global-sector-compatibility-v4}, and then using the product
decomposition \eqref{eq:full-sector-product-v4} together with
\eqref{eq:eta-parabolic-v4}, gives
\begin{equation}\label{eq:mass-sector-factor-v25}
 \mathcal I(\psi,\phi,\xi)
 =\left(\int_\R\psi(t)e^{\kappa nt}\,dt\right)
  \left(\int_{\Gr_{d,\infty}(E)}\phi\,d\sigma_{E,d}\right)
  \int\xi\,d\eta_E.
\end{equation}

Let \(\Kc_E=\prod_v\Kc_{E,v}\) and
\(H_{d,E,\A}=\Kc_E\cap\mathbf P(\A_K)\), and let \(d\sigma_E\) be the
\(\Kc_E\)-invariant probability measure on \(\Kc_E/H_{d,E,\A}\).  The
generalized Iwasawa quotient formula \cite{NakamuraWatanabe}*{\S1.2} gives a
unique Haar measure \(dp_E^1\) on \(\mathbf P(\A_K)^1\) such that, for every
$f\in C_c(\mathbf G(\A_K))$,
\begin{equation}\label{eq:global-Iwasawa-E-v25}
 \int_{\mathbf G(\A_K)}f(g)\,dg_{\A}
 =\int_{\Kc_E/H_{d,E,\A}}\int_\R\int_{\mathbf P(\A_K)^1}
 f(ka_d(s)u)e^{\kappa ns}\,dp_E^1(u)\,ds\,d\sigma_E(kH_{d,E,\A}).
\end{equation}
Let \(d\bar p_E\) be
the quotient of \(dp_E^1\) by \(\mathbf P(K)\).  Its finiteness follows
from the two exact sequences in \eqref{eq:parabolic-Levi-exact-sequences-v33}, compactness of
\(\A_K^1/K^\times\), and the finite-volume Tamagawa quotients of the
unipotent and special-linear factors.

Put \(\tau_0=\log H_E(W_0)\).  If the upper-left block of
\(u\in\mathbf P(\A_K)^1\) is \(A\), then
\begin{equation}\label{eq:appA-d035-v33}
 H(W_u)=H_E(W_0)|\det A|_{\A_K}^{1/\kappa}=e^{\tau_0}.
\end{equation}
The factor \(a_d(s)\) multiplies this height by \(e^s\), while
\(k\in\Kc_E\) is an isometry.  Thus
\(\log H(W_{ka_d(s)u})=s+\tau_0\), while
\(\pi_{E,\mathcal Y}(ka_d(s)u)=\pi_{E,\mathcal Y}(u)\).  Applying Weil's formula to
\eqref{eq:global-Iwasawa-E-v25} and changing variables
\(t=s+\tau_0\) gives
\begin{equation}\label{eq:mass-global-factor-v25}
 \mathcal I(\psi,\phi,\xi)
 =e^{-\kappa n\tau_0}
  \left(\int_\R\psi(t)e^{\kappa nt}\,dt\right)
  \left(\int_{\Gr_{d,\infty}(E)}\phi\,d\sigma_{E,d}\right)
 \int_{\mathbf P(\A_K)^1/\mathbf P(K)}
 \xi(\pi_{E,\mathcal Y}(u))\,d\bar p_E(u).
\end{equation}
Since \(H(E)=1\), compare
\eqref{eq:mass-sector-factor-v25} and \eqref{eq:mass-global-factor-v25}
with \(\phi\equiv1\) and a nonnegative \(\psi\) satisfying
\(\int_\R\psi(t)e^{\kappa nt}\,dt=1\).  This yields
\begin{equation}\label{eq:mass-bridge-v25}
 \eta_E=e^{-\kappa n\tau_0}(\pi_{E,\mathcal Y})_*d\bar p_E.
\end{equation}

For \(E_0=K^n\), one has \(\tau_0=0\) and \(d\bar p_{E_0}=d\bar p_0\).
For
\begin{equation}\label{eq:appA-d036-v33}
 \mathbf P(\A_K)^1/\mathbf P(K)
 \longrightarrow\mathbf M(\A_K)^1/\mathbf M(K),
\end{equation}
the fiber over \(m\mathbf M(K)\) is
\(\mathbf N(\A_K)/(m\mathbf N(K)m^{-1})\).  Since conjugation by
\(m\in\mathbf M(\A_K)^1\) has modulus
\(|\vartheta(m)|_{\A_K}^{n}=1\), Tamagawa number one for
\(\mathbf G_a^{de}\) gives volume one to every fiber.  By
\eqref{eq:prop23-4-9-v4}, the probability measure
\((\kappa n\,\mathfrak a_{K,n,d})^{-1}d\bar p_0\) pushes forward to
normalized Haar probability on the Levi quotient.

Disintegrate Haar probability on \(\A_K^1/K^\times\) over
\(\overline C_K^1\).  Weil's formula for
\begin{equation}\label{eq:appA-d037-v33}
 1\to\SL_d(\A_K)\times\SL_e(\A_K)
 \to\mathbf M(\A_K)^1\xrightarrow{\vartheta}\A_K^1\to1
\end{equation}
and Tamagawa number one for the special-linear factors give, after choosing
$z\in\A_K^1$ whose idele class maps to
\(c\in\overline C_K^1\), the product of normalized Haar probabilities on
\(\SL_d(\A_K)/\SL_d(K)\) and \(\SL_e(\A_K)/\SL_e(K)\).  After quotienting on the left by the compact groups, consider the
pushforwards under
\begin{equation}\label{eq:appA-d038-v33}
 s_d\SL_d(K)\longmapsto
 \Kc_d r_d(z)s_d\GL_d(K),
 \qquad
 s_e\SL_e(K)\longmapsto
 \Kc_e r_e(z^{-1})s_e\GL_e(K).
\end{equation}
By almost-everywhere uniqueness of disintegration, these are
\(\mu_{K,d,c}\) and \(\mu_{K,e,c^{-1}}\), respectively.  The two pushforwards
do not depend on the choice of $z$: replacing $z$ by
\(azu\), with \(a\in K^\times\) and \(u\in\mathcal U_K\), changes the two
maps only by left multiplication by \(r_d(u)\in\Kc_d\) or
\(r_e(u^{-1})\in\Kc_e\), and by conjugation with \(r_d(a)\) or
\(r_e(a^{-1})\); these conjugations preserve normalized Haar on the
special-linear quotients.  Thus the conditional measure over \(c\) is
\begin{equation}\label{eq:appA-d039-v33}
 \mu_{K,d,c}\otimes\mu_{K,e,c^{-1}}.
\end{equation}
Integrating over \(c\) gives
\begin{equation}\label{eq:appA-d040-v33}
 (\pi_{E_0,\mathcal Y})_*\left(
 \frac{d\bar p_0}{\kappa n\,\mathfrak a_{K,n,d}}
 \right)
 =\nu_{E_0,d}.
\end{equation}
Together with \eqref{eq:mass-bridge-v25}, this proves
\(\eta_{E_0}=\kappa n\,\mathfrak a_{K,n,d}\nu_{E_0,d}\).

For general \(E\), choose \(g_E\in\GL_n(\A_K)\) such that
\(\|x\|_{E,v}=\|g_{E,v}x\|_{n,v}\), and put
\begin{equation}\label{eq:appA-d041-v33}
 z=\det g_E,\qquad b=r_n(z).
\end{equation}
Since \(H(E)=1\), \(|z|_{\A_K}=1\), and
$b^{-1}g_E\in\mathbf G(\A_K)$.  Let \(E_b\) be the rigid adelic space
represented by \(b\).  Left translation by \(b^{-1}g_E\) preserves \(d\bar g\)
and intertwines $t_E$, $\pi_{E,\infty}$, and $\pi_{E,\mathcal Y}$ with the
invariants for $E_b$.  Applying \eqref{eq:mass-sector-factor-v25} to an arbitrary nonnegative
\(\xi\) gives
\begin{equation}\label{eq:appA-d042-v33}
 \eta_E=\eta_{E_b},
\end{equation}
and \(c_E=c_{E_b}\), so \(\nu_{E,d}=\nu_{E_b,d}\).

For $E_b$, the value of $\tau_0$ is
$\kappa^{-1}\log|z|_{\A_K}=0$.  Conjugation by \(b\)
preserves the Tamagawa measure on \(\mathbf G(\A_K)\), sends the compact
factor for \(E_b\) to the standard one, fixes \(a_d(t)\), and preserves
\(\mathbf P(\A_K)^1\).  Comparing \eqref{eq:global-Iwasawa-E-v25} with
\eqref{eq:standard-global-Iwasawa-v25} gives
\begin{equation}\label{eq:appA-d043-v33}
 (\operatorname{Ad}b)_*dp_{E_b}^1=dp_0^1.
\end{equation}
With \(b_d=r_d(z)\), for every $z'\in\A_K^1$,
Proposition~\ref{prop:standard-parabolic-measure} gives
\begin{equation}\label{eq:appA-d044-v33}
 b\,\Phi_P(z',X,s_d,s_e)\,b^{-1}
 =\Phi_P(z',b_dX,b_ds_db_d^{-1},s_e).
\end{equation}
The change \(X\mapsto b_dX\) has adelic modulus
\(|z|_{\A_K}^{e}=1\), and conjugation by \(b_d\) preserves
\(ds_{d,\A}\).  By \eqref{eq:prop23-4-7-v4},
\begin{equation}\label{eq:appA-d045-v33}
 (\operatorname{Ad}b)_*dp_0^1=dp_0^1.
\end{equation}
Hence \(dp_{E_b}^1=dp_0^1\), and likewise for the quotient measures.

Here \(b_d\in\GL_d(\A_K)^1\) and \([\det b_d]=c_E\).  Let
\(\widetilde\mu_{d,c}\) be the conditional measures obtained by disintegrating
normalized Haar probability on \(\GL_d(\A_K)^1/\GL_d(K)\) through
\begin{equation}\label{eq:appA-d046-v33}
 g\GL_d(K)\longmapsto[\det g]\in\overline C_K^1.
\end{equation}
The quotient map
\begin{equation}\label{eq:appA-d047-v33}
 g\GL_d(K)\longmapsto\Kc_dg\GL_d(K)
\end{equation}
commutes with the determinant maps, so uniqueness of disintegration gives
\begin{equation}\label{eq:appA-d048-v33}
 (g\GL_d(K)\mapsto\Kc_dg\GL_d(K))_*\widetilde\mu_{d,c}
 =\mu_{K,d,c}
\end{equation}
for \(m_{\overline C}\)-almost every \(c\).  Left translation by \(b_d\)
preserves Haar probability and sends determinant class \(c\) to \(c_Ec\).
Since \(m_{\overline C}\) is translation invariant, uniqueness of
disintegration gives
\begin{equation}\label{eq:appA-d049-v33}
 (b_d\cdot)_*\widetilde\mu_{d,c}
 =\widetilde\mu_{d,c_Ec}
 \quad\text{for }m_{\overline C}\text{-a.e. }c.
\end{equation}
For \(m_{\overline C}\)-almost every \(c\), choose
$z_c\in\A_K^1$ whose idele class lies over \(c\).  The pushforward of normalized
Haar probability on
\(\SL_d(\A_K)/\SL_d(K)\) under
\begin{equation}\label{eq:appA-d050-v33}
 s_d\SL_d(K)\longmapsto
 \Kc_db_dr_d(z_c)s_d\GL_d(K)
\end{equation}
is independent of the choice of $z_c$: replacing $z_c$ by \(az_cu\), with
\(a\in K^\times\), \(u\in\mathcal U_K\), changes the map only by left
\(\Kc_d\)-multiplication and conjugation by \(r_d(a)\).
Averaging over the fiber gives \(\mu_{K,d,c_Ec}\), while the second factor
remains \(\mu_{K,e,c^{-1}}\).  Hence
\begin{equation}\label{eq:appA-d051-v33}
 (\pi_{E_b,\mathcal Y})_*\left(
 \frac{d\bar p_0}{\kappa n\,\mathfrak a_{K,n,d}}
 \right)
 =\int_{\overline C_K^1}
 \mu_{K,d,c_Ec}\otimes\mu_{K,e,c^{-1}}\,dm_{\overline C}(c)
 =\nu_{E,d},
\end{equation}
by the change of variable \(c\mapsto c_Ec\).  Since
\eqref{eq:mass-bridge-v25} for \(E_b\) has \(\tau_0=0\),
\begin{equation}\label{eq:appA-d052-v33}
 \eta_{E_b}=\kappa n\,\mathfrak a_{K,n,d}\nu_{E,d}.
\end{equation}
Since \(\eta_E=\eta_{E_b}\), this proves \eqref{eq:parabolic-mass-W-v4}.
\end{proof}

\Addresses


\begin{bibdiv}
\begin{biblist}

\bib{BHPT}{article}{
      author={B{\'a}r{\'a}ny, Imre},
      author={Harcos, Gergely},
      author={Pach, J{\'a}nos},
      author={Tardos, G{\'a}bor},
       title={Covering lattice points by subspaces},
        date={2001},
     journal={Period. Math. Hungar.},
      volume={43},
      number={1--2},
       pages={93\ndash 103},
         doi={10.1023/A:1015233631926},
}

\bib{Schanuel1979}{article}{
      author={Schanuel, S.~H.},
       title={Heights in number fields},
        date={1979},
     journal={Bull. Soc. Math. France},
      volume={107},
       pages={433\ndash 449},
         doi={10.24033/bsmf.1905},
}

\bib{Schmidt1968}{article}{
      author={Schmidt, W.~M.},
       title={Asymptotic formulae for point lattices of bounded determinant and
              subspaces of bounded height},
        date={1968},
     journal={Duke Math. J.},
      volume={35},
      number={2},
       pages={327\ndash 339},
         doi={10.1215/S0012-7094-68-03532-1},
}

\bib{Schmidt1998}{article}{
      author={Schmidt, W.~M.},
       title={The distribution of sub-lattices of $\mathbb Z^m$},
        date={1998},
     journal={Monatsh. Math.},
      volume={125},
      number={1},
       pages={37\ndash 81},
         doi={10.1007/BF01489457},
}

\bib{Thunder1992}{article}{
      author={Thunder, J.~L.},
       title={An asymptotic estimate for heights of algebraic subspaces},
        date={1992},
     journal={Trans. Amer. Math. Soc.},
      volume={331},
      number={1},
       pages={395\ndash 424},
         doi={10.2307/2154015},
}

\bib{Thunder1993}{article}{
      author={Thunder, J.~L.},
       title={Asymptotic estimates for rational points of bounded height on flag varieties},
        date={1993},
     journal={Compos. Math.},
      volume={88},
      number={2},
       pages={155\ndash 186},
         doi={10.1007/BF02017536},
}

\bib{HoreshKarasik}{article}{
      author={Horesh, Tal},
      author={Karasik, Yakov},
       title={Equidistribution of primitive lattices in $\mathbb R^n$},
        date={2023},
     journal={Q. J. Math.},
      volume={74},
      number={4},
       pages={1253\ndash 1294},
         doi={10.1093/qmath/haad008},
}

\bib{AkaMussoWieser}{article}{
      author={Aka, Menny},
      author={Musso, Andrea},
      author={Wieser, Andreas},
       title={Equidistribution of rational subspaces and their shapes},
        date={2024},
     journal={Ergodic Theory Dynam. Systems},
      volume={44},
      number={8},
       pages={2009\ndash 2062},
         doi={10.1017/etds.2023.107},
}

\bib{RoyThunder}{article}{
      author={Roy, Damien},
      author={Thunder, Jeffrey~L.},
       title={An absolute {Siegel}'s lemma},
        date={1996},
     journal={J. Reine Angew. Math.},
      volume={476},
       pages={1\ndash 26},
         doi={10.1515/crll.1996.476.1},
        note={Addendum and erratum, J. Reine Angew. Math. 508 (1999), 47--51},
}

\bib{Gaudron}{incollection}{
      author={Gaudron, {\'E}ric},
       title={Chapter {II}: Minima and slopes of rigid adelic spaces},
        book={
          title={Arakelov geometry and {D}iophantine applications},
          editor={Peyre, Emmanuel},
          editor={R{\'e}mond, Ga{\"e}l},
          series={Lecture Notes in Mathematics},
          volume={2276},
          publisher={Springer},
          address={Cham},
          date={2021},
        },
       pages={37\ndash 76},
         doi={10.1007/978-3-030-57559-5\_3},
}

\bib{BombieriGubler}{book}{
      author={Bombieri, Enrico},
      author={Gubler, Walter},
       title={Heights in {D}iophantine geometry},
      series={New Mathematical Monographs},
      volume={4},
   publisher={Cambridge University Press},
     address={Cambridge},
        date={2006},
}

\bib{BorelAdeles}{article}{
      author={Borel, Armand},
       title={Some finiteness properties of adele groups over number fields},
        date={1963},
     journal={Publ. Math. Inst. Hautes {\'E}tudes Sci.},
      volume={16},
       pages={5\ndash 30},
         doi={10.1007/BF02684289},
}

\bib{BorelHC}{article}{
      author={Borel, Armand},
      author={Harish-Chandra},
       title={Arithmetic subgroups of algebraic groups},
        date={1962},
     journal={Ann. of Math. (2)},
      volume={75},
      number={3},
       pages={485\ndash 535},
         doi={10.2307/1970210},
}

\bib{PlatonovRapinchuk}{book}{
      author={Platonov, Vladimir},
      author={Rapinchuk, Andrei},
       title={Algebraic groups and number theory},
      series={Pure and Applied Mathematics},
      volume={139},
   publisher={Academic Press},
     address={Boston},
        date={1994},
}

\bib{Morris}{book}{
      author={Morris, Dave Witte},
       title={Introduction to arithmetic groups},
   publisher={Deductive Press},
        date={2015},
}

\bib{HoweMoore}{article}{
      author={Howe, R.~E.},
      author={Moore, C.~C.},
       title={Asymptotic properties of unitary representations},
        date={1979},
     journal={J. Funct. Anal.},
      volume={32},
      number={1},
       pages={72\ndash 96},
         doi={10.1016/0022-1236(79)90078-8},
}

\bib{Langlands}{incollection}{
      author={Langlands, R.~P.},
       title={The volume of the fundamental domain for some arithmetical
              subgroups of {Chevalley} groups},
        book={
          title={Algebraic groups and discontinuous subgroups},
          editor={Borel, Armand},
          editor={Mostow, George~D.},
          series={Proc. Sympos. Pure Math.},
          volume={9},
          publisher={Amer. Math. Soc.},
          address={Providence, RI},
          date={1966},
        },
       pages={143\ndash 148},
}


\bib{EskinMcMullen}{article}{
      author={Eskin, Alex},
      author={McMullen, Curt},
       title={Mixing, counting, and equidistribution in {Lie} groups},
        date={1993},
     journal={Duke Math. J.},
      volume={71},
      number={1},
       pages={181\ndash 209},
         doi={10.1215/S0012-7094-93-07108-6},
}

\bib{NakamuraWatanabe}{article}{
      author={Nakamura, Y.},
      author={Watanabe, T.},
       title={The normalization constant of a certain invariant measure on
              $\mathrm{GL}_n(D_{\A})$},
        date={2004},
     journal={Manuscripta Math.},
      volume={115},
      number={3},
       pages={259\ndash 280},
         doi={10.1007/s00229-004-0503-8},
}

\bib{McFeat}{article}{
      author={McFeat, R.~B.},
       title={Geometry of numbers in adele spaces},
        date={1971},
     journal={Dissertationes Math. (Rozprawy Mat.)},
      volume={88},
       pages={1\ndash 49},
}

\bib{Henk2002}{article}{
      author={Henk, Martin},
       title={Successive minima and lattice points},
        date={2002},
     journal={Rend. Circ. Mat. Palermo (2) Suppl.},
      number={70, part I},
       pages={377\ndash 384},
}

\bib{Weil}{book}{
      author={Weil, Andr{\'e}},
       title={Adeles and algebraic groups},
      series={Progress in Mathematics},
      volume={23},
   publisher={Birkh{\"a}user},
        date={1982},
}

\end{biblist}
\end{bibdiv}
\end{document}